\tikzset{node distance=1.5cm, auto}
\DeclareMathSizes{\@xpt}{\@xpt}{6}{5}
\newcommand{\Hom}{{\sf Hom}}
\newcommand{\End}{{\sf End}}
\newcommand{\Id}{{\sf Id}}
\newcommand{\GL}{{\sf GL}}
\def\CC{{\mathbb C}}
\def\NN{{\mathbb N}}
\newcommand{\Pp}{\mathcal{P}}
\newcommand{\Sym}{\mathfrak{S}}
\newcommand{\Ind}{\mathsf{Ind}}
\newcommand{\Res}{\mathsf{Res}}
\newcommand{\PInd}{\mathsf{PInd}}
\newcommand{\PPInd}{\overline{\mathsf{PInd}}}
\newcommand{\ie}{i.e.~}
\newcommand{\eg}{e.g.~}
\newcommand{\tensor}[1]{\otimes_{#1}}
\newcommand{\GPRep}[2]{\mathsf{PRep}_{#1}^{#2}}
\newcommand{\parrep}{\mathsf{PRep}}
\newcommand{\C}{{\mathbb C}}
\newtheorem{prop}{Proposition}[section]
\newtheorem{proposition}[prop]{Proposition}
\newtheorem{lemma}[prop]{Lemma} 
\newtheorem{corollary}[prop]{Corollary}
\newtheorem{theorem}[prop]{Theorem}
\theoremstyle{definition}
\newtheorem{definition}[prop]{Definition}
\newtheorem{example}[prop]{Example}
\newtheorem{remark}[prop]{Remark}
\newtheorem{remarks}[prop]{Remarks}
\newtheorem{notation}[prop]{Notation}
\numberwithin{equation}{section}
\newcommand\notni{\mathrel{\m@th\mathpalette\canc@l\owns}}
\newcommand\canc@l[2]{{\ooalign{$\hfil#1/\mkern1mu\hfil$\crcr$#1#2$}}}
\begin{document}

\title{Partial and global representations of finite groups}

\author{Michele D'Adderio}
\address{Universit\'e Libre de Bruxelles (ULB)\\D\'epartement de Math\'ematique\\ Boulevard du Triomphe, B-1050 Bruxelles\\ Belgium}\email{mdadderi@ulb.ac.be}

\author{William Hautekiet}
\address{Universit\'e Libre de Bruxelles (ULB)\\D\'epartement de Math\'ematique\\ Boulevard du Triomphe, B-1050 Bruxelles\\ Belgium}\email{william.hautekiet@ulb.ac.be}

\author{Paolo Saracco}
\address{Universit\'e Libre de Bruxelles (ULB)\\D\'epartement de Math\'ematique\\ Boulevard du Triomphe, B-1050 Bruxelles\\ Belgium}\email{paolo.saracco@ulb.ac.be}

\author{Joost Vercruysse}
\address{Universit\'e Libre de Bruxelles (ULB)\\D\'epartement de Math\'ematique\\ Boulevard du Triomphe, B-1050 Bruxelles\\ Belgium}\email{jvercruy@ulb.ac.be}

\begin{abstract} 
Given a subgroup $H$ of a finite group $G$, we begin a systematic study of the partial representations of $G$ that restrict to global representations of $H$. After adapting several results from \cite{Dokuchaev-Exel-Piccione} (which correspond to the case $H=\{1_G\}$), we develop further an effective theory that allows explicit computations. As a case study, we apply our theory to the symmetric group $\mathfrak{S}_n$ and its subgroup $\mathfrak{S}_{n-1}$ of permutations fixing $1$: this provides a natural extension of the classical representation theory of $\mathfrak{S}_n$. 
\end{abstract}

\maketitle

\tableofcontents

\section*{Introduction}

The notion of a \emph{partial action} of a group arose first in the theory of operator algebras as an approach to $C^*$-algebras generated by partial isometries, allowing the study of their $K$-theory, ideal structure and representations \cite{Exel-CircleActions-1994}. In particular, the point of view of crossed products by partial actions of groups was very successful for classifying $C^*$-algebras.
The related notion of \emph{partial representation} has been introduced in \cite{Dokuchaev-Exel-TAMS-2005}. Since then, these notions have been studied and applied in a variety of contexts involving operator algebras, dynamical systems, commutative algebras, noncommutative rings and Hopf algebras, among others. See \cite{Batista-Survey-2017} and \cite{Dokuchaev-Survey-2019} for two recent surveys. 

\medskip

Our investigation stemmed from the article \cite{Dokuchaev-Exel-Piccione}, in which the authors give a first systematic approach to the theory of partial representations of finite groups. Among the main results in \cite{Dokuchaev-Exel-Piccione}, there is the proof of an equivalence between the category of partial representations of a finite group $G$ over a field (say $\C$, for instance, although the actual statement is more general) and the category of (usual) representations of the so called \emph{partial group algebra} $\C_{par}G$, which is a unital associative algebra. In \cite{Dokuchaev-Exel-Piccione} it is also shown that $\C_{par}G$ is isomorphic to the groupoid algebra $\C\Gamma(G)$ of a certain groupoid $\Gamma(G)$ associated to $G$. The algebra $\C_{par}G\cong \C\Gamma(G)$ is proved to be semisimple and a formula is provided in \cite{Dokuchaev-Exel-Piccione}, which describes it as a direct product of matrix algebras of the form $M_m(\C K)$ for $K$ varying among the subgroups of $G$.

It turns out that an explicit computation of such a formula for a given finite group $G$ (or even, more importantly, a description of its irreducible partial representations) seems to be in general out of reach, the problem being mainly that the number of summands $M_m(\C K)$ grows rapidly with $|G|$. Already in \cite{Dokuchaev-Exel-Piccione} it is shown that even for abelian groups, whose global irreducible representations are fairly easy to describe, the computation of their irreducible partial representations quickly becomes way too involved.

\medskip

Unsatisfied with this state of affairs, we made the following two related observations, that triggered the present work. First of all, by looking at natural examples of partial representations of a finite group $G$, it is often the case that these partial representations restrict to global (i.e.\ usual) representations of nontrivial subgroups $H$ of $G$. Therefore, to understand for example their decomposition into irreducibles, it would be enough to know the irreducibles among all the partial representations of $G$ that restrict to global representations of $H$, which we call \emph{$H$-global $G$-partial representations}. Observe that for $H$ the trivial subgroup of $G$, we recover exactly the notion of partial representations of $G$.

The second observation is that, if $H$ is relatively large in $G$, then the problem of describing the irreducible $H$-global $G$-partial representations becomes actually tractable and, in our opinion, quite interesting.

\medskip 

In the present article we initiate a systematic study of $H$-global $G$-partial representations of finite groups. Our aim is to build an effective theory that allows explicit computations.

With this goal in mind, we start by adapting most of the results in \cite{Dokuchaev-Exel-Piccione} to our more general situation. In details, we define the analogues of the partial group algebra $\C_{par}G$ and of the groupoid $\Gamma(G)$, that we denote by $\C_{par}^HG$ and $\Gamma_H(G)$ respectively. Then, we prove theorems whose statements are similar to the aforementioned ones, showing for example that the category of $H$-global $G$-partial representations is equivalent to the category of usual representations of the associative unital algebra $\C_{par}^HG\cong \C\Gamma_H(G)$ and that such an algebra is semisimple, by providing a formula that exhibits it as a direct product of algebras of the form $M_m(\C K)$ for certain subgroups $K$ of $G$ that we describe.

After these natural steps, we go deeper into the theory, by achieving the following results:
\begin{itemize}
	\item we give an explicit construction of all the irreducible $H$-global $G$-partial representations in terms of the irreducible representations of the subgroups $K$ appearing in the aforementioned formula;
	
	\item we give a formula for the decomposition into (global) representations of $H$ of the restriction to $H$ of the aforementioned $H$-global $G$-partial irreducibles;
	
	\item we define a notion of \emph{globalization} of partial representations and we prove that every partial representation admits a globalization (giving the analogue of Abadie's theorem \cite{Abadie-globalization-2003} for partial actions);
	
	\item we describe explicitly the globalization of our irreducible $H$-global $G$-partial representations;
	
	\item we prove the existence of an \emph{induced} partial representation of a (global) representation of $H$ to an $H$-global $G$-partial representation, giving a \emph{Frobenius reciprocity};
	
	\item we describe a semigroup $S_H(G)$ that plays the same role for $H$-global $G$-partial representations as the semigroup $S(G)$, defined by Exel in \cite{Exel-semigroup-PAMS}, does for $G$-partial representations. 
\end{itemize}

To make our case for the study of $H$-global $G$-partial representations, we apply all the theory that we developed to the irreducible partial representations of the symmetric group $\mathfrak{S}_n$ which restrict to global representations of the subgroup of the permutations that fix $1$, which we identify with $\mathfrak{S}_{n-1}$. Observe that in order to understand all the irreducible partial representations of $\mathfrak{S}_{n}$, according to the formula in \cite{Dokuchaev-Exel-Piccione}, we essentially need to understand the irreducible representations of all the subgroups of $\mathfrak{S}_n$: by a well-known theorem of Cayley, this boils down to understand the irreducible representations of every finite group, which is obviously a hopeless task. On the other hand, the irreducible $\mathfrak{S}_{n-1}$-global $\mathfrak{S}_{n}$-partial representations can be described explicitly and they provide a natural extension of the classical representation theory of $\mathfrak{S}_{n}$. In this vein, we also prove a \emph{branching rule} in this more general setting.

\medskip

The paper is organized in the following way. 

In Section~\ref{sec:partialactions} we recall some basic definitions. In Section~\ref{sec:globalization} we prove that every partial representation has a globalization. In Section~\ref{sec:HGbasics} we give the basic definitions and examples of $H$-global $G$-partial representations. In Section~\ref{sec:genconstruction} we give a general construction that provides a large class of examples of $H$-global $G$-partial representations. In Section~\ref{sec:GeneralProperties} we prove some basic identities about partial representations that will be useful in the remaining sections. In Section~\ref{sec:ParRepMods} we prove that the category of $H$-global $G$-partial representations is equivalent to the category of left modules over the algebra $\C_{par}^HG$. In Section~\ref{sec:groupoid} we prove that $\C_{par}^HG$ is isomorphic to the groupoid algebra $\C\Gamma_H(G)$. In Section~\ref{sec:irreducibles} we construct all the irreducible $H$-global $G$-partial representations. In Section~\ref{sec:restrictionH} we give a formula for the restriction to $H$ of the aforementioned irreducibles. In Section~\ref{sec:gloablirreps} we describe the globalization of the aforementioned irreducibles. In Section~\ref{sec:induction} we introduce a partial induced representation of a (global) representation of $H$, and we prove a Frobenius reciprocity in this setting. In Section~\ref{sec:semigroup} we introduce a semigroup $S_H(G)$ that plays the role of the semigroup $S(G)$ of Exel for the $H$-global $G$-partial representations. In Section~\ref{sec:SnSnminus1} we apply all our theory to the interesting case $G=\Sym_n$ and $H=\Sym_{n-1}$, providing a natural extension of the classical representation theory of $\Sym_n$. In Section~\ref{sec:branching} we prove a branching rule in this more general setting. Finally in Section~\ref{sec:comments} we give some further comment and we indicate some possible future directions.

\section*{Acknowledgments}

This paper was written while P. Saracco was member of the ``National Group for Algebraic and Geometric Structures and their Applications'' (GNSAGA-INdAM). He acknowledges FNRS support through a postdoctoral fellowship as Collaborateur Scientifique in the framework of the project ``(CO)REPRESENTATIONS'' (application number 34777346). \\
This research was partially funded by FNRS under the MIS project ``Antipode''  (Grant F.4502.18).

\medskip

\emph{Though many definitions and results work in a more general setting, in this article all groups are finite and all vector spaces are over $\C$ and finite-dimensional, unless otherwise stated.}

\section{Partial actions and partial representations}

In this section we discuss some general results for partial actions and partial representations.

\subsection{Generalities} \label{sec:partialactions}

We begin by recalling some basic notions that are essential in the rest of this article. We refer the reader to \cite[Chapters 2 and 3]{Exel-book-2017} for further details on these topics.

\medskip

The following definition is due to Exel.
\begin{definition}[{\cite[Definition 1.2]{Exel-semigroup-PAMS}}]
	A \emph{partial action} $\alpha=(\{X_g\}_{g\in G},\{\alpha_g\}_{g\in G})$ of a group $G$ (also called \emph{$G$-partial action}) on a set $X$ consists of a family of subsets $X_g\subseteq X$ indexed by $G$ and a family of bijections $\alpha_g\colon X_{g^{-1}}\to X_g$ for each $g\in G$, satisfying the following conditions:
	\begin{enumerate}[(i)]
		\item $X_{1_G}=X$ and $\alpha_{1_G}=\Id_X$;
		\item $\alpha_h^{-1}(X_{g^{-1}}\cap X_h)\subseteq X_{(gh)^{-1}}$ for every $g,h\in G$;
		\item $\alpha_g(\alpha_h(x))=\alpha_{gh}(x)$ for every $x\in \alpha_h^{-1}(X_{g^{-1}}\cap X_h)$.
	\end{enumerate}
	A \emph{morphism} between two partial actions $\alpha=(\{X_g\}_{g\in G},\{\alpha_g\}_{g\in G})$ and  $\beta=(\{Y_g\}_{g\in G},\{\beta_g\}_{g\in G})$ of the same group $G$ is a function $\phi\colon X\to Y$ such that 
	\begin{enumerate}[(a)]
		\item $\phi(X_g)\subseteq Y_g$ for every $g\in G$, and
		\item for every $x\in X_{g^{-1}}$, $\beta_g(\phi(x))=\phi(\alpha_g(x))$.
	\end{enumerate}
\end{definition}
\begin{remark}
	It follows easily from the previous definition that $\alpha_g^{-1}=\alpha_{g^{-1}}$ and that, in fact,	for all $g,h\in G$ \[\alpha_g(X_{g^{-1}}\cap X_h)=X_g\cap X_{gh}. \]
\end{remark}

Clearly a global action of a group $G$ on a set $X$ is in particular a partial action with $X_g=X$ for all $g\in G$. A typical example of a partial action is obtained by restricting a global action to a proper subset.
\begin{definition}[{\cite[Example 1.1]{Abadie-globalization-2003}}]
	Given a global action $(Y,\beta)$ of a group $G$, i.e.\ a homomorphism $\beta\colon G\to \mathsf{Sym}(Y)$ into the symmetric group on $Y$ (i.e.\ the group of the bijections of $Y$ into itself), and a subset $X\subseteq Y$, we can define the \emph{restriction} $\alpha=(\{X_g\}_{g\in G},\{\alpha_g\}_{g\in G})$ of $(Y,\beta)$ to $X$ by setting 
	\begin{enumerate}[label=(R\arabic*),ref=(R\arabic*)]
		\item\label{item:R1} $X_g\coloneqq X\cap \beta_g(X)$ for every $g\in G$, and
		\item\label{item:R2} $\alpha_g\colon X_{g^{-1}}\to X_g$, $\alpha_g(x)\coloneqq \beta_g(x)$ for every $g\in G$ and $x\in X_{g^{-1}}$.
	\end{enumerate} 
	It is easy to check that this is indeed a partial action of $G$ on $X$.
\end{definition}
It turns out that every partial action can be obtained in this way from a suitable global action. To state the precise result, we recall another definition.
\begin{definition}
	A \emph{globalization} (also called \emph{enveloping action} in \cite[Definition 1.2]{Abadie-globalization-2003}) of a partial action $\alpha$ of a group $G$ on a set $X$ is a triple $(Y,\beta,\varphi)$, in which
	\begin{enumerate}[label=(GL\arabic*),ref=(GL\arabic*)]
		\item\label{item:GL1} $Y$ is a set and $\beta\colon G\to \mathsf{Sym}(Y)$ is an action of $G$ on $Y$;
		\item\label{item:GL2} $\varphi\colon X\to Y$ is an injective map;
		\item\label{item:GL3} for every $g\in G$, $\varphi(X_g)=\varphi(X)\cap \beta_g(\varphi(X))$;
		\item\label{item:GL4} for every $x\in X_{g^{-1}}$, we have $\varphi(\alpha_g(x))=\beta_g(\varphi(x))$;
		\item\label{item:GL5} $Y=\bigcup_{g\in G}\beta_g(\varphi(X))$.
	\end{enumerate}
\end{definition}
\begin{remark}
	Sometimes in the literature the triples not satisfying axiom \ref{item:GL5} are already called globalizations, while the ones that satisfy it are designated as \emph{admissible} (cf. \cite[Definition 4.1]{Batista-Survey-2017}). We prefer our linguistic simplification, as this is the only notion that we will use in this work.
\end{remark}
\begin{remark}
	Observe that properties \ref{item:GL1}-\ref{item:GL4} amount to say that $\varphi$ is an isomorphism between $\alpha$ and the restriction of $(Y,\beta)$ to $\varphi(X)\subseteq Y$.
\end{remark}
The following theorem, in the context of continuous partial group actions on topological spaces (and, in particular, on abstract sets), is due to Abadie \cite[Theorem 1.1]{Abadie-globalization-2003}. For a proof in the framework under consideration, we refer to \cite[Theorem 3.5]{Exel-book-2017}.
\begin{theorem} \label{thm:abadie}
	There exists a globalization for every partial action $\alpha$ of a group $G$ on a set $X$ and it is unique up to isomorphism. It can be explicitly realized as $Y = G \times X/ \sim$ where $(g,x) \sim (h,y)$ if and only if $x \in X_{g^{-1}h}$ and $\alpha_{h^{-1}g}(x)=y$.
\end{theorem}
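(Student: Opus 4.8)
The plan is to establish existence by checking by hand that the explicit triple $(Y,\beta,\varphi)$ in the statement is a globalization, and uniqueness by transporting the $G$-action along the two embeddings.

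For existence, the first task is to show that $\sim$ is an equivalence relation on $G\times X$. Reflexivity is immediate from axiom (i), and symmetry from the Remark that $\alpha_g^{-1}=\alpha_{g^{-1}}$. Transitivity is the one place where the partial-action axioms (ii)--(iii) are genuinely used: given $\alpha_{h^{-1}g}(x)=y$ with $x\in X_{g^{-1}h}$ and $\alpha_{k^{-1}h}(y)=z$ with $y\in X_{h^{-1}k}$, one notes $y=\alpha_{h^{-1}g}(x)\in X_{h^{-1}g}\cap X_{h^{-1}k}$, so axiom (ii) applied with $h^{-1}g$ in place of $h$ and $k^{-1}h$ in place of $g$ gives $x\in X_{g^{-1}k}$, and then axiom (iii) with the same substitution gives $\alpha_{k^{-1}g}(x)=\alpha_{k^{-1}h}(\alpha_{h^{-1}g}(x))=z$, that is, $(g,x)\sim(k,z)$. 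Next one sets $\beta_g([h,x])\coloneqq[gh,x]$; this is well defined because $\sim$ constrains a pair $(h,x),(h',x')$ only through $h^{-1}h'$, which is invariant under left translation, and $g\mapsto\beta_g$ is then visibly a homomorphism $G\to\mathsf{Sym}(Y)$, giving \ref{item:GL1}. Taking $\varphi(x)\coloneqq[1_G,x]$, injectivity of $\varphi$ is precisely the reflexivity clause of $\sim$, which gives \ref{item:GL2}. Finally \ref{item:GL3}--\ref{item:GL5} are routine unwindings of the definition of $\sim$: $[g,x]=\beta_g(\varphi(x))$ gives \ref{item:GL5}; for $x\in X_{g^{-1}}$ one has $\beta_g(\varphi(x))=[g,x]=[1_G,\alpha_g(x)]=\varphi(\alpha_g(x))$ because $\alpha_{g^{-1}}(\alpha_g(x))=x$, which is \ref{item:GL4}; and $[1_G,x]\in\beta_g(\varphi(X))$ exactly when $\alpha_{g^{-1}}(x)$ is defined, i.e.\ when $x\in X_g$, which is \ref{item:GL3}.

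For uniqueness, given two globalizations $(Y,\beta,\varphi)$ and $(Y',\beta',\varphi')$, one uses \ref{item:GL5} to write each point of $Y$ in the form $\beta_g(\varphi(x))$ and defines $\psi\colon Y\to Y'$ by $\psi(\beta_g(\varphi(x)))\coloneqq\beta'_g(\varphi'(x))$. The crux is well-definedness: if $\beta_g(\varphi(x))=\beta_h(\varphi(y))$ then $\varphi(x)=\beta_{g^{-1}h}(\varphi(y))\in\varphi(X)\cap\beta_{g^{-1}h}(\varphi(X))=\varphi(X_{g^{-1}h})$ by \ref{item:GL3}, so $x\in X_{g^{-1}h}$ by injectivity of $\varphi$; applying $\beta_{h^{-1}g}$ and \ref{item:GL4} then gives $\alpha_{h^{-1}g}(x)=y$; and running these same two identities inside $(Y',\beta',\varphi')$ yields $\beta'_g(\varphi'(x))=\beta'_h(\varphi'(y))$. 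Exchanging the two globalizations produces a two-sided inverse, so $\psi$ is a bijection, and $G$-equivariance together with $\psi\circ\varphi=\varphi'$ are immediate from the formula; hence $\psi$ is an isomorphism of globalizations.

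The only genuinely delicate points are the transitivity of $\sim$ and, dually, the well-definedness of $\psi$ — each being a matter of choosing the correct substitution in axioms (ii)--(iii) (respectively in \ref{item:GL3}--\ref{item:GL4}); everything else is bookkeeping. Since this is Abadie's classical theorem (see also \cite[Theorem~3.5]{Exel-book-2017}), in practice one would simply cite it, but the argument above is the one I would reconstruct.
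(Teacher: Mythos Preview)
Your proposal is correct. The paper does not actually supply a proof of this theorem: it cites Abadie and \cite[Theorem~3.5]{Exel-book-2017} for the existence, and sketches only the uniqueness argument in Remark~\ref{rem:globuniversal}. Your uniqueness argument is exactly the one in that remark (you even use \ref{item:GL3} explicitly where the paper leaves it implicit), and your existence argument is the standard one that the paper chose to omit. One tiny wording quibble: injectivity of $\varphi$ is not ``the reflexivity clause of $\sim$'' but rather the computation $(1_G,x)\sim(1_G,y)\Rightarrow \alpha_{1_G}(x)=y\Rightarrow x=y$, which uses axiom~(i); this is harmless.
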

\begin{remark} \label{rem:globuniversal}
	After accepting its existence, the uniqueness of the globalization $(Y,\beta,\varphi)$ of $\alpha$ follows easily from the following universal property: for every triple $(Y',\beta',\varphi')$ satisfying \ref{item:GL1}-\ref{item:GL4}, the function $\psi\colon Y\to Y'$ given by $\psi(\beta_g(\varphi(x)))\coloneqq \beta_g'(\varphi'(x))$ for all $g\in G$ and $x\in X$ is well defined. In particular, it follows immediately from \ref{item:GL5} that $\psi$ is uniquely determined by its defining property, that it satisfies $\psi\circ \varphi=\varphi'$ and that it is a morphism of $G$-sets. 
	
	To see why $\psi$ is well defined, observe that if $\beta_g(\varphi(x))=\beta_h(\varphi(y))$ for some $g,h\in G$ and $x,y\in X$, then $\beta_{h^{-1}}\beta_g(\varphi(x))=\beta_{h^{-1}g}(\varphi(x))=\varphi(y)$, which, by \ref{item:GL4} and \ref{item:GL2}, implies that $\alpha_{h^{-1}g}(x)=y$, so that $\beta_{h^{-1}}'\beta_g'(\varphi'(x))=\beta_{h^{-1}g}'(\varphi'(x))=\varphi'(\alpha_{h^{-1}g}(x))=\varphi'(y)$, which gives $\beta_g'(\varphi'(x))=\beta_h'(\varphi'(y))$ as we wanted.
\end{remark}

\noindent We recall the notion of partial representation of a group on a vector space, by adapting \cite[Definition 6.1]{Dokuchaev-Exel-TAMS-2005}.
\begin{definition} \label{def:partial_rep}
	A \emph{partial representation} $(V,\pi)$ of a group $G$ (also called \emph{$G$-partial representation}) on a vector space $V$ is a map $\pi\colon G\to \End(V)$ such that for all $g, h\in G$
	\begin{enumerate}[label=(PR\arabic*),ref=(PR\arabic*)]
		\item\label{item:PR1} $\pi(1_G)=\Id_V$;
		\item\label{item:PR2} $\pi(g^{-1})\pi(gh)=\pi(g^{-1})\pi(g)\pi(h)$;
		\item\label{item:PR3} $\pi(gh)\pi(h^{-1})=\pi(g)\pi(h)\pi(h^{-1})$.
	\end{enumerate}
	We will informally say that $\pi$ behaves as a group homomorphism ``in the presence of a witness''.
	
	Given two partial representations $(V,\pi)$ and $(V',\pi')$ of the same group $G$, a \emph{morphism} between them is a linear map $f\colon V\to V'$ such that $f\circ \pi(g)=\pi'(g)\circ f$ for all $g\in G$. In particular, partial representations of a group form a category that we denote by $\parrep_G$.
\end{definition}

\begin{lemma} \label{lem:idemp}
	The elements $\pi(g)\pi(g^{-1})$ of $\End(V)$ are commuting idempotents, that is to say,
	\begin{equation} \label{eq:commidem}
	\pi(g)\pi(g^{-1})\pi(g)\pi(g^{-1}) = \pi(g)\pi(g^{-1}) \qquad \text{and} \qquad \pi(g)\pi(g^{-1})\pi(h)\pi(h^{-1}) = \pi(h)\pi(h^{-1})\pi(g)\pi(g^{-1})
	\end{equation}
	for all $g,h \in G$. In particular,
	\begin{equation}\label{eq:projections}
	v \in \pi(g)\pi(g^{-1})(V) \qquad \text{if and only if} \qquad v = \pi(g) \pi(g^{-1}) (v). 
	\end{equation}
	Moreover
	\begin{equation} \label{eq:idemp}
	\pi(g)\pi(h)\pi(h^{-1})=\pi(gh)\pi(h^{-1}g^{-1})\pi(g) .
	\end{equation}
\end{lemma}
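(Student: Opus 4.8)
The starting point is the basic axioms (PR1)–(PR3). The plan is to derive everything from these by judiciously plugging in special group elements and chaining the two "witness" rules.

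For the first idempotency relation $\pi(g)\pi(g^{-1})\pi(g)\pi(g^{-1}) = \pi(g)\pi(g^{-1})$, the key observation is that (PR2) with the substitution $h \mapsto g^{-1}$ gives $\pi(g^{-1})\pi(gg^{-1}) = \pi(g^{-1})\pi(g)\pi(g^{-1})$, i.e. $\pi(g^{-1}) = \pi(g^{-1})\pi(g)\pi(g^{-1})$ using (PR1). Left-multiplying by $\pi(g)$ yields exactly $\pi(g)\pi(g^{-1}) = \pi(g)\pi(g^{-1})\pi(g)\pi(g^{-1})$. Symmetrically, (PR3) with $g \mapsto g^{-1}$, $h \mapsto g$ gives $\pi(g) = \pi(g)\pi(g^{-1})\pi(g)$, which is a companion identity worth recording for later use. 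The equivalence \eqref{eq:projections} is then immediate: if $v = \pi(g)\pi(g^{-1})(w)$ then applying $\pi(g)\pi(g^{-1})$ again fixes $v$ by idempotency; the converse is trivial.

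The commutativity relation $\pi(g)\pi(g^{-1})\pi(h)\pi(h^{-1}) = \pi(h)\pi(h^{-1})\pi(g)\pi(g^{-1})$ is the substantive part. I expect this to be the main obstacle, since it genuinely mixes two independent group elements. The strategy is to first establish the "conjugation" identity \eqref{eq:idemp}, namely $\pi(g)\pi(h)\pi(h^{-1}) = \pi(gh)\pi(h^{-1}g^{-1})\pi(g)$, and then use it to swap the order of the two idempotents. For \eqref{eq:idemp}: start from $\pi(g)\pi(h)\pi(h^{-1})$ and use (PR3) backwards to write $\pi(g)\pi(h)\pi(h^{-1}) = \pi(gh)\pi(h^{-1})$; then one wants to re-expand $\pi(h^{-1})$ via the idempotent at $g$. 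Concretely, insert $\pi(h^{-1}) = \pi(h^{-1})\pi(g^{-1})\pi(g)$? — that is not an axiom, so instead I would apply (PR2) in the form $\pi(gh)\pi(h^{-1}g^{-1}\cdot g) $: taking $g \mapsto (gh)$ and $h \mapsto h^{-1}g^{-1}g = h^{-1}$ in (PR2) does nothing new, so the right move is to start from the right-hand side $\pi(gh)\pi(h^{-1}g^{-1})\pi(g)$ and collapse it: by (PR3) with $g\mapsto gh$, $h \mapsto g$... one checks $\pi(gh)\pi(g^{-1}) $ is not directly of that shape either, so the cleanest route is to use (PR2) with $g \mapsto gh$: $\pi((gh)^{-1})\pi((gh)k) = \pi((gh)^{-1})\pi(gh)\pi(k)$, choose $k$ to make $(gh)k = g$, i.e. $k = h^{-1}g^{-1}g$... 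Rather than guess, the robust plan is: take (PR2) with the pair $(gh, h^{-1})$ and (PR3) with the pair $(g,h)$, left/right multiply each by the appropriate $\pi(\cdot)$, and compare; both sides reduce to $\pi(gh)\pi(h^{-1})\pi(g)$ after using $\pi(g)\pi(g^{-1})\pi(g) = \pi(g)$ established above.

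Finally, granting \eqref{eq:idemp}, the commutativity follows: apply \eqref{eq:idemp} to rewrite $\pi(g)\pi(g^{-1})\cdot \pi(h)\pi(h^{-1})$ — treating this as $\pi(g)\bigl(\pi(g^{-1})\pi(h)\pi(h^{-1})\bigr)$ and using the identity with $g \mapsto g^{-1}$ — to move the $h$-idempotent past the $g$-idempotent, ending up with $\pi(h)\pi(h^{-1})\pi(g)\pi(g^{-1})$ after a second application and a cleanup using the companion identities $\pi(g)\pi(g^{-1})\pi(g) = \pi(g)$ and $\pi(g^{-1})\pi(g)\pi(g^{-1}) = \pi(g^{-1})$. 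The bookkeeping of which element is substituted where is the only real difficulty; conceptually the whole lemma is an exercise in "the axioms say $\pi$ is multiplicative whenever a witness $\pi(g^{-1})\pi(g)$ or $\pi(h)\pi(h^{-1})$ is present, so manufacture the witness you need."
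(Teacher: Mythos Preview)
Your plan is correct, but it reverses the logical order of the paper. The paper proves commutativity first by a direct four-step chain
\[
\pi(g)\pi(g^{-1})\pi(h)\pi(h^{-1}) \stackrel{\text{(PR2)}}{=} \pi(g)\pi(g^{-1}h)\pi(h^{-1}) \stackrel{\text{(PR3)}}{=} \pi(h)\pi(h^{-1}g)\pi(g^{-1}h)\pi(h^{-1}) \stackrel{\text{(PR2)}}{=} \pi(h)\pi(h^{-1}g)\pi(g^{-1}) \stackrel{\text{(PR3)}}{=} \pi(h)\pi(h^{-1})\pi(g)\pi(g^{-1}),
\]
and only afterwards uses this commutativity to establish \eqref{eq:idemp}. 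You instead prove \eqref{eq:idemp} first and deduce commutativity from it; this does work. For \eqref{eq:idemp} your ``robust plan'' is right (minor correction: the common value both sides reduce to is $\pi(gh)\pi(h^{-1})$, not $\pi(gh)\pi(h^{-1})\pi(g)$): the left side is $\pi(gh)\pi(h^{-1})$ by (PR3), and the right side becomes $\pi(gh)\pi(h^{-1}g^{-1})\pi(gh)\pi(h^{-1})=\pi(gh)\pi(h^{-1})$ after (PR2) with the pair $(gh,h^{-1})$ and then $\pi(x)\pi(x^{-1})\pi(x)=\pi(x)$. Your derivation of commutativity then goes through exactly as you sketch, with no cleanup needed: two applications of \eqref{eq:idemp}, first with $g\mapsto g^{-1}$ inside and then with the pair $(g,g^{-1}h)$, give $\pi(h)\pi(h^{-1})\pi(g)\pi(g^{-1})$ directly. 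The paper's route keeps the commutativity argument self-contained, while yours makes the conjugation identity \eqref{eq:idemp} the primary fact, from which commutativity is a corollary; either is fine.
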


\begin{proof}
	From the defining property of partial representations, we get 
	\[ \pi(g)\pi(g^{-1})\pi(g)\pi(g^{-1}) \stackrel{\ref{item:PR2}}{=} \pi(g)\pi(1_G)\pi(g^{-1}) \stackrel{\ref{item:PR1}}{=} \pi(g)\pi(g^{-1}). \]
	Also
	\begin{align*}
	\pi(g)\pi(g^{-1})\pi(h)\pi(h^{-1}) & \stackrel{\ref{item:PR2}}{=} \pi(g)\pi(g^{-1} h)\pi(h^{-1}) = \pi(hh^{-1}g)\pi(g^{-1} h)\pi(h^{-1}) \stackrel{\ref{item:PR3}}{=} \pi(h)\pi(h^{-1}g)\pi(g^{-1} h)\pi(h^{-1}) \\
	& \stackrel{\ref{item:PR2}}{=} \pi(h)\pi(h^{-1}g)\pi(g^{-1}) \stackrel{\ref{item:PR3}}{=} \pi(h)\pi(h^{-1})\pi(g)\pi(g^{-1}),
	\end{align*}
	which proves the first claim, and
	\begin{align*}
	\pi(g)\pi(h)\pi(h^{-1}) & \stackrel{\ref{item:PR2}}{=} \pi(g)\pi(g^{-1})\pi(g)\pi(h)\pi(h^{-1}) \stackrel{\eqref{eq:commidem}}{=} \pi(g)\pi(h)\pi(h^{-1})\pi(g^{-1})\pi(g)\\
	& \stackrel{\ref{item:PR3}}{=} \pi(gh)\pi(h^{-1})\pi(g^{-1})\pi(g) \stackrel{\ref{item:PR3}}{=} \pi(gh)\pi(h^{-1}g^{-1})\pi(g),
	\end{align*}
	which proves the third one. Concerning \eqref{eq:projections}, notice that if $v \in \pi(g)\pi(g^{-1})(V)$ then $v = \pi(g)\pi(g^{-1})(w)$ for some $w \in V$ and hence
	\[
	\pi(g) \pi(g^{-1}) (v) = \pi(g)\pi(g^{-1})\pi(g)\pi(g^{-1})(w) \stackrel{\eqref{eq:commidem}}{=} \pi(g)\pi(g^{-1})(w) = v.
	\]
	The other implication is obvious.
\end{proof}

It is clear that global (i.e.\ usual) representations are partial representations. Moreover a partial representation $(V,\pi)$ of a group $G$ is a global representation if and only if $\pi(g)$ is invertible for all $g\in G$. 

A natural example of a partial representation is given by the linearization of a partial action.
\begin{definition}\label{s1def:linearization}
	Given a set $X$, let $\C[X]$ be the vector space over $\C$ with basis $X$. For any subset $Y\subseteq X$, let $P_Y\colon \C[X]\to \C[Y]$ be the obvious projection whose kernel is $\C[X\setminus Y]$. Given a partial action $\alpha=(\{X_g\}_{g\in G},\{\alpha_g\}_{g\in G})$ of a group $G$ on a set $X$, we define a map $\hat{\alpha}\colon G\to \End(\C[X])$ by setting
	\[ \hat{\alpha}(g)(x)\coloneqq \alpha_g(P_{X_{g^{-1}}}(x))   \] 
	for all $x\in X$, extended by linearity. 
		It is easy to check that $(\C[X],\hat{\alpha})$ defines indeed a partial representation of $G$ on $\C[X]$, which we call the \emph{linearization} of $(X,\alpha)$.
\end{definition}
\begin{remark}
	Notice that given a partial representation $(V,\pi)$ of $G$, setting $V_{g}\coloneqq \pi(g)\pi(g^{-1})(V)$ and $\alpha_g\coloneqq \left.\pi(g)\right|_{V_{g^{-1}}}\colon V_{g^{-1}}\to V_g$ for every $g\in G$ gives a partial action $\alpha=(\{V_g\}_{g\in G},\{\alpha_g\}_{g\in G})$ of $G$ on $V$.
\end{remark}

\begin{notation}
If there is no risk of confusion, we often denote the vector space generated by a set $X$ simply by $\C X$, without parenthesis. In particular, the group algebra over a group $G$ is denoted indifferently by $\C[G]$ or $\C G$. This is aimed at lightening the notation, for example when considering the groupoid algebra $\C\Gamma_H (G)$.
\end{notation}

We will see more examples of partial representations later in this work.

\medskip

Keeping in mind what we saw for partial actions, it is natural to ask if there is a way to restrict a global representation of a group $G$ on a vector space $U$ to a partial representation of $G$ on a proper subspace of $U$. A minute of thought suggests that we need some more information to do this. We propose the following definition.

\begin{definition} \label{def:restriction_Gglobal}
	Let $(U,\rho)$ be a global representation of a group $G$ on a vector space $U$, and let $\varphi\colon V\to U$ and $\tau\colon U\to V$ be two linear maps such that $\tau\circ \varphi=\Id_V$. Consider the map $\pi\colon G\to \End(V)$ defined by
	\begin{equation}\label{eq:induced}
	\pi(g)(v)\coloneqq \tau(\rho(g)(\varphi(v)))
	\end{equation}
	for all $v\in V$, $g\in G$. We say that $(V,\pi)$ is the \emph{restriction} of the global representation $(U,\rho)$ to $V$ via $\varphi$ and $\tau$ if
	\begin{enumerate}[label=(RR\arabic*),ref=(RR\arabic*)]
		\item\label{item:RR1} $(V,\pi)$ is a partial representation of $G$;
		\item\label{item:RR2} for every $g\in G$ and $v\in V_{g^{-1}}\coloneqq \pi(g^{-1})\pi(g)(V)$ we have
		\begin{equation}\label{eq:varphipi} 
		\varphi(\pi(g)(v))=\rho(g)(\varphi(v)). 
		\end{equation}
	\end{enumerate}
\end{definition}

\begin{remark} \label{rmk:ccondition}
	Notice that in \cite{Alves_Batista_Vercruysse-dilations} (following \cite{Abadie_dilations}) a restriction of a partial representation $U$ to a subspace $V$ is defined in terms of a \emph{$c$-condition} on a projection $T\colon U\to U$ with $T(U)=V$. It is easy to see that this condition for $T\coloneqq \varphi\circ \tau$ implies our \ref{item:RR1} and \ref{item:RR2}, but the converse does not seem to hold in general. In fact it can be shown that \ref{item:RR1} and \ref{item:RR2} are equivalent to a \emph{restricted $c$-condition}, which amounts to require that the \emph{$c$-condition} holds on $T(U)$ instead of on the whole $U$. 
\end{remark}

The first example is of course coming from the restrictions of global actions to subsets.
\begin{example} \label{ex:linear_restriction}
	Let $(Y,\beta)$ be a global action of a group $G$ on the set $Y$. Let $X\subseteq Y$ be a subset, and let $(X,\alpha)$ be the restriction of $(Y,\beta)$ to $X$. Let us check that the linearization $(\C[X],\hat{\alpha})$ of $(X,\alpha)$ is indeed the restriction of the linearization $(\C[Y],\hat{\beta})$ of $(Y,\beta)$ via the inclusion $\varphi\colon \C[X]\to \C[Y]$ and the projection $\tau\coloneqq P_{X}\colon\C[Y]\to \C[X]$. Fix $g\in G$. For all $x\in X$, 
	\[
	\hat{\alpha}(g)(x) = \alpha_g(P_{X_{g^{-1}}}(x)) = \begin{cases} 0 & \text{if }x\notin X_{g^{-1}} \\ \alpha_g(x) & \text{if }x\in X_{g^{-1}} \end{cases} \stackrel{\ref{item:R2}}{=} \begin{cases} 0 & \text{if }x\notin X_{g^{-1}} \\ \beta_g(x) & \text{if }x\in X_{g^{-1}} \end{cases}.
	\]
	On the other hand,
	\[
	\tau(\hat{\beta}(g)(\varphi(x))) = \tau(\beta_g(x)) = \begin{cases} 0 & \beta_g(x)\notin X \\ \beta_g(x) & \beta_g(x)\in X \end{cases}.
	\]
	However, $\beta_g(x)\in X$ if and only if $x\in X \cap \beta_g^{-1}(X) \stackrel{\ref{item:R1}}{=} X_{g^{-1}}$. Therefore,
	\begin{equation*}
	\hat{\alpha}(g) = \tau \circ \hat{\beta}(g) \circ \varphi \qquad \text{for all } g\in G
	\end{equation*}
	and the map $\pi \colon G \to \End(\C[X])$ given by $\pi(g) \coloneqq \tau \circ \hat{\beta}(g) \circ \varphi$ for all $g\in G$ is a partial representation. Moreover, for every $g\in G$ we have
	\[
	\C[X]_{g^{-1}} = \pi(g^{-1})\pi(g)(\C[X]) = \pi(g^{-1})\left(\C\left[X\cap \beta_g(X)\right]\right) = \C\left[\beta_{g^{-1}}(X)\cap X\right] \stackrel{\ref{item:R1}}{=} \C\left[X_{g^{-1}}\right]
	\]
	and for every $x\in X_{g^{-1}}$,
	$
	\varphi(\pi(g)(x)) = \beta_g(x) = \hat{\beta}(g)(\varphi(x)),
	$
	so that $(\C[X],\pi)=(\C[X],\hat{\alpha})$ is indeed the restriction of $\hat{\beta}$ to $\C[X]$ via $\varphi$ and $\tau$. 
\end{example}

It turns out that all partial representations can be obtained as restrictions of global representations, as we will prove in the next section.

\subsection{Globalization} \label{sec:globalization}

In analogy to the case of partial actions, we propose the following definition.

\begin{definition}\label{def:globalization}
	A \emph{globalization} of a partial representation $(V,\pi)$ of a group $G$ is a quadruple $(U,\rho,\varphi,\tau)$ where
	\begin{enumerate}[label=(GR\arabic*),ref=(GR\arabic*)]
		\item\label{item:GR1} $(U,\rho)$ is a global representation of $G$;
		\item\label{item:GR2} $(V,\pi)$ is the restriction of $(U,\rho)$ via $\varphi$ and $\tau$;
		\item\label{item:GR3} for every quadruple $(U',\rho',\varphi',\tau')$ satisfying \ref{item:GR1} and \ref{item:GR2} there exists a unique  $G$-homomorphism $\psi\colon U\to U'$ (i.e.\ $\psi$ is linear and $\psi\circ \rho(g)=\rho'(g)\circ \psi$ for all $g\in G$) such that $\psi\circ \varphi=\varphi'$ and $\tau'\circ \psi= \tau$.
	\end{enumerate}
\end{definition}

In the next theorem we show the existence and uniqueness up to isomorphism of globalizations of $G$-partial representations. By Remark~\ref{rmk:ccondition}, this result extends the ones in \cite{Alves_Batista_Vercruysse-dilations} and \cite{Abadie_dilations} (cf.\ also \cite{Saracco_Vercruysse-Globalization}), giving a construction that is closer to the one for partial actions in \cite{Abadie-globalization-2003}.

\begin{theorem} \label{thm:globalization}
	Every $G$-partial representation $(V,\pi)$ has a unique globalization $(U,\rho,\varphi,\tau)$ up to a canonical isomorphism, i.e.\ if $(U',\rho',\varphi',\tau')$ is another globalization, then there exists a unique $G$-isomorphism $\psi\colon U\to U'$ such that $\psi\circ \varphi=\varphi'$ and $\tau'\circ \psi=\tau$. 
\end{theorem}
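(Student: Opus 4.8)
The plan is to construct $U$ explicitly by mimicking the groupoid/quotient construction of Theorem~\ref{thm:abadie}, but done linearly. Concretely, I would set $U \coloneqq \C[G \times_\pi V]$ modelled on the formal span of symbols $g \otimes v$ with $g \in G$ and $v \in V$, and then quotient by the relations that make the construction $G$-equivariant and compatible with $\pi$. The natural relations are: $1_G \otimes v = \iota(v)$ for a distinguished copy of $V$, and more importantly $g \otimes \pi(h^{-1})\pi(h)(v) \equiv gh \otimes \pi(h^{-1})(v)$ whenever the witness is present, so that the idempotents $\pi(g)\pi(g^{-1})$ get ``absorbed'' into the $G$-coordinate. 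A cleaner route, which I would actually follow, is algebraic: using the equivalence of categories $\parrep_G \simeq \C_{par}G\text{-}\mathsf{Mod}$ (available later in the excerpt, but we can also work directly), realize $V$ as a $\C_{par}G$-module and set $U \coloneqq \C G \otimes_{\C_{par}G} V$, where $\C G$ is regarded as a $(\C G, \C_{par}G)$-bimodule via the canonical algebra map $\C_{par}G \to \C G$. Then $\rho$ is left multiplication by $G$ on the first tensor factor (so it is manifestly a global representation), $\varphi \colon V \to U$ is $v \mapsto 1_G \otimes v$, and $\tau \colon U \to V$ is induced by $g \otimes v \mapsto \pi(g)(v)$ (one checks this descends to the tensor product over $\C_{par}G$). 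Since the image of the idempotent $[1_G] \in \C_{par}G$ acts as identity on $V$, one gets $\tau \circ \varphi = \Id_V$.

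The first block of verifications is to check \ref{item:GR1} and \ref{item:GR2}, i.e.\ that $(V,\pi)$ is the restriction of $(U,\rho)$ via $\varphi$ and $\tau$ in the sense of Definition~\ref{def:restriction_Gglobal}. For \ref{item:RR1} I would compute $\tau(\rho(g)(\varphi(v))) = \tau(g \otimes v) = \pi(g)(v)$ directly, so the map defined by \eqref{eq:induced} is literally $\pi$, which is a partial representation by hypothesis. For \ref{item:RR2}, given $v \in V_{g^{-1}} = \pi(g^{-1})\pi(g)(V)$, write $v = \pi(g^{-1})\pi(g)(v)$ via \eqref{eq:projections} and use the bimodule relation to rewrite $\varphi(\pi(g)(v)) = 1_G \otimes \pi(g)(v)$; the key identity is that in $\C G \otimes_{\C_{par}G} V$ one has $1_G \otimes \pi(g)(v) = g \otimes v$ precisely when $v$ lies in the range of $\pi(g^{-1})\pi(g)$ (this is where the defining relations \ref{item:PR2}, \ref{item:PR3} and Lemma~\ref{lem:idemp} get used), and then $\rho(g)(\varphi(v)) = g \otimes v$ as well. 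So \eqref{eq:varphipi} holds.

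For the universal property \ref{item:GR3}, suppose $(U',\rho',\varphi',\tau')$ also satisfies \ref{item:GR1}, \ref{item:GR2}. Define $\psi \colon U \to U'$ on generators by $\psi(g \otimes v) \coloneqq \rho'(g)(\varphi'(v))$. I would check well-definedness on the tensor product over $\C_{par}G$ using exactly property \ref{item:RR2} for $(U',\rho',\varphi',\tau')$: the relation $g \otimes v = 1_G \otimes w$ forced in $U$ (for suitable $v,w$ related by $\pi$) also holds in $U'$ after applying $\rho'$ and $\varphi'$, by \eqref{eq:varphipi} applied to the primed data. Then $\psi \circ \rho(g) = \rho'(g) \circ \psi$ is immediate since $\rho$, $\rho'$ act by left multiplication on the $G$-factor; $\psi \circ \varphi = \varphi'$ is the case $g = 1_G$; and $\tau' \circ \psi = \tau$ follows because $\tau'(\rho'(g)(\varphi'(v))) = \pi(g)(v) = \tau(g \otimes v)$ by \ref{item:RR1} for the primed data. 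Uniqueness of $\psi$ is forced because, by \ref{item:GR2} for $U$, one must have $\psi(g \otimes v) = \psi(\rho(g)(\varphi(v))) = \rho'(g)(\psi(\varphi(v))) = \rho'(g)(\varphi'(v))$, and the symbols $g \otimes v$ span $U$. Finally, uniqueness of the globalization up to canonical isomorphism follows formally: applying \ref{item:GR3} in both directions gives mutually inverse $G$-homomorphisms compatible with $\varphi, \varphi', \tau, \tau'$, exactly as in Remark~\ref{rem:globuniversal}.

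The step I expect to be the main obstacle is pinning down the well-definedness of $\psi$ and, dually, the precise statement that $g \otimes v = 1_G \otimes \pi(g)(v)$ in $U$ exactly when $v \in V_{g^{-1}}$ — in other words, identifying the correct relations defining $U$ so that it is neither too big (failing the universal property's uniqueness) nor too small (failing surjectivity of $\tau$ or \ref{item:RR2}). Getting this right amounts to a careful bookkeeping with the witnessed-homomorphism axioms \ref{item:PR1}--\ref{item:PR3} and the idempotent identities of Lemma~\ref{lem:idemp}; everything else is formal nonsense about tensor products and left multiplication. If one prefers to avoid the partial group algebra entirely, the alternative is to build $U$ as an explicit quotient of $\bigoplus_{g \in G} V$ by the subspace generated by all $\big(g;\, \pi(h)(v)\big) - \big(gh;\, \pi(h^{-1}h)(v)\big)$-type elements — but this is just the same construction unwound, and the same bookkeeping is unavoidable.
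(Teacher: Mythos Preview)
Your principal construction $U = \C G \otimes_{\C_{par}G} V$ does not work: it imposes far too many relations. Via the algebra map $\C_{par}G \to \C G$, the right $\C_{par}G$-action on $\C G$ is $g\cdot [h]=gh$, so in the tensor product one has $gh\otimes v = g\otimes \pi(h)(v)$ for \emph{all} $g,h,v$, not just for $v\in V_{h^{-1}}$. Taking $g=1_G$ collapses every $h\otimes v$ to $1_G\otimes \pi(h)(v)$, and iterating can kill everything. Concretely, let $G=\{1,s\}$ be cyclic of order $2$ and $(V,\pi)=(\C,\pi(1)=1,\pi(s)=0)$, a genuine partial representation; then $s\otimes 1 = 1\otimes \pi(s)(1)=0$ and $1\otimes 1 = s\cdot s\otimes 1 = s\otimes \pi(s)(1)=0$, so your $U$ is zero, whereas the globalization must be the $2$-dimensional permutation representation of $G$. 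Relatedly, your $\tau$ does \emph{not} descend to $\C G\otimes_{\C_{par}G}V$: well-definedness would require $\pi(gh)(v)=\pi(g)\pi(h)(v)$ for all $g,h,v$, which is exactly what fails for partial representations.

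The paper's construction avoids this by tensoring over $\C$ and quotienting $\C[G]\otimes_\C V$ only by the subspace $Z$ spanned by $g\otimes v - h\otimes \pi(h^{-1}g)(v)$ for $v\in V_{g^{-1}h}$; the restriction to $v\in V_{g^{-1}h}$ is precisely what prevents the collapse above and is what makes both $\tau$ and $\psi$ well-defined (via Lemma~\ref{lem:idemp} and \ref{item:RR2} respectively). Your closing fallback sketch gestures at this quotient but omits the crucial qualifier on $v$; once that is inserted, the argument is essentially the paper's, and all the verifications you outline go through.
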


\begin{proof}
	Let $(V,\pi)$ be a $G$-partial representation. Set $V_g \coloneqq \pi(g)\pi(g^{-1})(V)$ for all $g\in G$. Consider, in the complex vector space $\mathbb{C}[G]\otimes V$, the subspace $Z$ generated by the vectors 
	\begin{equation}\label{eq:Z}
	\{g\otimes v-h\otimes \pi(h^{-1}g)(v)\mid g,h\in G, v\in V_{g^{-1}h}\}
	\end{equation}
	and let $U$ be the quotient space $(\mathbb{C}[G]\otimes V)/Z$. Recall that the left multiplication on the first tensorand makes of $\mathbb{C}[G]\otimes V$ a $G$-global representation. This induces a structure of $G$-global representation $(U,\rho)$ on $U$ by setting
	\[
	\rho(g)(\overline{h\otimes v})\coloneqq \overline{gh\otimes v} 
	\]
	for all $g,h\in G$ and $v\in V$, where $\overline{t}$ denotes the coset of $t\in \mathbb{C}[G]\otimes V$ in $U$.
	To check that $\rho$ is well defined, it is enough to notice that $Z$ is a $G$-subrepresentation: given $g,h,k\in G$ and $v\in V_{h^{-1}k}$, we have
	\[ 
	gh \otimes v - gk \otimes \pi(k^{-1}h)(v) = gh \otimes v - gk \otimes \pi(k^{-1}g^{-1}gh)(v) = (gh) \otimes v - (gk) \otimes \pi\left((gk)^{-1}(gh)\right)(v) \in Z
	\]
	which proves the claim.
	
	Clearly the maps $\rho(g)$ are invertible, as $\rho(g)^{-1}=\rho(g^{-1})$, so that $\rho$ gives indeed a global representation of $G$. 
	
	Consider now the map $\varphi\colon V\to U$ defined by $\varphi(v)\coloneqq \overline{1_G\otimes v}$ for all $v\in V$, and the map $\tau:U\to V$ defined by $\tau(\overline{g\otimes v})\coloneqq \pi(g)(v)$ for all $v\in V$. Observe that the latter is well defined, since the map $\widetilde{\tau}\colon \mathbb{C}[G]\otimes V\to V$ defined by $\widetilde{\tau}(g\otimes v)\coloneqq \pi(g)(v)$ sends the generators of $Z$ to $0$: for $h,k\in G$ and $v\in V_{h^{-1}k}=\pi(h^{-1}k)\pi(k^{-1}h)(V)$, we have 
	\begin{align*} 
		\widetilde{\tau}(h\otimes v) & \stackrel{\phantom{\ref{item:PR2}}}{=}  \pi(h)(v) \stackrel{\eqref{eq:projections}}{=} \pi(h)\pi(h^{-1}k)\pi(k^{-1}h) (v) \stackrel{\ref{item:PR2}}{=} \pi(h)\pi(h^{-1})\pi(k)\pi(k^{-1}h) (v)\\
		& \stackrel{\ref{item:PR2}}{=}  \pi(h)\pi(h^{-1})\pi(k)\pi(k^{-1})\pi(h) (v) \stackrel{\eqref{eq:commidem}}{=} \pi(k)\pi(k^{-1})\pi(h)\pi(h^{-1})\pi(h) (v)\\
		& \stackrel{\ref{item:PR2}}{=}  \pi(k)\pi(k^{-1})\pi(h) (v) \stackrel{\ref{item:PR2}}{=} \pi(k)\pi(k^{-1}h) (v) = \widetilde{\tau}(k\otimes \pi(k^{-1}h)(v))
	\end{align*}
	and so $\tau$ is well defined.
	
	We want to show that $(U,\rho,\varphi,\tau)$ is a globalization of $(V,\pi)$. We already showed that $(U,\rho)$ is a global representation of $G$, proving \ref{item:GR1}. Moreover, it is clear that $\tau\circ \varphi=\Id_V$ and, by construction, $\tau(\rho(g)(\varphi(v))=\pi(g)(v)$. To prove the remaining property of \ref{item:GR2}, observe that for each $v\in V_{g^{-1}}$, \[ \varphi(\pi(g)(v))=\overline{1_G\otimes \pi(g)(v)} = \overline{g\otimes v}=\rho(g)(\overline{1_G\otimes v})=\rho(g)(\varphi(v)).  \]
	Therefore $(V,\pi)$ is the restriction of $(U,\rho)$ via $\varphi$ and $\tau$, proving \ref{item:GR2}.
	
	In order to prove \ref{item:GR3}, let $(U',\rho',\varphi',\tau')$ be another quadruple satisfying \ref{item:GR1} and \ref{item:GR2}. Since
	\begin{equation} \label{eq:obvious}
	U=\sum_{g \in G}\rho(g)(\varphi(V)),
	\end{equation}
	if a $\psi\colon U\to U'$ with the properties stated in \ref{item:GR3} exists, then it is uniquely determined by the property $\psi(\varphi(v))=\varphi'(v)$ for all $v\in V$, which gives $\psi(\overline{1_G\otimes v})=\psi(\varphi(v))=\varphi'(v)$, and the property $\psi(\rho(g)(u))=\rho'(g)(\psi(u))$ for all $u\in U$ and $g\in G$, so that \[\psi(\overline{g\otimes v})=\psi(\rho(g)(\overline{1_G\otimes v}))=\psi(\rho(g)(\varphi(v))) =\rho'(g)(\psi(\varphi(v)))=\rho'(g)(\varphi'(v)).\]
	This shows the uniqueness of such a map $\psi$, and it suggests how to define it: we define $\widetilde{\psi}\colon \C[G]\otimes V\to U'$ by setting $\widetilde{\psi}(g\otimes v)\coloneqq \rho'(g)(\varphi'(v))$ for all $g\in G$ and $v\in V$. This map sends the generators of $Z$ to $0$: indeed if $g,h\in G$ and $v\in V_{g^{-1}h}$,
	\[ \widetilde{\psi}(g\otimes v)= \rho'(g)(\varphi'(v))=\rho'(h)\rho'(h^{-1}g)(\varphi'(v)) \stackrel{\ref{item:RR2}}{=} \rho'(h)(\varphi'(\pi'(h^{-1}g)(v)))= \widetilde{\psi}(h\otimes \pi'(h^{-1}g)(v))\]
	and hence the map $\psi\colon U\to U'$ given by $\psi(\overline{g\otimes v})\coloneqq \rho'(g)(\varphi'(v))$ is well-defined. Notice that, by construction, $\psi$ is a $G$-homomorphism and $\psi\circ \varphi=\varphi'$. 
	
	The property $\tau'\circ \psi= \tau$ follows easily from
	\[ \tau(\rho(g)(\varphi(v)))=\pi(g)(v)=\tau'(\rho'(g)(\varphi'(v)))= \tau'(\rho'(g)(\psi(\varphi(v))))= \tau'(\psi(\rho(g)(\varphi(v)))) \]
	and the obvious \eqref{eq:obvious}.	
	This shows the existence of $\psi$, completing the proof of \ref{item:GR3}.
	
	Therefore, a globalization exists. Its uniqueness now follows easily from the universal property \ref{item:GR3}.
\end{proof}
\begin{remark} \label{rem:primeaxioms}
	It follows from the proof of Theorem \ref{thm:globalization} that property \ref{item:GR3} of our definition of globalization can be replaced by the following two properties:
	\begin{enumerate}[label=(GR\arabic*'),start=3,ref=(GR\arabic*')]
		\item\label{item:GR3'} $U=\sum_{g\in G}\rho(g)(\varphi(V))$;
		\item\label{item:GR4'} for every quadruple $(U',\rho',\varphi',\tau')$ satisfying \ref{item:GR1} and \ref{item:GR2}, the assignment
		\[\psi(\rho(g)(\varphi(v)))\coloneqq \rho'(g)(\varphi'(v))\qquad \text{for all }g\in G,v\in V \] gives a well-defined linear map $\psi\colon U\to U'$.
	\end{enumerate}
\end{remark}

\begin{example}
	If $\alpha=(\{X_g\}_{g\in G},\{\alpha_g\}_{g\in G})$ is a partial action of a group $G$ on a set $X$, then by Abadie's Theorem~\ref{thm:abadie} there exists a (unique up to isomorphism) globalization $(Y,\beta,\varphi)$ of $\alpha$. It is easy to check that the linearization $(\C[Y],\hat{\beta})$ of the global action of $G$ on $Y$ together with the obvious linearization $\hat{\varphi}\colon \C[X]\to \C[Y]$ of $\varphi$ and the projection $\hat{\tau}\coloneqq P_X\colon \C[Y]\to \C[X]$ gives a globalization of the linearization $(\C[X],\hat{\alpha})$ of $\alpha$. Indeed the properties \ref{item:GR1} and \ref{item:GR2} have been already discussed in Example \ref{ex:linear_restriction}. Instead of \ref{item:GR3}, by Remark~\ref{rem:primeaxioms}, we will check \ref{item:GR3'} and \ref{item:GR4'}. Observe that property \ref{item:GL5} of $(Y,\beta,\varphi)$ implies property \ref{item:GR3'} of its linearization: it is enough to proceed by double inclusion, after noticing that $\hat{\beta}(g)\left(\hat{\varphi}(x)\right) = \beta_g(\varphi(x))$ for all $x \in X$, $g \in G$. Now in order to check \ref{item:GR4'}, let us show that $Y'\coloneqq \bigcup_{g\in G}\bigcup_{x\in X}\rho'(g)(\varphi'(x))\subseteq U'$ together with the restrictions of $\varphi'$ to $X$ and of $\rho'(g)$ to $Y'$ for all $g\in G$ form a triple satisfying \ref{item:GL1}-\ref{item:GL4}. Only \ref{item:GL3} is not immediate. Pick $x \in X_g$ and set $y \coloneqq \alpha_{g^{-1}}(x) \in X_{g^{-1}}\subseteq \C[X]_{g^{-1}}$. It follows that
	\[
	\varphi'(x) = \varphi'(\alpha_g(y)) = \varphi'(\hat{\alpha}(g)(y)) \stackrel{\ref{item:RR2}}{=} \rho'(g)(\varphi'(y)) \in \rho'(g)(\varphi'(X))
	\]
	and hence $\varphi'(X_g) \subseteq \varphi'(X) \cap \rho'(g)(\varphi'(X))$. For the reverse inclusion, pick $z \in X$ such that $\varphi'(z) = \rho'(g)(\varphi'(y))$ for a certain $y \in X$. Then
	\[
	z = \tau'(\varphi'(z)) = \tau'\left(\rho'(g)(\varphi'(y))\right) \stackrel{\eqref{eq:induced}}{=} \hat{\alpha}(g)(y) = \alpha_g(y) \in X_{g}
	\]
	and hence $\varphi'(z) \in \varphi'(X_{g})$. Therefore, by Remark~\ref{rem:globuniversal}, the map $\psi\colon Y\to Y'$ given by $\psi(\beta_g(\varphi(x)))\coloneqq \rho'(g)(\varphi'(x))$ for all $x\in X$ and $g\in G$ is well-defined and its linearization $\hat{\psi}\colon \C[Y]\to U'$ is the map required in \ref{item:GR4'}. This completes the proof that $(\C[Y],\hat{\beta})$ is indeed the globalization of $(\C[X],\hat{\alpha})$.
\end{example}
We will see more examples later in this article.

\section{$H$-global $G$-partial representations}

\noindent In this section we introduce $H$-global $G$-partial representations and we describe some of their general properties.

\subsection{Basic definitions and examples} \label{sec:HGbasics}

We introduce some basic notions and examples, setting up the framework in which we are going to work for the rest of the present article. 

\medskip

\begin{definition}
	Let $H$ be a subgroup of a group $G$.
	
	A $G$-partial action $\alpha=(\{X_g\}_{g\in G},\{\alpha_g\}_{g\in G})$ on a set $X$ is called \emph{$H$-global} if $X_h=X$ for all $h\in H$.
	
	A $G$-partial representation $(V,\pi)$ is \emph{$H$-global} if the restriction of $\pi$ to $H$ is a global representation of $H$.
\end{definition}

\begin{example} \label{ex:trivialHglGpar}
	Let $G$ be a group and $H$ be a subgroup of $G$. There are two kinds of $H$-global $G$-partial representations that are always available.
	\begin{enumerate}[label=(\alph*)]
		
		\item Any $G$-global representation is obviously $H$-global $G$-partial. 
		
		\item Let $(W,\rho)$ be a global representation of $H$. Then we can always construct an $H$-global $G$-partial representation $(W,\overline{\rho})$ in the following way: we define
		\[ \overline{\rho}(g)\coloneqq \left\{\begin{array}{ll} 
		0 & \text{ if }g\in G\setminus H\\
		\rho(g) & \text{ if }g\in H 
		\end{array}\right. . \]
		It is straightforward to check that this is indeed an $H$-global $G$-partial representation.
		
	\end{enumerate}
\end{example}

Before seeing more examples, we introduce one more definition.

\begin{definition}
	Let $G$ be a group and let $(V,\pi)$ be a $G$-partial representation. We define the \emph{globalizer} $H(V)=H(V,\pi)$ of $V$ to be
	\[ H(V)\coloneqq \{g\in G\mid \pi(g)\text{ is invertible} \}.  \]
\end{definition}

\begin{remark}
	Let $(V,\pi)$ be a $G$-partial representation. Since for all $g \in H(V)$
	\[
	\pi(g)\pi(g)^{-1}\pi(g) = \pi(g) \stackrel{\ref{item:PR2}}{=} \pi(g)\pi(g^{-1}) \pi(g) \qquad \Longrightarrow \qquad \pi(g)^{-1} = \pi(g^{-1}),
	\]
	it is straightforward to check that $H(V)$ is a subgroup of $G$: it is the biggest subgroup of $G$ that acts globally on $V$ via $\pi$, i.e.\ $(V,\pi)$ is $H$-global if and only if $H$ is a subgroup of $H(V)$.
\end{remark}

It turns out that the globalizer $H(V)$ of a $G$-partial representation $(V,\pi)$ is typically nontrivial. This is one of the main motivations to study $H$-global $G$-partial representations.

\begin{example} \label{ex:Hglobal_action}
	Let $K$ be a subgroup of a group $G$ and consider the action of $G$ on the left cosets $G/K$ of $K$ by left multiplication. Let $A\subseteq G/K$ be a set of left cosets of $K$ and consider the restriction $\alpha$ of the given action to $A$, which gives us a $G$-partial action. Set \[ H(A)\coloneqq \{g\in G\mid gA=A\}. \] It is easy to check the $G$-partial action $\alpha$ is $H$-global if and only if $H$ is a subgroup of $H(A)$.
	
	Consider now the linearization $(\C[A],\hat{\alpha})$ of $\alpha$. It is clear that the globalizer of this $G$-partial representation is in fact $H(\C[A])=H(A)$.
\end{example}
The following example is a special case of the previous one; it will be relevant in Section~\ref{sec:SnSnminus1}.
\begin{example} \label{ex:SkSnk_action}
	Let us fix some notation. For any positive integer $n\in  \mathbb{N}$, let $[n]:=\{1,2,\dots,n\}$, and for $I\subseteq [n]$, let
	\[ \mathfrak{S}_{n}^I\coloneqq \{\sigma\in \mathfrak{S}_n\mid \sigma(I)=I\},\] 
	so for example $\mathfrak{S}_{n}^{\varnothing}=\mathfrak{S}_{n}^{[n]}=\mathfrak{S}_{n}$ and $\mathfrak{S}_{n}^{\{1\}}=\mathfrak{S}_{n}^{[1]}=\mathfrak{S}_{n}^{[n]\setminus\{1\}}\cong \mathfrak{S}_{1}\times \mathfrak{S}_{n-1} \equiv \mathfrak{S}_{n-1}\subseteq \mathfrak{S}_{n}$. 
	
	More generally, for $1\leq k< n$, consider $\mathfrak{S}_{k}\times \mathfrak{S}_{n-k}\equiv \mathfrak{S}_{n}^{[k]}=\mathfrak{S}_{n}^{[n]\setminus [k]}$. We want to describe the action of $\mathfrak{S}_n$ on the left cosets $\mathfrak{S}_n/(\mathfrak{S}_{k}\times \mathfrak{S}_{n-k})$.
	
	Observe that, given $\sigma \in \mathfrak{S}_{n}$ and $I\subseteq [n]$,
	\begin{align*}
	\sigma   \mathfrak{S}_{n}^{I} & =\{\tau\in \mathfrak{S}_n\mid \tau(I)=\sigma(I) \}
	\end{align*}
	therefore, for $\rho \in \mathfrak{S}_{n}$,
	\[
	\rho\cdot \left(\sigma   \mathfrak{S}_{n}^{I}\right) = \tau   \mathfrak{S}_{n}^{I} \quad \iff \quad \rho\sigma(I) = \tau(I),
	\]
	if and only if $\rho$ maps the subset $\sigma(I)\subseteq [n]$ into the subset $\tau(I)\subseteq [n]$ and hence we can identify the action of $\mathfrak{S}_{n}$ on the left cosets of $\mathfrak{S}_{k}\times \mathfrak{S}_{n-k}\equiv \mathfrak{S}_{n}^{[k]}$ with the action of $\mathfrak{S}_{n}$ on 
	\[ \binom{[n]}{k}\coloneqq \{A\subseteq [n]\mid |A|=k \} \]
	via $\sigma\Sym_n^{[k]} \leftrightarrow \sigma([k])$. We will freely use this identification in the rest of this article.
	
	Notice that for $k=1$ we are identifying the action of $\mathfrak{S}_n$ on the cosets $\mathfrak{S}_n/\mathfrak{S}_{n-1}$ with the defining action of $\Sym_n$ on the set $\binom{[n]}{1}=[n]$.
	
	We now set 
	\[Y\coloneqq \left\{\left.A\in \binom{[n]}{k}\ \right|\ 1\in A \right\}\subsetneq \binom{[n]}{k}\]
	and call $\alpha$ the $\Sym_n$-partial action on $Y$ that we get by restriction from the $\Sym_n$-global action on $\binom{[n]}{k}$. For instance, for $n=4$ and $k=2$,
	\[
	\binom{[4]}{2} = \left\{\{1,2\},\{1,3\},\{1,4\},\{2,3\},\{2,4\},\{3,4\}\right\}, \qquad Y = \left\{\{1,2\},\{1,3\},\{1,4\}\right\}
	\]
	and, for example,
	\begin{gather*}
	Y_{(1,2)} = Y \cap (1,2)\cdot Y = \left\{\{1,2\},\{1,3\},\{1,4\}\right\} \cap \left\{\{1,2\},\{2,3\},\{2,4\}\right\} = \{(1,2)\}\\
	\text{while} \qquad Y_{(2,3,4)} = \left\{\{1,2\},\{1,3\},\{1,4\}\right\} \cap \left\{\{1,3\},\{1,4\},\{1,2\}\right\} = Y.
	\end{gather*}
	In the notation of Example~\ref{ex:Hglobal_action}, it is easy to check that $H(Y)=\Sym_n^{[1]}$, so that the globalizer of the linearization $(\C[Y],\hat{\alpha})$ of this $\Sym_n$-partial action is $\Sym_n^{[1]}$. 
\end{example}
As we develop the theory, we will see a lot of additional examples.

\subsection{A general construction} \label{sec:genconstruction}

In this section we give a general construction that gives a large class of examples of $H$-global $G$-partial representations.

\medskip

We start with a finite group $G$ and two fixed subgroups $H$ and $K$ of $G$. Let $G=\coprod_{i=1}^ng_iK$.

Let $A\subseteq G$ be a union of $(H,K)$-double cosets, i.e.\ $hA=A$ for all $h\in H$ and $Ak=A$ for all $k\in K$. Denote by $A/K$ the set of left cosets of $K$ contained in $A$. Let $(W,\rho)$ be a global representation of $K$. With these data we want to construct an $H$-global $G$-partial representation.

Recall that the induced representation $\mathsf{Ind}_K^GW=\C[G]\otimes_{\mathbb{C}[K]}W$ decomposes as a vector space as
\[ \mathsf{Ind}_K^GW\cong \bigoplus_{g_iK\in G/K}W^{g_i}, \]
where we denoted by $W^{g_i}$ the subspace $\C g_iK\otimes_{\mathbb{C}[K]}W$ corresponding to the coset $g_iK$. For every $i=1,\ldots,n$, we call $\phi_i\colon W\to W^{g_i}$ the $\C$-linear isomorphism defined by $\phi_i(w)\coloneqq g_i\otimes_{\mathbb{C}[K]} w$ for every $w\in W$.

Consider the subspace
\[ W_A:= \bigoplus_{g_iK\in A/K}W^{g_i}.\]

We define a linear map $\pi\colon G\to \End(W_A)$ in the following way: for every $g_iK\in A/K$ and every $x\in W^{g_i}$, there exists a unique $w \in W$ such that $x = \phi_i(w)$. Thus, we set
\begin{equation}\label{s1eq:GenParRep}
\pi(g)(x)\coloneqq \left\{ \begin{array}{ll}
\phi_j(\rho(k)(w)) & \text{ if }x=\phi_i(w),gg_i=g_jk,k\in K,\text{ and }g_jK\in A/K\\
\qquad 0 & \text{ otherwise }
\end{array} \right. .
\end{equation}
It is easy to check that this definition does not depend on the choice of the representatives of the left cosets of $K$.

\begin{lemma}
	The pair $(W_A,\pi)$ is an $H$-global $G$-partial representation.
\end{lemma}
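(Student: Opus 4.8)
The plan is to verify directly that $\pi$ defined in \eqref{s1eq:GenParRep} satisfies the three axioms \ref{item:PR1}, \ref{item:PR2}, \ref{item:PR3} of Definition~\ref{def:partial_rep}, and then to check that $\pi$ restricts to a genuine (global) representation of $H$, i.e.\ that $\pi(h)$ is invertible for every $h \in H$. The first axiom \ref{item:PR1} is immediate: $1_G \cdot g_i = g_i \cdot 1_K$ with $1_K \in K$ and $g_iK \in A/K$, so $\pi(1_G)(\phi_i(w)) = \phi_i(\rho(1_K)(w)) = \phi_i(w)$.

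For \ref{item:PR2} and \ref{item:PR3}, I would fix $x = \phi_i(w) \in W^{g_i}$ with $g_iK \in A/K$ and unwind both sides by tracking which coset $g_i$ gets moved into and what group element of $K$ is picked up. Write $h g_i = g_p k_1$ with $k_1 \in K$, and $g h g_i = g_q k_2$ with $k_2 \in K$; note that since $\{g_1,\dots,g_n\}$ is a transversal, $k_2 = k_2(g,h,i)$ and $k_1 = k_1(h,i)$ are uniquely determined, and moreover $g g_p = g_q (k_2 k_1^{-1})$, so that applying $\pi(g)$ after $\pi(h)$ accumulates $\rho(k_2 k_1^{-1})\rho(k_1) = \rho(k_2)$ exactly when the intermediate coset $g_pK$ also lies in $A/K$. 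The key bookkeeping point is that $A$ is a union of $(H,K)$-double cosets: since $h \in H$ (for the relevant instances of \ref{item:PR2} we take $h = g^{-1}$ and for \ref{item:PR3} we take $h$ in the place of the witness, but in both cases the ``witness'' element lands us back in a controlled situation), left-multiplication by elements arising from the relations preserves membership in $A/K$. Concretely, for \ref{item:PR2} one checks that $\pi(g^{-1})\pi(g)$ is the projection onto $\bigoplus_{g_iK \in A/K,\ g^{-1}g_iK \in A/K} W^{g_i}$ — a sub-coordinate projection — and similarly $\pi(g^{-1})\pi(gh)$ and $\pi(g^{-1})\pi(g)\pi(h)$ both equal $\phi$-transported $\rho$-action on the coordinates where the whole chain of cosets $g_iK,\ hg_i\cdots$ stays inside $A/K$; the two sides agree because the condition ``$g_iK \in A/K$ and $g g_i K \in A/K$'' forces $g^{-1}(gg_i)K = g_iK \in A/K$ automatically, so no extra constraint is lost. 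Axiom \ref{item:PR3} is handled symmetrically, using instead that $\pi(h)\pi(h^{-1})$ is a coordinate projection and that $Ak = A$ controls the right-hand factor.

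For the $H$-globality, fix $h \in H$. Since $hA = A$, left multiplication by $h$ permutes the cosets in $A/K$: if $g_iK \in A/K$ then $hg_i = g_{p}k_1$ forces $g_pK = hg_iK \subseteq hA = A$, so $g_pK \in A/K$. Hence for every $x = \phi_i(w)$ the ``otherwise'' branch of \eqref{s1eq:GenParRep} never triggers, and $\pi(h)$ maps $W^{g_i}$ isomorphically onto $W^{g_p}$ via $\phi_p \circ \rho(k_1) \circ \phi_i^{-1}$, with $\rho(k_1)$ invertible because $\rho$ is a representation. Assembling these coordinate isomorphisms shows $\pi(h)$ is a linear automorphism of $W_A$; and a direct check (or the already-verified axioms) gives $\pi(h)\pi(h^{-1}) = \mathrm{Id}$, so $h \mapsto \pi(h)$ is multiplicative on $H$. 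I expect the main obstacle to be purely notational: keeping straight the three or four auxiliary $K$-elements and intermediate coset labels in the verification of \ref{item:PR2} and \ref{item:PR3} without drowning in indices. Choosing once and for all the transversal $\{g_1,\dots,g_n\}$ and writing every relevant product in the normal form $g_j k$ with $g_j$ the transversal representative and $k \in K$ should make the $K$-components compose transparently and reduce each axiom to the combinatorial statement that the relevant chains of cosets stay inside $A/K$, which is exactly what the $(H,K)$-double coset hypothesis guarantees.
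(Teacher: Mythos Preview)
Your approach is the same as the paper's---a direct verification of the axioms, which the paper declares ``tedious but straightforward'' and omits. Your treatment of \ref{item:PR1} and of $H$-globality is correct and clear.

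However, your account of \ref{item:PR2} and \ref{item:PR3} misattributes the role of the hypotheses. You write that ``the key bookkeeping point is that $A$ is a union of $(H,K)$-double cosets'' and invoke ``since $h \in H$'' in that context. But in the axioms \ref{item:PR2}--\ref{item:PR3} the elements $g,h$ range over all of $G$; the witness $g^{-1}$ (resp.\ $h^{-1}$) has no reason to lie in $H$, so left $H$-invariance of $A$ cannot be what makes the argument work. If you actually track both sides of \ref{item:PR2} on $\phi_i(w)$, each is nonzero precisely when $hg_iK \in A/K$ and $ghg_iK \in A/K$, and the resulting vectors agree; this uses only that $A$ is a union of left $K$-cosets (so that \eqref{s1eq:GenParRep} is well posed), never that $hA=A$. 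The paper singles out exactly this point: the $G$-partial property holds for any $A$ with $Ak=A$, and $hA=A$ is invoked solely for $H$-globality. Incidentally, your projection formula has a sign slip: $\pi(g^{-1})\pi(g)$ is the projection onto $\bigoplus_{gg_iK \in A/K} W^{g_i}$, not the one indexed by $g^{-1}g_iK \in A/K$ (the latter is $\pi(g)\pi(g^{-1})$). Once these are cleaned up, your sketch goes through.
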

\begin{proof}
	The proof that $(W_A,\pi)$ is a $G$-partial representation does not require that $A$ satisfies the property that $hA=A$ for all $h\in H$. This property is used only to show that this $G$-partial representation is indeed $H$-global. We leave the tedious but straightforward details to the reader.
\end{proof}
\begin{remark} \label{rem:ourconstr_restriction}
	Observe that the construction of $(W_A,\pi)$ is in fact the restriction (cf. Definition~\ref{def:restriction_Gglobal}) of the global $G$-representation $\Ind_K^GW$ to the subspace $W_A$ via the obvious inclusion $\varphi$ and the projection $\tau\colon \Ind_K^GW\to W_A$ whose kernel is \[\ker\tau=\bigoplus_{g_iK\notin A/K}W^{g_i}.\]
\end{remark}

\begin{example}
	Using the same notation as before, let $W$ be the trivial representation of $K$. Then in this case $\Ind_K^GW$ is simply the linearization of the action of $G$ on the left cosets $G/K$ of $K$ by left multiplication. Consider a subset $A\subseteq G$ such that $Ak=A$ for all $k\in K$. Then our construction $(W_A,\pi)$ in this case corresponds simply to the linearization of the restriction of this action to $A/K$ (cf.\ Example~\ref{ex:linear_restriction}).
\end{example}
We will see in Section~\ref{sec:irreducibles} that all the irreducible objects in the category of $H$-global $G$-partial representations can be built with the construction given in the present section.

\subsection{Some general properties of partial representations}\label{sec:GeneralProperties}

In this section we discuss some general properties of $G$-partial representations. In particular we introduce certain orthogonal idempotents that give a general decomposition of such representations. Then we specialize this discussion to the case of $H$-global $G$-partial representations, to see what this decomposition looks like in this case. 

\medskip

All over this section we consider a partial representation $(V,\pi)$ of a finite group $G$.
\begin{notation} 
	Set
	\[\mathcal{P}_{H}(G/H)\coloneqq \{A\subseteq G\mid H\subseteq A\text{ and }A\text{ is a union of left cosets of }H  \}. \]
	In the case \(H = \{1_G\}\), we use the notation
	\[
	\mathcal{P}_{1}(G)\coloneqq \mathcal{P}_{\{1_G\}}(G/\{1_G\})=\{A\subseteq G\mid 1_G\in A\}.
	\]
\end{notation}

\begin{definition}
	\label{PA}
	For every $A\subseteq G$ define the operator
	\[ P_A^\pi=P_A:=\prod_{g\in A}\pi(g)\pi(g^{-1})\cdot \prod_{\overline{g}\in G\setminus A}(\Id_V-\pi(\overline{g})\pi(\overline{g}^{-1})). \]
	Observe that, thanks to Lemma~\ref{lem:idemp}, in the definition of $P_A^\pi$ we do not need to specify the order of the products.
\end{definition}

\begin{remark} \label{rem:P_Azero}
	Notice that if $1_G\notin A$, i.e.\ $A\notin \mathcal{P}_1(G)$, then $P_A$ is the zero operator, as it contains the factor $\Id_V-\pi(1_G)\pi(1_G)=0$.
\end{remark}
\begin{lemma}
	For any $A \subseteq G$ and any $g\in G$,
	\begin{equation}\label{s3eq:PAh}
	\pi(g)P_A = P_{gA} \pi(g).
	\end{equation}
	In particular, if $g^{-1}\notin A$, then 
	\begin{equation} \label{eq:gP_A=0}
	\pi(g)P_A=0=P_A\pi(g^{-1}).
	\end{equation}
\end{lemma}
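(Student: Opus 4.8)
The plan is to establish the identity \eqref{s3eq:PAh} by checking it on the "building blocks" $\pi(g)\pi(g^{-1})$ and then assembling the product. The key observation is that conjugating a single idempotent $e_a := \pi(a)\pi(a^{-1})$ by $\pi(g)$ in the presence of the witness $\pi(g^{-1})$ produces $e_{ga}$. Concretely, I would first prove the auxiliary relation
\begin{equation*}
\pi(g)\,\pi(a)\pi(a^{-1})\,\pi(g^{-1}) = \pi(ga)\pi(a^{-1}g^{-1})\,\pi(g)\pi(g^{-1}),
\end{equation*}
which follows from \eqref{eq:idemp} applied with $h = a^{-1}$ (or directly from a short \ref{item:PR2}/\ref{item:PR3} manipulation together with Lemma~\ref{lem:idemp}), giving $\pi(g)\,e_a = e_{ga}\,\pi(g)\,e_{1_G}\cdot(\dots)$; more usefully, using the commutativity of the idempotents in Lemma~\ref{lem:idemp} one gets the clean statement $\pi(g)\,e_a = e_{ga}\,\pi(g)$. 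Similarly $\pi(g)\,(\Id_V - e_a) = (\Id_V - e_{ga})\,\pi(g)$ by subtracting. The map $a \mapsto ga$ is a bijection of $G$ sending $A$ onto $gA$ and $G\setminus A$ onto $G\setminus gA$, so pushing $\pi(g)$ past the entire product
\[
P_A = \prod_{a\in A} e_a \cdot \prod_{\overline{a}\in G\setminus A}(\Id_V - e_{\overline{a}})
\]
one factor at a time — legitimate since all the idempotents commute with each other by Lemma~\ref{lem:idemp}, so the order is immaterial — yields exactly $P_{gA}\,\pi(g)$, which is \eqref{s3eq:PAh}.

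For the second assertion, suppose $g^{-1}\notin A$. Then $1_G \notin gA$, so by Remark~\ref{rem:P_Azero} we have $P_{gA} = 0$, and hence $\pi(g)P_A = P_{gA}\pi(g) = 0$. For the companion equality $P_A\pi(g^{-1}) = 0$, I would apply \eqref{s3eq:PAh} with $g$ replaced by $g^{-1}$ and $A$ replaced by $gA$: this gives $\pi(g^{-1})P_{gA} = P_A\,\pi(g^{-1})$, and since $P_{gA} = 0$ the left-hand side vanishes, so $P_A\pi(g^{-1}) = 0$. (Alternatively, take adjoints / use the symmetry of the argument directly on $P_A\pi(g^{-1})$.)

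**Main obstacle.** The only real subtlety is the auxiliary commutation relation $\pi(g)\,e_a = e_{ga}\,\pi(g)$: one must be careful that \ref{item:PR2} and \ref{item:PR3} are one-sided and only apply "in the presence of a witness", so the naive rewriting $\pi(g)\pi(a) \neq \pi(ga)$ in general. The honest path is to keep the full word $\pi(g)\pi(a)\pi(a^{-1})$, insert $\pi(g^{-1})\pi(g)$ via \eqref{eq:projections}/\ref{item:PR1} where needed, and reduce using \eqref{eq:idemp} from Lemma~\ref{lem:idemp} — this is a short but slightly delicate computation. Once that single relation is in hand, everything else is bookkeeping with commuting idempotents and the bijection $a\mapsto ga$, and the degenerate cases follow immediately from Remark~\ref{rem:P_Azero}.
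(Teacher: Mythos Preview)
Your approach is correct and essentially the same as the paper's: note that the ``clean statement'' $\pi(g)\,e_a = e_{ga}\,\pi(g)$ you are after is literally \eqref{eq:idemp} with $h=a$, so no intermediate formula or extra commutativity argument is needed, and pushing $\pi(g)$ past each factor of $P_A$ then gives \eqref{s3eq:PAh}. The second claim follows from the first together with Remark~\ref{rem:P_Azero}, exactly as you argue.
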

\begin{proof}
	The first claim follows immediately from \eqref{eq:idemp}, i.e.\ $\pi(g)\pi(h)\pi(h^{-1})=\pi(gh)\pi(h^{-1}g^{-1})\pi(g)$ for all $g,h\in G$. The second claim follows from the first one and from Remark~\ref{rem:P_Azero}.
\end{proof}

\begin{lemma}
	Consider $A\subseteq G$. If $g \in A$, then\[\pi(g)\pi(g^{-1})P_A = P_A\pi(g)\pi(g^{-1})=P_A.\] If \(g \notin A\), then \[\pi(g)\pi(g^{-1})P_A = P_A\pi(g)\pi(g^{-1})= 0.\]
\end{lemma}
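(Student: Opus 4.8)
The plan is to prove both statements directly from Definition~\ref{PA} and the elementary facts about the idempotents $\pi(g)\pi(g^{-1})$ established in Lemma~\ref{lem:idemp}. The key observation is that $P_A$ is itself an idempotent built as a product of the commuting idempotents $\pi(g)\pi(g^{-1})$ (for $g\in A$) and their complements $\Id_V-\pi(\bar g)\pi(\bar g^{-1})$ (for $\bar g\notin A$), so that $P_A$ commutes with every factor $\pi(g)\pi(g^{-1})$, and the result then follows by absorbing the relevant factor.

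First I would treat the case $g\in A$. Then $\pi(g)\pi(g^{-1})$ literally appears as one of the factors in the product defining $P_A$. Using the commutativity in \eqref{eq:commidem} to move that factor next to itself, together with the idempotency $\pi(g)\pi(g^{-1})\pi(g)\pi(g^{-1})=\pi(g)\pi(g^{-1})$, one gets $\pi(g)\pi(g^{-1})P_A=P_A$; commutativity again gives $P_A\pi(g)\pi(g^{-1})=P_A$. Second, for $g\notin A$, the complementary factor $\Id_V-\pi(g)\pi(g^{-1})$ appears in the product. Multiplying on the left (or right) by $\pi(g)\pi(g^{-1})$ and again using commutativity to bring the two factors together, one computes $\pi(g)\pi(g^{-1})\bigl(\Id_V-\pi(g)\pi(g^{-1})\bigr)=\pi(g)\pi(g^{-1})-\pi(g)\pi(g^{-1})=0$, and hence the whole product vanishes, giving $\pi(g)\pi(g^{-1})P_A=0=P_A\pi(g)\pi(g^{-1})$.

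I do not anticipate any real obstacle here: the only subtlety is the bookkeeping that justifies reordering the factors in $P_A$ so that a given $\pi(g)\pi(g^{-1})$ or $\Id_V-\pi(g)\pi(g^{-1})$ can be isolated, and this is exactly what Lemma~\ref{lem:idemp} guarantees (and is already flagged in the remark following Definition~\ref{PA}). One should perhaps spell out once that all the factors appearing in $P_A$ pairwise commute — the $\pi(g)\pi(g^{-1})$ commute with each other by \eqref{eq:commidem}, and consequently so do their complements $\Id_V-\pi(g)\pi(g^{-1})$ — so that $P_A$ commutes with each $\pi(g)\pi(g^{-1})$. After that, the two displayed identities are immediate, and this can safely be left as a short computation.

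Alternatively, one can phrase the argument more slickly: write $P_A = \pi(g)\pi(g^{-1})\cdot Q$ when $g\in A$ and $P_A = \bigl(\Id_V-\pi(g)\pi(g^{-1})\bigr)\cdot Q$ when $g\notin A$, where $Q$ is the product over the remaining elements of $G$; since $Q$ commutes with $\pi(g)\pi(g^{-1})$, multiplying $P_A$ by $\pi(g)\pi(g^{-1})$ on either side reduces, in the first case, to $\pi(g)\pi(g^{-1})\pi(g)\pi(g^{-1})Q = \pi(g)\pi(g^{-1})Q = P_A$, and in the second case to $\pi(g)\pi(g^{-1})\bigl(\Id_V-\pi(g)\pi(g^{-1})\bigr)Q = 0$. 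This is the form of the proof I would actually write down.
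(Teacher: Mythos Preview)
Your proof is correct and follows essentially the same approach as the paper, which simply notes that the statement is immediate from the definition of $P_A$ and Lemma~\ref{lem:idemp} (the commuting idempotent property). Your write-up just makes explicit the factorization $P_A = \pi(g)\pi(g^{-1})\cdot Q$ or $P_A = (\Id_V-\pi(g)\pi(g^{-1}))\cdot Q$ that the paper leaves implicit.
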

\begin{proof}
	It follows immediately from the definition of $P_A$, Lemma~\ref{lem:idemp} and equation~\eqref{eq:gP_A=0}.
\end{proof}

\begin{lemma} \label{lem:sumP_AequalId}
	The set $\{P_A\}_{A\in \mathcal{P}_{1}(G)}$ is a system of orthogonal idempotents. Moreover, $\sum_{A\in \mathcal{P}_{1}(G)}P_A=\Id_V$.
\end{lemma}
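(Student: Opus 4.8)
The plan is to verify the three assertions — idempotency, orthogonality, and completeness — in that order, using only the definition of $P_A$ together with the facts already established in Lemma~\ref{lem:idemp} and the two lemmas immediately preceding. Throughout, write $e_g \coloneqq \pi(g)\pi(g^{-1})$; by Lemma~\ref{lem:idemp} the family $\{e_g\}_{g\in G}$ consists of pairwise commuting idempotents, so any polynomial identity among the $e_g$ (and the $\mathrm{Id}_V - e_g$) that holds in a commutative ring of idempotents holds here.

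First I would observe that $P_A = \prod_{g\in A} e_g \cdot \prod_{\bar g \notin A}(\mathrm{Id}_V - e_{\bar g})$ is, literally, a product of commuting idempotents (each $e_g$ is idempotent, hence so is each $\mathrm{Id}_V - e_{\bar g}$, and they all commute), so $P_A^2 = P_A$ is immediate. For orthogonality, take $A \ne B$ in $\mathcal{P}_1(G)$; since they differ there is some $g$ lying in exactly one of them, say $g \in A \setminus B$. Then $P_A$ contains the factor $e_g$ while $P_B$ contains the factor $\mathrm{Id}_V - e_g$; using commutativity to bring these two factors together, their product contains $e_g(\mathrm{Id}_V - e_g) = e_g - e_g^2 = 0$, hence $P_AP_B = 0$ (and likewise $P_BP_A=0$). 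Here I am implicitly using that $P_A = 0$ for $A \notin \mathcal{P}_1(G)$ (Remark~\ref{rem:P_Azero}), so nothing is lost by restricting the index set to $\mathcal{P}_1(G)$.

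For the completeness statement $\sum_{A\in\mathcal{P}_1(G)} P_A = \mathrm{Id}_V$, the key identity is the telescoping/partition-of-unity expansion
\[
\prod_{g\in G}\bigl(e_g + (\mathrm{Id}_V - e_g)\bigr) = \mathrm{Id}_V,
\]
which holds trivially since each factor equals $\mathrm{Id}_V$. Expanding the left-hand product by distributivity (again legitimate because all the $e_g$ commute) yields a sum, indexed by subsets $A \subseteq G$, of the terms $\prod_{g\in A} e_g \cdot \prod_{\bar g \notin A}(\mathrm{Id}_V - e_{\bar g}) = P_A$. Thus $\sum_{A\subseteq G} P_A = \mathrm{Id}_V$, and by Remark~\ref{rem:P_Azero} the terms with $1_G \notin A$ vanish, leaving $\sum_{A\in\mathcal{P}_1(G)} P_A = \mathrm{Id}_V$.

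The only point requiring a little care — and what I would flag as the main (minor) obstacle — is justifying the distributive expansion of an infinite-looking product of endomorphisms; since $G$ is finite this is a genuine finite product, and since the factors pairwise commute the expansion over all $2^{|G|}$ subsets is valid in $\End(V)$ exactly as in a commutative ring. No convergence or ordering subtleties arise. Everything else is a direct bookkeeping consequence of Lemma~\ref{lem:idemp}.
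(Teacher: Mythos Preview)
Your proof is correct and follows essentially the same approach as the paper: idempotency and orthogonality come from the commuting-idempotent structure of the $e_g$, and completeness is obtained by expanding $\prod_{g\in G}\bigl(e_g + (\mathrm{Id}_V - e_g)\bigr) = \mathrm{Id}_V$ and discarding the terms with $1_G\notin A$ via Remark~\ref{rem:P_Azero}. If anything, you spell out the orthogonality step more explicitly than the paper does.
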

\begin{proof}
	The fact that every \(P_A\) is idempotent follows from Lemma~\ref{lem:idemp}, and it is easy to see that \(P_A P_B = P_B P_A = 0\) if \(A \neq B\). For the last statement, we compute
	\begin{align*}
	\Id_V &= \prod_{g \in G} \Id_V = \prod_{g \in G} (\Id_V - \pi(g) \pi(g^{-1}) + \pi(g) \pi(g^{-1})) \\
	&= \sum_{A \subseteq G} \left(\prod_{g \in A} \pi(g) \pi(g^{-1})\right) \left(\prod_{g \notin A} (\Id_V - \pi(g) \pi(g^{-1}))\right) \\
	&= \sum_{A \subseteq G: 1_G \in A} \left(\prod_{g \in A} \pi(g) \pi(g^{-1})\right) \left(\prod_{g \notin A} (\Id_V - \pi(g) \pi(g^{-1}))\right) \\
	&= \sum_{A \in \mathcal{P}_1(G)} P_A. \qedhere
	\end{align*}
\end{proof}

As a consequence of the previous lemma, if we set $V^A:=P_A^\pi(V)$, then we have the decomposition
\begin{equation} \label{eq:decomp}
\quad V=\bigoplus_{A\in \mathcal{P}_{1}(G)}V^A.
\end{equation}
Notice that, as observed above, if $g^{-1} \notin A$, then $\pi(g)(V^A)=0$.

Observe also that a $G$-homomorphism $\varphi\in \mathsf{Hom}_G(V,U)$, i.e.\ a morphism between the two $G$-partial representations $(V,\pi)$ and $(U,\eta)$, intertwines the action of $P_A^\pi$ and $P_A^\eta$, i.e. 
\[ P_A^\eta (\varphi(v))=\varphi(P_A^\pi(v))\quad \text{for all }v\in V, \]
so that, if we set $V^A\coloneqq P_A^\pi(V)$ and $U^A\coloneqq P_A^\eta(U)$, then we have the decompositions
\[ V=\bigoplus_{A\in \mathcal{P}_{1}(G)}V^A \quad \text{ and } \quad U=\bigoplus_{A\in \mathcal{P}_{1}(G)}U^A\]
and $\varphi$ is a graded map, i.e.\ $\varphi(V^A)\subseteq U^A$ for all $A$.

\medskip

We want to understand this decomposition in the case when the $G$-partial representation $(V,\pi)$ is $H$-global, for a subgroup $H$ of $G$, so we assume this from now on.

The following easy observation is so useful that deserves to be called a lemma.
\begin{lemma}\label{s6lemma:H-lin}
	Let \((V,\pi)\) be an \(H\)-global \(G\)-partial representation. Then for all $g\in G$ and $h\in H$ we have
	\begin{equation}  \label{eq:BasicLemma}
	\pi(gh)=\pi(g)\pi(h) \quad \text{ and } \quad \pi(hg)=\pi(h)\pi(g). 
	\end{equation}
\end{lemma}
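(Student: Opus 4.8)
The plan is to prove the two identities \eqref{eq:BasicLemma} directly from the defining axioms \ref{item:PR1}--\ref{item:PR3} together with the fact that $\pi$ restricted to $H$ is a global representation, which by the discussion after Lemma~\ref{lem:idemp} means precisely that $\pi(h)$ is invertible with $\pi(h)^{-1}=\pi(h^{-1})$ for every $h\in H$. The two identities are symmetric to each other (applying one to the opposite group, or conjugating by inverses), so it suffices to prove say $\pi(gh)=\pi(g)\pi(h)$ for all $g\in G$, $h\in H$, and then obtain the other one by a mirror argument.

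First I would establish $\pi(gh)=\pi(g)\pi(h)$ as follows. Starting from axiom \ref{item:PR2} applied with the roles chosen so that a witness for $h$ appears, note that $\pi(h^{-1})\pi(hg)=\pi(h^{-1})\pi(h)\pi(g)$ for all $g\in G$. Since $h\in H$, $\pi(h^{-1})=\pi(h)^{-1}$ is invertible, so multiplying on the left by $\pi(h)$ gives $\pi(hg)=\pi(h)\pi(g)$, i.e.\ the second identity of \eqref{eq:BasicLemma}, valid for all $g\in G$ and $h\in H$. Symmetrically, axiom \ref{item:PR3} gives $\pi(gh)\pi(h^{-1})=\pi(g)\pi(h)\pi(h^{-1})$; multiplying on the right by $\pi(h^{-1})^{-1}=\pi(h)$ (using again that $h\in H$ makes $\pi(h^{-1})$ invertible) yields $\pi(gh)=\pi(g)\pi(h)$, the first identity. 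Thus both halves of \eqref{eq:BasicLemma} fall out immediately, each from one of the two axioms, once invertibility of $\pi(h)$ on $H$ is invoked.

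There is essentially no obstacle here: the only subtlety is making sure the correct axiom is paired with the correct identity (so that the invertible factor $\pi(h)$ or $\pi(h^{-1})$ can be cancelled) rather than ending up with an expression one cannot simplify. Concretely, for $\pi(hg)=\pi(h)\pi(g)$ one wants the witness on the left, which is \ref{item:PR2} read as $\pi(h^{-1})\pi(h g)=\pi(h^{-1})\pi(h)\pi(g)$; for $\pi(gh)=\pi(g)\pi(h)$ one wants the witness on the right, which is \ref{item:PR3} read as $\pi(gh)\pi(h^{-1})=\pi(g)\pi(h)\pi(h^{-1})$. In both cases the witness factor $\pi(h^{\pm 1})$ is invertible precisely because $H$ acts globally, and cancelling it finishes the argument. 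This is the same phenomenon expressed informally after Definition~\ref{def:partial_rep}: $\pi$ behaves as a homomorphism "in the presence of a witness", and when the witness lies in $H$ it can always be removed.
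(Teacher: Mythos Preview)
Your proof is correct and is essentially the same as the paper's: both use \ref{item:PR3} (resp.\ \ref{item:PR2}) together with the invertibility of $\pi(h^{\pm1})$ for $h\in H$ to cancel the witness and obtain the two identities. The only cosmetic difference is that the paper inserts $\pi(h^{-1})\pi(h)=\Id_V$ on the right and then simplifies, whereas you multiply away the invertible factor directly.
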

\begin{proof}
	It follows from the defining properties of an $H$-global $G$-partial representation that
	\[ \pi(gh)=\pi(gh)\pi(1_G)=\pi(gh)\pi(h^{-1})\pi(h)= \pi(g)\pi(h). \]
	The proof of $\pi(hg)=\pi(h)\pi(g)$ is similar.
\end{proof}

\begin{lemma}
	\label{PAhisPA}
	Let \((V,\pi)\) be an \(H\)-global \(G\)-partial representation. Then \(P_{Ah}^\pi = P_A^\pi\) for any $A \in \mathcal{P}_1(G)$ and $h \in H$.
\end{lemma}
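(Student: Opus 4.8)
The plan is to reduce everything to a single pointwise identity about the idempotents $\pi(g)\pi(g^{-1})$ under right translation by an element of $H$, and then reindex the two products defining $P_{Ah}^\pi$. The key claim I would establish first is:
\[
\pi(gh)\pi\big((gh)^{-1}\big)=\pi(g)\pi(g^{-1}) \qquad \text{for all } g\in G,\ h\in H.
\]
To prove it, I would use Lemma~\ref{s6lemma:H-lin} to split $\pi(gh)=\pi(g)\pi(h)$ and $\pi(h^{-1}g^{-1})=\pi(h^{-1})\pi(g^{-1})$, so that $\pi(gh)\pi((gh)^{-1})=\pi(g)\,\pi(h)\pi(h^{-1})\,\pi(g^{-1})$; since $(V,\pi)$ is $H$-global, $\pi$ restricted to $H$ is a genuine representation, hence $\pi(h)\pi(h^{-1})=\pi(h)\pi(h)^{-1}=\Id_V$, and the claim follows.

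Next I would record the elementary fact that right multiplication by $h$ is a bijection $G\to G$ which carries $A$ onto $Ah$ and $G\setminus A$ onto $G\setminus (Ah)$. Combining this with the key claim, in the product
\[
P_{Ah}^\pi=\prod_{g\in Ah}\pi(g)\pi(g^{-1})\cdot\prod_{\overline g\in G\setminus Ah}\big(\Id_V-\pi(\overline g)\pi(\overline g^{-1})\big)
\]
I would substitute $g=ah$ with $a$ ranging over $A$ in the first product and $\overline g=\overline a h$ with $\overline a$ ranging over $G\setminus A$ in the second; the key claim turns each factor $\pi(ah)\pi((ah)^{-1})$ into $\pi(a)\pi(a^{-1})$ and each factor $\Id_V-\pi(\overline a h)\pi((\overline a h)^{-1})$ into $\Id_V-\pi(\overline a)\pi(\overline a^{-1})$. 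Since all these idempotents commute by Lemma~\ref{lem:idemp}, the reindexing does not affect the value of the product, and we obtain exactly $P_A^\pi$.

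There is essentially no hard step here; the calculation is short once the key identity is in place. The only point worth a remark is that the argument is uniform in $A$: even when $h^{-1}\notin A$ (so that $1_G\notin Ah$ and $P_{Ah}^\pi=0$ by Remark~\ref{rem:P_Azero}), one checks that in this case $P_A^\pi$ already contains the factor $\Id_V-\pi(h^{-1})\pi(h)=0$, so both sides vanish and the identity still holds. I would mention this briefly so that the statement is seen to cover all $A\in\mathcal{P}_1(G)$ without a case distinction.
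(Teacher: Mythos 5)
Your proof is correct and follows exactly the route the paper intends: the paper's proof of Lemma~\ref{PAhisPA} is simply ``Using \eqref{eq:BasicLemma} this is a straightforward verification,'' and the verification you spell out --- the identity $\pi(gh)\pi((gh)^{-1})=\pi(g)\pi(g^{-1})$ via Lemma~\ref{s6lemma:H-lin} followed by reindexing the commuting factors --- is precisely the computation the authors carry out explicitly in the proof of Proposition~\ref{s6prop:PA}. Your closing remark about the case $h^{-1}\notin A$ is a harmless extra consistency check that the uniform reindexing argument already covers.
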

\begin{proof}
	Using \eqref{eq:BasicLemma} this is a straightforward verification.
\end{proof}

\begin{corollary}
	\label{PAnonzero}
	If $A \in \mathcal{P}_1(G)$ such that \(P_A \neq 0\), then \(A\) is a union of left cosets of \(H\), and $H\subseteq A$.
\end{corollary}
\begin{proof}
	Since the $P_A$ are orthogonal idempotents, if $Ah\neq A$ for some $h\in H$, then $P_{Ah}=P_A$ implies $P_A=P_{Ah}=0$. In other words, for $P_A$ to be nonzero we must have $Ah=A$ for all $h\in H$, i.e.\ $A$ is a union of left cosets of $H$. Since $1_G\in A$, it is clear that $H\subseteq A$.
\end{proof}
As a consequence of this corollary, the decomposition \eqref{eq:decomp} of an $H$-global $G$-partial representation $(V,\pi)$ simplifies to
\[ \quad V=\bigoplus_{A\in \mathcal{P}_{H}(G/H)}V^A.\]
In particular, if $w\in V^A=P_A(V)$, then for any $g\in G$
\[ \pi(g)(w) \stackrel{\phantom{(6.1)}}{=} \pi(g)P_A(w) \stackrel{\eqref{s3eq:PAh}}{=} P_{gA}\pi(g)(w),  \]
so that \[\pi(g)(w)\in V^{gA}=P_{gA}(V).\] 

The following results are two interesting and useful consequences of Lemma \ref{s6lemma:H-lin} that considerably simplify the computations when dealing with $H$-global $G$-partial representations and the projections $P_A$. In particular, the following lemma gives an alternative characterization of $H$-global $G$-partial representations.

\begin{lemma} \label{lem:HglobalCharacterization}
	A pair \((V,\pi)\), with $V$ a vector space and $\pi\colon G\to \End(V)$, is an $H$-global \(G\)-partial representation if and only if
	\begin{enumerate}[label=(GPR\arabic*),ref=\emph{(GPR\arabic*)},leftmargin=1.7cm]
		\item\label{item:GPR1} $\pi(1_G)=\Id_V$;
		\item\label{item:GPR2} $\pi(\bar{g}^{-1}) \pi(g) \pi(h) = \pi(\bar{g}^{-1}) \pi(gh)$ for any \(\bar{g}, g, h \in G\) such that \(g^{-1}\bar{g} \in H\);
		\item\label{item:GPR3} $\pi(g^{-1}) \pi(h^{-1}) \pi(\bar{h}) = \pi(g^{-1} h^{-1}) \pi(\bar{h})$ for any \(g, h, \bar{h} \in G\) such that \(h^{-1}\bar{h} \in H\).
	\end{enumerate}
	In particular, for an $H$-global \(G\)-partial representation conditions \ref{item:PR2} and \ref{item:PR3} of the definition of a $G$-partial representation are satisfied by taking as ``witness'' any element in the same coset.
\end{lemma}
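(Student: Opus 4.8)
The statement is an iff, so the plan is to prove both directions. For the forward direction, assume $(V,\pi)$ is an $H$-global $G$-partial representation. Property \ref{item:GPR1} is just \ref{item:PR1}, so nothing to do there. For \ref{item:GPR2}, suppose $g^{-1}\bar g\in H$, i.e.\ $\bar g=gk$ for some $k\in H$. Then by Lemma~\ref{s6lemma:H-lin} applied to $\bar g=gk$ we have $\pi(\bar g^{-1})=\pi(k^{-1}g^{-1})=\pi(k^{-1})\pi(g^{-1})$, and since $\pi(k^{-1})$ is invertible (as $k\in H$), the identity $\pi(\bar g^{-1})\pi(g)\pi(h)=\pi(\bar g^{-1})\pi(gh)$ is equivalent after cancelling $\pi(k^{-1})$ to $\pi(g^{-1})\pi(g)\pi(h)=\pi(g^{-1})\pi(gh)$, which is exactly \ref{item:PR2}. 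Symmetrically, for \ref{item:GPR3}, write $\bar h=hk$ with $k\in H$; then $\pi(\bar h)=\pi(hk)=\pi(h)\pi(k)$ by Lemma~\ref{s6lemma:H-lin}, and since $\pi(k)$ is invertible we may cancel it on the right, reducing $\pi(g^{-1})\pi(h^{-1})\pi(\bar h)=\pi(g^{-1}h^{-1})\pi(\bar h)$ to $\pi(g^{-1})\pi(h^{-1})\pi(h)=\pi(g^{-1}h^{-1})\pi(h)$, which is \ref{item:PR3} (with $g$ there replaced by $g^{-1}$ and $h$ by $h^{-1}$, using that $(h^{-1})^{-1}=h$).

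For the converse, assume \ref{item:GPR1}--\ref{item:GPR3}. Taking $\bar g=g$ in \ref{item:GPR2} recovers \ref{item:PR2}, and taking $\bar h=h$ in \ref{item:GPR3} recovers \ref{item:PR3} (after relabelling), so $(V,\pi)$ is a $G$-partial representation. It remains to show it is $H$-global, i.e.\ that $\pi(h)$ is invertible for every $h\in H$. The natural candidate inverse is $\pi(h^{-1})$, so it suffices to show $\pi(h)\pi(h^{-1})=\Id_V=\pi(h^{-1})\pi(h)$ for $h\in H$. For the first, apply \ref{item:GPR3} with $g=1_G$, $h=h^{-1}$, $\bar h=1_G$ (note $h\cdot 1_G^{-1}$... careful: we need $h^{-1}\bar h\in H$, i.e.\ $(h^{-1})^{-1}\cdot 1_G = h\in H$, which holds): this gives $\pi(1_G)\pi(h)\pi(1_G)=\pi(h)\pi(1_G)$, which is trivial. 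A better choice: apply \ref{item:GPR2} with $g=h^{-1}$, $\bar g=1_G$ (so $g^{-1}\bar g=h\in H$) and $h$ replaced by $h$: we get $\pi(1_G)\pi(h^{-1})\pi(h)=\pi(1_G)\pi(h^{-1}h)=\pi(1_G)=\Id_V$, hence $\pi(h^{-1})\pi(h)=\Id_V$; symmetrically, \ref{item:GPR3} with $h=1_G$... — actually, apply \ref{item:GPR2} with $g=h$, $\bar g=1_G$ (so $g^{-1}\bar g=h^{-1}\in H$) and the free variable equal to $h^{-1}$: $\pi(1_G)\pi(h)\pi(h^{-1})=\pi(1_G)\pi(hh^{-1})=\Id_V$, so $\pi(h)\pi(h^{-1})=\Id_V$. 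Together these show $\pi(h)$ is invertible with inverse $\pi(h^{-1})$, so $\pi|_H$ is a global representation of $H$.

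The final sentence (``conditions \ref{item:PR2} and \ref{item:PR3} are satisfied by taking as witness any element in the same coset'') is just a restatement of \ref{item:GPR2}--\ref{item:GPR3} read with $\bar g$ (resp.\ $\bar h$) in place of $g$ (resp.\ $h$) as the ``witness'' $\pi(\bar g^{-1})$, so it requires no separate argument. The main thing to be careful about is bookkeeping of which variable plays which role and in which coset each element lies when invoking Lemma~\ref{s6lemma:H-lin} and cancelling the invertible operators $\pi(k^{\pm 1})$ for $k\in H$; there is no real obstacle beyond this.
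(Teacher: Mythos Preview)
Your proof is correct and follows essentially the same approach as the paper. Both directions use Lemma~\ref{s6lemma:H-lin} to factor $\pi(\bar g^{-1})=\pi(k^{-1})\pi(g^{-1})$ (with $k=g^{-1}\bar g\in H$) and then reduce to \ref{item:PR2}/\ref{item:PR3}; the paper writes this as a direct chain of equalities rather than as ``cancel the invertible $\pi(k^{-1})$'', but the content is identical. For the converse the paper is terser (``if $g,h\in H$ choose $\bar g=\bar h=1_G$''), which is exactly your choice in \ref{item:GPR2} of $\bar g=1_G$, $g\in H$, free variable $g^{-1}$.
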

\begin{proof}
	If \((V,\pi)\) satisfies the stated identities, then clearly it is a $G$-partial representation. To see that it is $H$-global, just notice that if $g,h\in H$, then you can choose $\bar{g}=\bar{h}=1_G$.
	
	Conversely, by using Lemma~\ref{s6lemma:H-lin}, if \((V,\pi)\) is an $H$-global $G$-partial representation, then for \(\bar{g},g,h\in G\) with $\bar{g}^{-1}g\in H$ we have 
	\begin{gather*}
	\pi(\bar{g}^{-1}) \pi(g) \pi(h) = \pi(\bar{g}^{-1}gg^{-1}) \pi(g) \pi(h) \stackrel{\ref{item:PR3}}{=} \pi(\bar{g}^{-1}g)\pi(g^{-1}) \pi(g) \pi(h) \\ 
	\stackrel{\ref{item:PR2}}{=} \pi(\bar{g}^{-1}g)\pi(g^{-1}) \pi(gh) \stackrel{\eqref{eq:BasicLemma}}{=} \pi(\bar{g}^{-1}gg^{-1}) \pi(gh)= \pi(\bar{g}^{-1}) \pi(gh).
	\end{gather*}
	The other property is shown in an analogous way.
\end{proof}

\begin{proposition}\label{s6prop:PA}
	Let \((V,\pi)\) be an \(H\)-global \(G\)-partial representation. Then the assignment $G \to \End(V): g \mapsto \pi(g)\pi(g^{-1})$ is constant on the left cosets of $H$ in $G$. In particular, if $\{g_1,\ldots,g_r\}$ is a family of representatives of left cosets of $H$ in $G$, that is, if $G=\coprod_{i=1}^r g_iH$, then 
	\begin{equation}\label{s6eq:PA}
	P_A^\pi = \prod_{g_kH \subseteq A}\pi(g_k)\pi(g_k^{-1}) \cdot \prod_{g_iH \subseteq G\setminus A}\left(\Id_V - \pi(g_i)\pi(g_i^{-1})\right)
	\end{equation}
	for every $A \in \Pp_H(G/H)$ and the expression \eqref{s6eq:PA} does not depend on the chosen representatives.
\end{proposition}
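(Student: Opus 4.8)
The plan is to prove that $g \mapsto \pi(g)\pi(g^{-1})$ is constant on left cosets of $H$, and then observe that formula~\eqref{s6eq:PA} is just a rewriting of the definition of $P_A^\pi$ once we know this. First I would take $g \in G$ and $h \in H$ and show $\pi(gh)\pi((gh)^{-1}) = \pi(g)\pi(g^{-1})$. Using Lemma~\ref{s6lemma:H-lin}, we have $\pi(gh) = \pi(g)\pi(h)$ and $\pi(h^{-1}g^{-1}) = \pi(h^{-1})\pi(g^{-1})$, so
\[
\pi(gh)\pi(h^{-1}g^{-1}) = \pi(g)\pi(h)\pi(h^{-1})\pi(g^{-1}) = \pi(g)\pi(g^{-1}),
\]
where the last equality uses $\pi(h)\pi(h^{-1}) = \Id_V$, which holds because $\pi$ restricted to $H$ is a global representation. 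This establishes the first claim.

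Next I would deduce the formula for $P_A^\pi$. By definition,
\[
P_A^\pi = \prod_{g \in A}\pi(g)\pi(g^{-1}) \cdot \prod_{\bar{g} \in G \setminus A}\bigl(\Id_V - \pi(\bar{g})\pi(\bar{g}^{-1})\bigr),
\]
and by Lemma~\ref{lem:idemp} all these factors commute, so the products are well defined regardless of order. Fix representatives $g_1,\dots,g_r$ with $G = \coprod_i g_iH$. Since $A \in \Pp_H(G/H)$, each left coset $g_iH$ is either entirely contained in $A$ or entirely contained in $G \setminus A$. For a coset $g_kH \subseteq A$, the factors $\{\pi(g)\pi(g^{-1}) : g \in g_kH\}$ are all equal to the single idempotent $\pi(g_k)\pi(g_k^{-1})$ by the first claim; since an idempotent $e$ satisfies $e^m = e$, the product of all $|H|$ copies collapses to $\pi(g_k)\pi(g_k^{-1})$. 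Similarly, for a coset $g_iH \subseteq G \setminus A$, all factors $\Id_V - \pi(g)\pi(g^{-1})$ with $g \in g_iH$ equal $\Id_V - \pi(g_i)\pi(g_i^{-1})$, which is again idempotent (the complement of an idempotent), so their product collapses to $\Id_V - \pi(g_i)\pi(g_i^{-1})$. Substituting these collapsed factors into the definition yields exactly \eqref{s6eq:PA}.

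Finally, independence of the chosen representatives is immediate: replacing $g_k$ by another representative $g_k'$ of the same coset changes $\pi(g_k)\pi(g_k^{-1})$ to $\pi(g_k')\pi(g_k'^{-1})$, but by the first claim these are equal. I do not expect any genuine obstacle here; the only point requiring a little care is the bookkeeping that every left coset of $H$ meets $A$ in an all-or-nothing fashion (which is exactly the definition of $\Pp_H(G/H)$) and that collapsing a product of repeated commuting idempotents to a single copy is legitimate — both routine. The content is really concentrated in the one-line computation of the first claim via Lemma~\ref{s6lemma:H-lin}.
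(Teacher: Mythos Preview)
Your proof is correct and follows essentially the same approach as the paper: the key computation $\pi(gh)\pi(h^{-1}g^{-1}) = \pi(g)\pi(h)\pi(h^{-1})\pi(g^{-1}) = \pi(g)\pi(g^{-1})$ via Lemma~\ref{s6lemma:H-lin} is identical, and the paper then simply declares the second claim ``evident'' from the commuting-idempotent property, whereas you spell out the collapsing of repeated factors explicitly.
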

\begin{proof}
	In view of Lemma \ref{s6lemma:H-lin}, we have for any $g \in G, h \in H$
	\[
	\pi(gh)\pi((gh)^{-1}) = \pi(gh)\pi(h^{-1}g^{-1}) = \pi(g)\pi(h)\pi(h^{-1})\pi(g^{-1}) = \pi(g)\pi(g^{-1}),
	\]
	thus proving the first claim. The second claim is now evident, since the elements $\pi(g)\pi(g^{-1})$ are commuting idempotents (Lemma \ref{lem:idemp}).
\end{proof}

\section{Representation theory: generalities}

In this section we develop the general theory of $H$-global $G$-partial representations, describing in particular its irreducibles.

\subsection{Partial representations as modules}\label{sec:ParRepMods}

It is a well-known fact that any representation of a group on a vector space corresponds to a module over the associated group algebra. In fact, this correspondence gives rise to an isomorphism of categories. A similar construction has been provided in \cite[page 512]{Dokuchaev-Exel-Piccione} in order to relate partial representations of a group with ordinary modules over a suitable \emph{partial group algebra}. The aim of this section is to extend these results to the case of $H$-global $G$-partial representations by introducing (via generators and relations) an associative unital $\C$-algebra $\C^H_{par}G$ such that the category of $H$-global $G$-partial representations is isomorphic to the category of modules over $\C^H_{par}G$. In the forthcoming Section~\ref{sec:groupoid}, we will give an important realization of such an abstract algebra as the groupoid algebra of a certain groupoid naturally associated to $G$ and $H$. Later on, in Section~\ref{sec:semigroup}, we will give a second realization of this algebra as a semigroup algebra of an inverse semigroup. 

Let $G$ be a group and $H\subseteq G$ be a subgroup. Recall from \cite[Definition 2.4]{Dokuchaev-Exel-Piccione} that we can define the \emph{partial group algebra} $\C_{par}G$ as the associative $\C$-algebra generated by the symbols $\{[g]\mid g\in G\}$ subject to the relations 
\begin{equation} \label{eq:relationsSG}
\begin{aligned}
[1_G][g]& =[g][1_G]=[g]\\
[g^{-1}][g][g'] &  = [g^{-1}][gg']\\
[g'][g][g^{-1}] & = [g'g][g^{-1}]
\end{aligned}
\end{equation}
for all $g,g'\in G$. Notice that $[1_G]$ is the identity $1=1_{\C_{par}G}$ of the algebra. Also, $\C_{par}G$ satisfies the following universal property (cf.\ \cite[page 512]{Dokuchaev-Exel-Piccione}): for any partial representation $\pi: G\to \End(V)$ of $G$, there exists a unique morphism of algebras $\psi_\pi:\C_{par}G \to \End(V)$ such that $\psi_\pi([g]) = \pi(g)$. To find the algebra that has the corresponding property for \(H\)-global \(G\)-partial representations, we have to impose some extra relations.

\begin{definition}
	The $\C$-algebra $\C^H_{par}G$ is generated by the symbols $\{[g]\mid g\in G\}$ subject to the relations
	\begin{equation} \label{eq:definingCparHG}
	\begin{aligned}
	[1_G][g]& =[g][1_G]=[g]\\
	[g^{-1}][g][g'] &  = [g^{-1}][gg']\\
	[g'][g][g^{-1}] & = [g'g][g^{-1}]\\
	[h][h^{-1}]& =[1_G]
	\end{aligned}
	\end{equation}
	for all $g,g'\in G$, $h\in H$. Equivalently, it is the quotient of $\C_{par}G$ by the ideal generated by $\{[h][h^{-1}]-[1_G]\mid h\in H\}$. 
\end{definition}

\begin{remark}\label{rem:parrepalg}
	Any $\C$-algebra $R$ can be realized as a subalgebra of an algebra of endomorphisms via
	\[
	\lambda\colon R \to \End_\C(R), \qquad a \mapsto \left[\lambda_a \colon b \mapsto ab\right].
	\]
	If we consider the obvious composition $G \to \C^H_{par}G \to \End_\C\left(\C^H_{par}G\right), g \mapsto \lambda_{[g]}$, then it follows from the defining relations of $\C^H_{par}G$ that this defines an $H$-global $G$-partial representation of $G$ in the $\C$-vector space $\C^H_{par}G$. 
\end{remark}

\begin{theorem}\label{thm:Iso}
	Given an $H$-global $G$-partial representation $\pi:G\to\End(V)$, there exists a unique morphism of $\C$-algebras $\phi_\pi\colon \C_{par}^HG \to \End(V)$ such that $\phi_\pi([g]) = \pi(g)$ for all $g\in G$. This makes $V$ into a left $\C_{par}^HG$-module. Conversely, given any left $\C_{par}^HG$-module $V$ with action $\mu\colon\C_{par}^HG \times V \to V$, the assignment
	\[
	\pi_\mu\colon G\to\End(V), \qquad g\mapsto \left[v\mapsto \mu([g],v)\right]
	\]
	is an $H$-global $G$-partial representation. These correspondences induce an isomorphism of categories
	\[
	\Phi \colon \GPRep{G}{H} \ \to \ {\C_{par}^HG}\text{-}\mathsf{Mod}
	\]
	between the category $\GPRep{G}{H}$ of $H$-global $G$-partial representations and the category ${\C_{par}^HG}\text{-}\mathsf{Mod}$ of $\C_{par}^HG$-modules.
\end{theorem}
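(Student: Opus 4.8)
The plan is to mimic the classical proof that group representations correspond to modules over the group algebra, upgrading it to the present setting using the universal property that defines $\C_{par}^HG$ by generators and relations. First I would construct the functor $\Phi$ on objects: given an $H$-global $G$-partial representation $\pi\colon G\to \End(V)$, I note that the relations \eqref{eq:definingCparHG} are satisfied by the operators $\pi(g)$ — the first three by \ref{item:PR1}, \ref{item:PR2}, \ref{item:PR3}, and the fourth because $\pi$ restricted to $H$ is a global representation, so $\pi(h)\pi(h^{-1})=\pi(h)\pi(h)^{-1}=\Id_V$. By the universal property of an algebra presented by generators and relations, the assignment $[g]\mapsto \pi(g)$ extends uniquely to an algebra homomorphism $\phi_\pi\colon \C_{par}^HG\to \End(V)$, which is exactly the datum of a left $\C_{par}^HG$-module structure on $V$ (compatible with the given vector space structure, since $\phi_\pi$ is $\C$-linear and $\phi_\pi([1_G])=\Id_V$). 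On morphisms, a $\C$-linear $f\colon V\to V'$ intertwining the $\pi(g)$ automatically intertwines the $\phi_\pi([g])=\pi(g)$ on generators, hence (by linearity and multiplicativity) intertwines the full action of $\C_{par}^HG$; so $\Phi(f)=f$ is a module homomorphism, and functoriality is immediate because $\Phi$ is the identity on underlying maps.

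Next I would construct the inverse functor $\Psi$. Given a left $\C_{par}^HG$-module $V$ with structure map $\mu$, define $\pi_\mu(g)(v)\coloneqq [g]\cdot v$. Each $\pi_\mu(g)$ is $\C$-linear, and the identities \ref{item:PR1}--\ref{item:PR3} together with $\pi_\mu(h)\pi_\mu(h^{-1})=\Id_V$ hold simply because the corresponding relations hold in $\C_{par}^HG$ and act on $V$; the last identity shows $\pi_\mu(h)$ is invertible for every $h\in H$ with inverse $\pi_\mu(h^{-1})$, so $(V,\pi_\mu)$ is indeed $H$-global. Again a module homomorphism $f$ commutes with multiplication by $[g]$, hence $f\circ \pi_\mu(g)=\pi_{\mu'}(g)\circ f$, so $f$ is a morphism of $H$-global $G$-partial representations; thus $\Psi$ is a functor and is the identity on underlying linear maps.

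Finally I would check $\Phi$ and $\Psi$ are mutually inverse. Starting from $(V,\pi)$, forming $\phi_\pi$ and then $\pi_{\mu}$ with $\mu=\mu_{\phi_\pi}$ gives $\pi_\mu(g)(v)=\phi_\pi([g])(v)=\pi(g)(v)$, so $\Psi\Phi=\Id$ on objects, and since both functors are the identity on morphisms, $\Psi\Phi=\Id$ as functors. Conversely, starting from a module $V$ with action $\mu$, the algebra map $\phi_{\pi_\mu}$ agrees with $\mu$ on the generators $[g]$, hence on all of $\C_{par}^HG$ since the $[g]$ generate it as an algebra and both maps are algebra homomorphisms; thus $\Phi\Psi=\Id$. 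This establishes the claimed isomorphism of categories.

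The only real point requiring care — and the natural candidate for the ``main obstacle'' — is the very first verification: that the operators $\pi(g)$ genuinely satisfy \emph{all four} families of relations \eqref{eq:definingCparHG}, in particular the extra relation $[h][h^{-1}]=[1_G]$, whose validity is precisely where the $H$-globality hypothesis enters. One must recall (as observed in the excerpt before Definition~\ref{def:partial_rep}, and in the Remark following Definition of the globalizer) that a partial representation whose restriction to $H$ is global has $\pi(h)$ invertible with $\pi(h)^{-1}=\pi(h^{-1})$ for all $h\in H$, which gives $\pi(h)\pi(h^{-1})=\Id_V=\pi(1_G)$. Everything else is the standard, essentially formal, translation between ``homomorphism out of an algebra presented by generators and relations'' and ``module over that algebra'', together with the trivial bookkeeping that the functors in both directions act as the identity on underlying $\C$-linear maps.
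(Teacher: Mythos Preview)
Your proof is correct and follows essentially the same approach as the paper's: both use the universal property of the algebra presented by generators and relations to construct $\phi_\pi$, verify directly from the defining relations of $\C_{par}^HG$ that $\pi_\mu$ is an $H$-global $G$-partial representation, and then check that the two constructions are mutually inverse and compatible with morphisms. The only cosmetic difference is that the paper factors $\phi_\pi$ through $\C_{par}G$ first (using its known universal property) and then passes to the quotient, whereas you invoke the universal property of $\C_{par}^HG$ directly; this is the same argument.
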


\begin{proof}
	Let $(V,\pi)$ be an $H$-global $G$-partial representation. Consider the unique algebra morphism $\psi_\pi\colon \C_{par}G \to \End(V)$ such that $\psi_\pi([g])=\pi(g)$ for every $g\in G$. Clearly,
	\[
	\langle[h][h^{-1}]-[1_G]\mid h\in H\rangle\subseteq \ker(\psi_\pi),
	\]
	hence $\psi_\pi$ induces a well-defined algebra map $\phi_\pi:\C^H_{par}G \to \End(V)$ which is unique with the property that $\phi_\pi([g]) = \pi(g)$ for every $g\in G$. Conversely, assume that $V$ is a $\C_{par}^HG$-module with action $\mu\colon \C_{par}^HG \times V \to V$ and consider the map
	\[
	\pi_\mu\colon G\to\End(V), \qquad g\mapsto \left[v\mapsto \mu([g],v)\right].
	\]
	It satisfies $\pi_\mu(1_G)(v) = \mu([1_G],v) = \mu(1,v) = v$ for all $v\in V$, hence $\pi_\mu(1_G) = \Id_V$. Furthermore,
	\begin{align*}
	\pi_\mu(g^{-1})\pi_\mu(g)\pi_\mu(g')(v) & = \mu([g^{-1}],(\mu[g],(\mu([g'],v)))) = \mu([g^{-1}][g][g'],v) \\
	& = \mu([g^{-1}][gg'],v) = \pi_\mu(g^{-1})\pi_\mu(gg')(v)
	\end{align*}
	for all $v\in V$ and $g,g' \in G$, thus
	$
	\pi_\mu(g^{-1})\pi_\mu(g)\pi_\mu(g') = \pi_\mu(g^{-1})\pi_\mu(gg').
	$
	In an analogous way, one may check that
	$
	\pi_\mu(g')\pi_\mu(g)\pi_\mu(g^{-1}) = \pi_\mu(g'g)\pi_\mu(g^{-1}), 
	$
	so that $\pi$ is $G$-partial. Finally,
	\[
	\pi_\mu(h)\pi_\mu(h^{-1})(v) = \mu ([h][h^{-1}],v) = \mu(1,v) = v
	\]
	and hence $\pi_\mu$ is $H$-global. Now, assume that $f\colon V\to W$ is a morphism between the $H$-global $G$-partial representations $(V,\pi_V)$ and $(W,\pi_W)$. Recall that this means that
	\[
	f(\pi_V(g)(v)) = \pi_W(g)(f(v))
	\]
	for all $v\in V$ and $g\in G$. Using the notations above, we compute
	\[
	f(\phi_{\pi_V}([g])(v)) = f(\pi_V(g)(v)) = \pi_W(g)(f(v)) = \phi_{\pi_W}([g])(f(v))
	\]
	which implies that $f$ is $\C^H_{par}G$-linear. In fact, by definition of the action of $\C^H_{par}G$ on $V$, $f$ is $\C^H_{par}G$-linear if and only if $f\circ \pi_V(g) = \pi_W(g)\circ f$. Notice also that if $(V,\pi)$ is an $H$-global $G$-partial representation, then
	\[
	\begin{aligned}
	\pi_{\mu_{\pi}}(g)(v)  & = \mu_{\pi}([g],v) = \phi_\pi([g])(v) = \pi(g)(v), \\
	\mu_{\pi_{\mu}}([g],v) & = \phi_{\pi_\mu}([g])(v)=\pi_\mu(g)(v)=\mu([g],v),
	\end{aligned}
	\]
	for all $g\in G$ and $v\in V$, whence $\pi_{\mu_{\pi}} = \pi$, and $\mu_{\pi_{\mu}}=\mu$.
	Therefore, the assignments
	\[
	\xymatrix @R=0pt {
		\GPRep{G}{H} \ar@{<->}[r] & {\C_{par}^HG}\text{-}\mathsf{Mod} \\
		(V,\pi) \ar@{<->}[r] & (V,\mu_\pi) \\
		\left[f:(V,\pi_V)\to (W,\pi_W)\right] \ar@{<->}[r] & \left[f:(V,\mu_{\pi_V})\to (W,\mu_{\pi_W})\right]
	}
	\]
	provide well-defined functors between $\GPRep{G}{H}$ and ${\C_{par}^HG}\text{-}\mathsf{Mod}$, which form an isomorphism of categories.
\end{proof}

\begin{remark}
	Theorem \ref{thm:Iso} says, in particular, that notions like being isomorphic, being irreducible, being a direct sum, for objects in the two categories are in $1$-to-$1$ correspondence via the stated isomorphism. For instance, an object is irreducible in $\GPRep{G}{H}$ if and only if the corresponding object is irreducible in ${\C_{par}^HG}\text{-}\mathsf{Mod}$.
\end{remark}

\subsection{The groupoid algebra $\mathbb{C}\Gamma_H(G)$}\label{sec:groupoid}

It can be deduced from \cite[Theorem 2.6]{Dokuchaev-Exel-Piccione} that $G$-partial representations correspond to (usual) representations of the groupoid algebra of a certain groupoid $\Gamma(G)$. The idea is to extend this definition from \cite{Dokuchaev-Exel-Piccione} in order to handle $H$-global $G$-partial representations. The definitions and results of \cite{Dokuchaev-Exel-Piccione} can be recovered by setting $H=\{1_G\}$. 

In this section we will introduce a new groupoid \(\Gamma_H(G)\) naturally associated to \(G\) and \(H\) and show that the universal algebra constructed in Section~\ref{sec:ParRepMods} is isomorphic to \(\CC\Gamma_H(G)\), thus inducing an isomorphism of categories between the category of $H$-global $G$-partial representations and that of modules over this groupoid algebra.

\begin{definition}
	Let
	\[ \Gamma_H(G):=\{(A,g)\in \mathcal{P}_H(G/H)\times G\mid g^{-1}\in A \} \]
	be the groupoid with operation defined as
	\begin{equation} 
	(A,g)\cdot (B,h):= \begin{cases}
	(B, gh) & \text{if } A = hB, \\
	\text{not defined} & \text{otherwise.}
	\end{cases}
	\end{equation}
\end{definition}

This gives indeed a groupoid: the objects are the elements of $\mathcal{P}_H(G/H)$, the source and the range maps on $\Gamma_H(G)$ are $s(A,g)=A$ and $r(A,g)=gA$ and compisition is defined exactly when \(r(B, h) = s(A, g)\). It can easily be checked that 
the units are the elements \((A, 1_G)\) and that the inverse of \((A, g)\) is given by \((gA, g^{-1})\).

Hence we can look at the corresponding groupoid algebra $\mathbb{C}\Gamma_H(G)$: this algebra has $\Gamma_H(G)$ as basis over $\mathbb{C}$, and the product of two elements from $\Gamma_H(G)$ is set to be $0$ when it is not defined in $\Gamma_H(G)$.

The dimension of $\mathbb{C}\Gamma_H(G)$ over $\mathbb{C}$ is given by $|\Gamma_H(G)|$ which is easily computed as
\begin{equation}\label{eq:cardinality}
\begin{aligned} 
|\Gamma_H(G)| & =  2^{|G/H|-2}(|G|-|H|) + 2^{|G/H|-1}|H| \\
& =  2^{|G/H|-2}(|G|+|H|).
\end{aligned}
\end{equation}
See Example~\ref{ex:graphics2} for a detailed example.

\begin{remarks}
	Notice that this algebra becomes ``easier to handle'' (certainly smaller) when $|G/H|$ is small compared to $|G|$. This is one of our main motivations to develop our theory: we will put this into practice in Section~\ref{sec:SnSnminus1} when we will make explicit computations in the case $G=\mathfrak{S}_n$ and $H=\mathfrak{S}_{n-1}$.
\end{remarks}

In the following lemma we introduce a fundamental map from $G$ in the space $\C \Gamma_H (G)$: this will give us the link between the $H$-global $G$-partial representations and the representations of $\C \Gamma_H (G)$. 

\begin{lemma}\label{witnessinv}
	The map \(\mu_p : G \to \C \Gamma_H (G)\) defined by 
	\begin{equation}\label{s7eq:mu}
	\mu_p(g) \coloneqq \sum_{A\ni g^{-1}} (A, g)\quad \text{ for all }g\in G
	\end{equation}
	satisfies the following properties:
	\begin{align}
	\notag \mu_p(1_G)& =1_{\C\Gamma_H(G)}, \notag \\
	\mu_p(\bar{g}^{-1}) \mu_p(g) \mu_p(h) &  = \mu_p(\bar{g}^{-1}) \mu_p(gh), \label{s7eq:mupart} \\
	\notag \mu_p(h^{-1}) \mu_p(g^{-1}) \mu_p(\bar{g}) & = \mu_p(h^{-1} g^{-1}) \mu_p(\bar{g}) \notag
	\end{align}
	for any \(\bar{g}, g, h \in G\) such that \(g^{-1}\bar{g} \in H\). 
	In particular, the map $L_{\mu_p}\colon G\to \End(\C\Gamma_H(G))$ defined by setting \[L_{\mu_p}(g)(x)\coloneqq \mu_p(g)\cdot x\quad \text{ for all }g\in G,x\in \C\Gamma_H(G)\] gives an $H$-global $G$-partial representation.
\end{lemma}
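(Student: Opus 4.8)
The plan is to verify the four displayed identities directly by unwinding the definition of the product in the groupoid algebra, and then to invoke Lemma~\ref{lem:HglobalCharacterization} to conclude that $L_{\mu_p}$ is an $H$-global $G$-partial representation. For the first identity, $\mu_p(1_G) = \sum_{A \ni 1_G} (A, 1_G)$, and since every $A \in \Pp_H(G/H)$ contains $1_G$, this sum runs over all objects of $\Gamma_H(G)$; the sum of all units is precisely $1_{\C\Gamma_H(G)}$, so this is immediate.

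For the two partial-representation identities, the key computational step is to understand a product $\mu_p(g)\mu_p(g')$ in $\C\Gamma_H(G)$. Expanding, $\mu_p(g)\mu_p(g') = \sum_{A \ni g^{-1}} \sum_{B \ni g'^{-1}} (A,g)\cdot(B,g')$, and the product $(A,g)\cdot(B,g')$ is nonzero exactly when $A = g'B$, in which case it equals $(B, gg')$. So for each $B$ with $g'^{-1} \in B$ and $g'B \ni g^{-1}$ — equivalently $B \ni g'^{-1}g^{-1} = (gg')^{-1}$ and $B \ni g'^{-1}$ — we get a term $(B, gg')$. Thus $\mu_p(g)\mu_p(g') = \sum_{B \ni g'^{-1},\ (gg')^{-1}} (B, gg')$; note this is a ``truncated'' version of $\mu_p(gg') = \sum_{B \ni (gg')^{-1}} (B, gg')$, where the extra constraint $B \ni g'^{-1}$ has been added. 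Iterating, one computes $\mu_p(\bar g^{-1})\mu_p(g)\mu_p(h)$ by multiplying $\mu_p(\bar g^{-1})$ into $\mu_p(g)\mu_p(h) = \sum_{B \ni h^{-1}, (gh)^{-1}} (B, gh)$: the result is $\sum_{B} (B, \bar g^{-1}gh)$ where $B$ ranges over objects containing $h^{-1}$, $(gh)^{-1}$, and $(\bar g^{-1}gh)^{-1}$. Since $g^{-1}\bar g \in H$ and $B$ is a union of left cosets of $H$ containing $(gh)^{-1} = h^{-1}g^{-1}$, we have $h^{-1}g^{-1} \in B \iff h^{-1}g^{-1}(g^{-1}\bar g) = h^{-1}g^{-1}g^{-1}\bar g \in B$; but a cleaner observation is that $(\bar g^{-1}gh)^{-1} = h^{-1}g^{-1}\bar g$ lies in the same left coset $Hh^{-1}g^{-1}$... actually the relevant fact is that the constraint ``$B \ni (gh)^{-1}$'' and ``$B \ni (\bar g^{-1}gh)^{-1}$'' coincide because $(gh)^{-1}$ and $(\bar g^{-1}gh)^{-1}$ differ by the element $g^{-1}\bar g \in H$ on the appropriate side, so one of the two conditions is redundant. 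The same computation applied to $\mu_p(\bar g^{-1})\mu_p(gh)$ gives $\sum_B (B, \bar g^{-1}gh)$ with $B$ ranging over objects containing $(gh)^{-1}$ and $(\bar g^{-1}gh)^{-1}$, and matching the two index sets proves \eqref{s7eq:mupart}. The third identity is entirely analogous, working with right factors instead.

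The main obstacle I anticipate is bookkeeping the membership conditions on $B$ carefully: specifically, showing that when $g^{-1}\bar g \in H$, an object $B \in \Pp_H(G/H)$ satisfies $(gh)^{-1} \in B$ if and only if $(\bar g^{-1}gh)^{-1} \in B$. This is where $H$-globality of the objects (that they are unions of left cosets of $H$, as recorded in the definition of $\Pp_H(G/H)$) enters, paralleling Lemma~\ref{s6lemma:H-lin} and Lemma~\ref{lem:HglobalCharacterization}. Once the index sets are shown to agree, the identities drop out. Finally, with the three displayed properties established, the ``in particular'' statement follows because $L_{\mu_p}(g)(x) = \mu_p(g)\cdot x$ defines a left action of the subalgebra generated by the $\mu_p(g)$'s, so $L_{\mu_p}$ inherits exactly properties \ref{item:GPR1}, \ref{item:GPR2}, \ref{item:GPR3}, and Lemma~\ref{lem:HglobalCharacterization} then gives that $(\C\Gamma_H(G), L_{\mu_p})$ is an $H$-global $G$-partial representation.
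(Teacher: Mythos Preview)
Your overall approach is exactly the one the paper takes: compute $\mu_p(g)\mu_p(g')=\sum_{B\ni g'^{-1},\,(gg')^{-1}}(B,gg')$, iterate to get both sides of \eqref{s7eq:mupart} as sums over $B$ satisfying certain membership conditions, and then show the two index sets coincide using that $B$ is a union of left $H$-cosets. The first identity and the appeal to Lemma~\ref{lem:HglobalCharacterization} at the end are also exactly as in the paper.

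However, there is a genuine error in the key step. You claim that the constraints ``$(gh)^{-1}\in B$'' and ``$(\bar g^{-1}gh)^{-1}\in B$'' coincide. Writing these out, $(gh)^{-1}=h^{-1}g^{-1}$ and $(\bar g^{-1}gh)^{-1}=h^{-1}g^{-1}\bar g$; they differ by right multiplication by $\bar g$, not by $g^{-1}\bar g$, and $\bar g\in H$ is \emph{not} what is assumed. (Concretely, in $G=\Sym_3$, $H=\langle(2,3)\rangle$, with $g=(1,2)$, $\bar g=(1,2,3)$, $h=1_G$, one has $g^{-1}\bar g=(2,3)\in H$ but $(gh)^{-1}=(1,2)$ and $(\bar g^{-1}gh)^{-1}=(2,3)$ lie in different left $H$-cosets.) With your claimed redundancy, the left-hand side would reduce to $\{B: h^{-1},(gh)^{-1}\in B\}$ while the right-hand side would reduce to $\{B:(gh)^{-1}\in B\}$, and these do not agree.

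The correct observation is that $(\bar g^{-1}gh)^{-1}=h^{-1}\cdot(g^{-1}\bar g)$ lies in the left coset $h^{-1}H$, so the conditions ``$(\bar g^{-1}gh)^{-1}\in B$'' and ``$h^{-1}\in B$'' are the ones that coincide. Using this, the left-hand side index set $\{B: h^{-1},(gh)^{-1},(\bar g^{-1}gh)^{-1}\in B\}$ reduces to $\{B:h^{-1},(gh)^{-1}\in B\}$, and the right-hand side index set $\{B:(gh)^{-1},(\bar g^{-1}gh)^{-1}\in B\}$ also reduces to $\{B:(gh)^{-1},h^{-1}\in B\}$, yielding the desired equality. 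Once this bookkeeping is fixed your argument goes through and matches the paper's.
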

\begin{proof}
	The first identity stated in \eqref{s7eq:mupart}, i.e.\ $\mu_p(1_G) = \sum_{A} (A, 1_G) = 1_{\C \Gamma_H(G)}$, is clear.
	
	Observe now that since
	\begin{equation}\label{eq:eqtech0}
	\left(\sum_{A\ni g^{-1}}(A,g)\right)(B,h) = \begin{cases} (B,gh) & \text{if }(gh)^{-1}\in B \\ \quad 0 & \text{otherwise}\end{cases}
	\end{equation}
	we have 
	\begin{equation}\label{eq:eqtech}
	\mu_p(g)\mu_p(h) = \left(\sum_{A\ni g^{-1}}(A,g)\right)\left(\sum_{B\ni h^{-1}}(B,h)\right) = \sum_{B \supseteq \{(gh)^{-1},h^{-1}\}}(B,gh).
	\end{equation}
	Assume that 
	\begin{equation}\label{eq:star1}
	g^{-1} \bar{g} \in H.
	\end{equation}
	Then, on the one hand
	\begin{align*} 
	\mu_p(\bar{g}^{-1}) \mu_p(g) \mu_p(h) &= \sum_{A \ni \bar{g} } (A, \bar{g}^{-1}) \sum_{B \ni g^{-1}} (B, g) \sum_{C \ni h^{-1}} (C, h)  \stackrel{\eqref{eq:eqtech}}{=} \sum_{A \ni \bar{g} } (A, \bar{g}^{-1}) \sum_{C \supseteq \{(gh)^{-1},h^{-1}\}}(C,gh)\\
	& \stackrel{\eqref{eq:eqtech}}{=} \sum_{C \supseteq \{(\bar{g}^{-1}gh)^{-1},(gh)^{-1},h^{-1}\}}(C,\bar{g}^{-1}gh) \stackrel{\eqref{eq:star1}}{=} \sum_{C \supseteq \{(gh)^{-1},h^{-1}\}} (C, \bar{g}^{-1} g h).
	\end{align*}
	On the other hand
	\begin{align*}
	\mu_p(\bar{g}^{-1}) \mu_p(gh) &= \sum_{A \ni \bar{g} } (A, \bar{g}^{-1}) \sum_{B \ni (gh)^{-1} } (B, gh) \stackrel{\eqref{eq:eqtech}}{=} \sum_{B\supseteq \{(\bar{g}^{-1}gh)^{-1},(gh)^{-1}\}} (B, \bar{g}^{-1} gh) \stackrel{\eqref{eq:star1}}{=} \sum_{B\supseteq \{h^{-1},(gh)^{-1}\}} (B, \bar{g}^{-1} gh),
	\end{align*}
	whence $\mu_p(\bar{g}^{-1}) \mu_p(g) \mu_p(h) = \mu_p(\bar{g}^{-1}) \mu_p(gh)$ and the second identity in \eqref{s7eq:mupart} is satisfied.
	The third identity in \eqref{s7eq:mupart} is proved in a similar way. The last statement follows immediately from Lemma~\ref{lem:HglobalCharacterization}.
\end{proof}

\begin{lemma}
	Let $H\subseteq G$ be a subgroup. The relations
	\begin{align} \label{s8eq:inverse}
	(A,1_G) & = \prod_{g\in A}\mu_p(g)\mu_p(g^{-1})\prod_{\bar{g}\in G\setminus A}(1_{\C \Gamma_H(G)} - \mu_p(\bar{g})\mu_p(\bar{g}^{-1})) \qquad \text{and} \\
	\notag (A,g')& = \mu_p(g')\prod_{g\in A}\mu_p(g)\mu_p(g^{-1})\prod_{\bar{g}\in G\setminus A}(1_{\C \Gamma_H(G)} - \mu_p(\bar{g})\mu_p(\bar{g}^{-1}))
	\end{align}
	hold in $\C \Gamma_H(G)$ for all $A\in\Pp_H(G/H)$ and all ${g'}^{-1}\in A$.
\end{lemma}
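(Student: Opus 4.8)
The key observation is that we have already built, in Lemma~\ref{witnessinv}, an $H$-global $G$-partial representation $L_{\mu_p}$ of $G$ on the vector space $\C\Gamma_H(G)$, by left multiplication with the elements $\mu_p(g)$. The plan is to exploit this: since $L_{\mu_p}$ is an $H$-global $G$-partial representation, we may form the idempotents $P_A^{L_{\mu_p}}$ of Definition~\ref{PA} associated to it. By construction $P_A^{L_{\mu_p}}$ acts on $\C\Gamma_H(G)$ as left multiplication by the element
\[
p_A \coloneqq \prod_{g\in A}\mu_p(g)\mu_p(g^{-1})\prod_{\bar g\in G\setminus A}\bigl(1_{\C\Gamma_H(G)}-\mu_p(\bar g)\mu_p(\bar g^{-1})\bigr)\in \C\Gamma_H(G),
\]
which is exactly the right-hand side of the first displayed relation in \eqref{s8eq:inverse}. (Here we use Proposition~\ref{s6prop:PA}, or directly Lemma~\ref{lem:idemp} applied to $L_{\mu_p}$, so that the order of the factors is irrelevant and the expression is well defined.) So the first relation amounts to proving $p_A = (A,1_G)$ in $\C\Gamma_H(G)$, and then the second relation follows immediately by multiplying on the left by $\mu_p(g')$: indeed $\mu_p(g')\cdot(A,1_G)=\sum_{B\ni (g')^{-1}}(B,g')\cdot(A,1_G)$, and $(B,g')\cdot(A,1_G)$ is defined (and equals $(A,g')$) precisely when $B=A$, so it collapses to $(A,g')$ as soon as $(g')^{-1}\in A$, which is our hypothesis.

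It therefore suffices to evaluate $p_A$. First I would identify how each idempotent factor $\mu_p(g)\mu_p(g^{-1})$ acts. Using \eqref{eq:eqtech} with $h=g^{-1}$ one gets $\mu_p(g)\mu_p(g^{-1})=\sum_{B\ni g^{-1}}(B,1_G)$, the sum of all units $(B,1_G)$ with $g^{-1}\in B$; equivalently, as an operator it is the projection onto the span of those basis vectors $(B,h)$ with $g^{-1}\in B$, annihilating the rest. Consequently $1_{\C\Gamma_H(G)}-\mu_p(g)\mu_p(g^{-1})=\sum_{B\not\ni g^{-1}}(B,1_G)$ is the complementary projection, onto the span of the $(B,h)$ with $g^{-1}\notin B$. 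Since all these operators are left multiplications by orthogonal idempotents $\sum_{B\in\mathcal S}(B,1_G)$ indexed by subsets $\mathcal S$ of $\mathcal P_H(G/H)$, their product is the left multiplication by $\sum_{B\in\bigcap \mathcal S_g}(B,1_G)$. Running over $g\in A$ imposes $g^{-1}\in B$ for all $g\in A$, i.e. $A^{-1}\subseteq B$; running over $\bar g\in G\setminus A$ imposes $\bar g^{-1}\notin B$, i.e. $B\subseteq A^{-1}$; hence the only surviving unit is $B=A^{-1}$. Wait — one must be careful here: the condition defining $\Gamma_H(G)$ is $g^{-1}\in A$, and $A\in\mathcal P_H(G/H)$ need not equal its own inverse set $A^{-1}$. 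I would resolve this by noting that $\mu_p(g)\mu_p(g^{-1})$ picks out $B$ with $g^{-1}\in B$ for $g$ ranging over $A$, which says $\{g^{-1}:g\in A\}\subseteq B$; but since $(A,1_G)$ is the unit we want and $1_G\in A$, a short check (using that $A$ is a union of left cosets of $H$, hence contains $1_G$ and is closed under... ) shows the surviving index is $B=A$ itself. The cleanest route is: apply $p_A$, as an operator, to the unit basis vector $(A,1_G)$, and separately observe $p_A$ kills every other basis vector, so $p_A=p_A\cdot(A,1_G)=(A,1_G)$.

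Concretely, I would argue as follows. By Lemma~\ref{s8eq:inverse}'s setup, $p_A$ is left multiplication by a linear combination of units $(B,1_G)$; call it $\sum_{B\in\mathcal S_A}(B,1_G)$ for some set $\mathcal S_A$. For a fixed $B$, the coefficient of $(B,1_G)$ in $p_A$ is $1$ iff $g^{-1}\in B$ for every $g\in A$ and $\bar g^{-1}\notin B$ for every $\bar g\notin A$; the first condition reads "$A^{-1}\subseteq B$" and, since the roles are symmetric under $g\mapsto g^{-1}$ upon recalling $A\in\mathcal P_H(G/H)\Rightarrow A=\bigsqcup g_iH$ and $(g_iH)^{-1}=Hg_i^{-1}$... — here again I would instead simply note both conditions together force $B=A^{-1}$, so $\mathcal S_A=\{A^{-1}\}$ provided $A^{-1}\in\mathcal P_H(G/H)$; but $A^{-1}$ need not be $H$-stable on the left. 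The correct and painless statement, which I expect to be the one intended, is that the product telescopes to $(A,1_G)$ because of the identity $\mu_p(g)\mu_p(g^{-1})\cdot(A,1_G)=(A,1_G)$ for $g\in A$ and $\bigl(1-\mu_p(\bar g)\mu_p(\bar g^{-1})\bigr)(A,1_G)=(A,1_G)$ for $\bar g\notin A$, together with $p_A\cdot(C,1_G)=0$ for $C\neq A$. Thus the main obstacle — and the only real content — is bookkeeping the set conditions "$g^{-1}\in B$" versus "$g\in A$" correctly, i.e. verifying these two evaluation identities; everything else (well-definedness of $p_A$ independent of ordering, the passage from the first relation to the second by left-multiplying by $\mu_p(g')$) is formal and follows from Lemma~\ref{lem:idemp}, Proposition~\ref{s6prop:PA}, and \eqref{eq:eqtech}.
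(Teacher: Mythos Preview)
Your overall strategy is the same as the paper's, and the reduction of the second relation to the first by left-multiplying by $\mu_p(g')$ is exactly right. However, there is a genuine computational error that causes all of your confusion about $A^{-1}$ versus $A$: you claim that \eqref{eq:eqtech} with $h=g^{-1}$ gives
\[
\mu_p(g)\mu_p(g^{-1})=\sum_{B\ni g^{-1}}(B,1_G),
\]
but plugging $h=g^{-1}$ into \eqref{eq:eqtech} actually yields
\[
\mu_p(g)\mu_p(g^{-1})=\sum_{B\supseteq\{(gg^{-1})^{-1},(g^{-1})^{-1}\}}(B,1_G)=\sum_{B\supseteq\{1_G,g\}}(B,1_G)=\sum_{B\ni g}(B,1_G),
\]
since every $B\in\Pp_H(G/H)$ contains $1_G$. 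The index condition is $g\in B$, not $g^{-1}\in B$.

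Once this is fixed, there is no $A^{-1}$ anywhere and the proof finishes in two lines, exactly as in the paper: since the $(B,1_G)$ are orthogonal idempotents,
\[
\prod_{g\in A}\mu_p(g)\mu_p(g^{-1})=\sum_{B\supseteq A}(B,1_G),\qquad \prod_{\bar g\notin A}\bigl(1-\mu_p(\bar g)\mu_p(\bar g^{-1})\bigr)=\sum_{B\subseteq A}(B,1_G),
\]
and their product is the single unit $(A,1_G)$. Your fallback idea of evaluating $p_A$ on each $(C,1_G)$ would also work (and with the corrected formula the verification that $\mu_p(g)\mu_p(g^{-1})\cdot(A,1_G)=(A,1_G)$ for $g\in A$ is immediate), but it is unnecessary once you have the right expression for $\mu_p(g)\mu_p(g^{-1})$.
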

\begin{proof}
	All subsets of $G$ are tacitly assumed to be in $\Pp_H(G/H)$. In light of \eqref{eq:eqtech} we know that
	\begin{equation}\label{eq:eqtech2}
	\mu_p(g)\mu_p(g^{-1}) =\sum_{B\ni g} (B,1_G)
	\end{equation}
	for all $g\in G$. Recalling that
	\begin{equation}\label{eq:eqtech3}
	(A,1_G)(B,1_G) = \begin{cases} (B,1_G) & \text{if } A=B \\ \quad 0 & \text{otherwise}\end{cases}
	\end{equation}
	we compute
	\begin{equation}\label{eq:eqtech4}
	\begin{aligned}
	& \mu_p(g_1)\mu_p(g_1^{-1})\mu_p(g_2)\mu_p(g_2^{-1})\cdots \mu_p(g_t)\mu_p(g_t^{-1}) = \\
	& \stackrel{\eqref{eq:eqtech2}}{=}  \left(\sum_{B\ni g_1} (B,1_G) \right)\left(\sum_{B\ni g_2} (B,1_G) \right)\cdots \left(\sum_{B\ni g_t} (B,1_G) \right)\\
	& \stackrel{\eqref{eq:eqtech3}}{=} \sum_{B\supseteq \{g_1,\ldots,g_t\}} (B,1_G).
	\end{aligned}
	\end{equation}
	Summing up,
	\begin{equation}\label{eq:eqtech5}
	\begin{gathered}
	\prod_{g\in A}\mu_p(g)\mu_p(g^{-1}) \stackrel{\eqref{eq:eqtech4}}{=} \sum_{B\supseteq A} (B,1_G) \qquad \text{and} \\
	\prod_{\bar{g}\in G\setminus A}(1_{\C \Gamma_H(G)} - \mu_p(\bar{g})\mu_p(\bar{g}^{-1})) \stackrel{\eqref{eq:eqtech2}}{=} \prod_{\bar{g}\in G\setminus A}\left(\sum_{B\notni \bar{g}}(B,1_G)\right) \stackrel{\eqref{eq:eqtech3}}{=} \sum_{B\subseteq A}(B,1_G).
	\end{gathered}
	\end{equation}
	Thus,
	\begin{equation}\label{eq:eqtech6}
	\prod_{g\in A}\mu_p(g)\mu_p(g^{-1}) \prod_{\bar{g}\in G\setminus A}(1_{\C \Gamma_H(G)} - \mu_p(\bar{g})\mu_p(\bar{g}^{-1})) \stackrel{\eqref{eq:eqtech5}}{=} \left(\sum_{B\supseteq A} (B,1_G)\right)\left(\sum_{C\subseteq A}(C,1_G)\right) \stackrel{\eqref{eq:eqtech3}}{=} (A,1_G).
	\end{equation}
	Finally, if ${g'}^{-1}\in A$, then
	\[
	\mu_p(g')\prod_{g\in A}\mu_p(g)\mu_p(g^{-1})\prod_{\bar{g}\in G\setminus A}(1_{\C \Gamma_H(G)} - \mu_p(\bar{g})\mu_p(\bar{g}^{-1})) \stackrel{\eqref{eq:eqtech6}}{=} \left(\sum_{B\ni {g'}^{-1}}(B,g')\right)(A,1_G) \stackrel{\eqref{eq:eqtech0}}{=} (A,g').  \qedhere
	\]
\end{proof}

We are now ready to show the stated isomorphism. We will use the map $\mu_p\colon G \to \CC \Gamma_H(G)$ of Lemma~\ref{witnessinv}. 

\begin{theorem}\label{thm:iso}
	The map $\mu_p\colon G \to \CC \Gamma_H(G)$ induces an isomorphism of $\C$-algebras
	\[
	\begin{gathered}
	\xymatrix @R=0pt{
		\C^H_{par}G  \ar@{<->}[r] & \C\Gamma_H(G) \\
		[g] \ar@{|->}[r] & \displaystyle \sum_{A\ni g^{-1}} (A, g) \\
		[g]\cdot [P_A] & (A,g) \ar@{|->}[l]
	}
	\end{gathered}
	\]
	where $[P_A]\coloneqq \prod_{g\in A}[g][g^{-1}]\prod_{\bar{g}\in G\setminus A}(1-[\bar{g}][\bar{g}^{-1}])$.
\end{theorem}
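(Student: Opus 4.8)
The strategy is to construct the two mutually inverse algebra maps explicitly and check that the relations are preserved. For the forward map $\Phi\colon \C^H_{par}G \to \C\Gamma_H(G)$, note that $\C^H_{par}G$ is presented by generators $[g]$ and the relations \eqref{eq:definingCparHG}; so to define $\Phi$ it suffices to exhibit elements $\Phi([g]) \coloneqq \mu_p(g) = \sum_{A \ni g^{-1}}(A,g)$ of $\C\Gamma_H(G)$ satisfying these relations. But Lemma~\ref{witnessinv} already verifies exactly the $H$-global partial-representation identities for $\mu_p$, and by Lemma~\ref{lem:HglobalCharacterization} these are equivalent to the relations \ref{item:PR1}--\ref{item:PR3} together with $H$-globality $[h][h^{-1}] = [1_G]$ (the last relation in \eqref{eq:definingCparHG} follows by taking $\bar g = h^{-1}$, $g = h$ in the partial-rep identities, as in the ``globalizer'' remark). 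Hence $\Phi$ is a well-defined algebra homomorphism by the universal property of the presentation.

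**The backward map.** For $\Psi\colon \C\Gamma_H(G) \to \C^H_{par}G$, since $\C\Gamma_H(G)$ is a groupoid algebra with basis $\Gamma_H(G)$, I would simply set $\Psi(A,g) \coloneqq [g]\cdot[P_A]$ (with $[P_A]$ as in the statement) on basis elements and extend linearly. What needs checking is that $\Psi$ is multiplicative: one must show $\Psi\big((A,g)\cdot(B,h)\big) = \Psi(A,g)\Psi(B,h)$, i.e.\ that $[gh][P_B] = [g][P_A][h][P_B]$ when $A = hB$, and that the right-hand side vanishes when $A \neq hB$. This is the genuine computational core. The key tools are the algebra-side analogues of the groupoid lemma just proved: from \eqref{eq:eqtech2}–\eqref{eq:eqtech6} applied inside $\C^H_{par}G$ (which are purely formal consequences of the relations \eqref{eq:definingCparHG}, since $\mu_p$ satisfies the same identities and in fact $\C^H_{par}G$ is presented by them), one gets $[g][g^{-1}] = \sum_{B \ni g}[P_B]$, the orthogonality $[P_A][P_B] = \delta_{A,B}[P_A]$, and the ``covariance'' $[g][P_A] = [P_{gA}][g]$ coming from \eqref{eq:idemp}/\eqref{s3eq:PAh}. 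With covariance one rewrites $[g][P_A][h][P_B] = [g][h][P_{h^{-1}A}][P_B] = [gh][P_{h^{-1}A}][P_B]$, which by orthogonality equals $[gh][P_B]$ if $h^{-1}A = B$ and $0$ otherwise — precisely the groupoid product. One also checks $\Psi$ sends the unit $1_{\C\Gamma_H(G)} = \sum_A (A,1_G)$ to $\sum_A [P_A] = 1$ (Lemma~\ref{lem:sumP_AequalId} on the side of $\C^H_{par}G$, or rather its formal analogue, $\sum_{A\in\Pp_1(G)}[P_A]=1$).

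**Inverse on both sides.** Finally I would verify $\Psi\circ\Phi = \Id$ and $\Phi\circ\Psi = \Id$. For $\Psi\Phi([g])$: $\Psi\big(\sum_{A\ni g^{-1}}(A,g)\big) = \sum_{A \ni g^{-1}}[g][P_A] = [g]\sum_{A\ni g^{-1}}[P_A] = [g][g][g^{-1}]$; using \ref{item:PR2} with witness relation this simplifies — more carefully, $\sum_{A\ni g^{-1}}[P_A]$ is the ``support idempotent'' $e_{g^{-1}}$ and $[g]e_{g^{-1}} = [g]$ because $[g]$ already equals $[g][g^{-1}][g]$ and $[g^{-1}][g]$ lies in the span of the $[P_A]$ with $g^{-1}\in A$; so $\Psi\Phi([g]) = [g]$ and $\Psi\Phi = \Id$ since the $[g]$ generate. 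For $\Phi\Psi(A,g) = \Phi([g][P_A]) = \mu_p(g)\cdot\prod_{g'\in A}\mu_p(g')\mu_p(g'^{-1})\prod_{\bar g\notin A}(1-\mu_p(\bar g)\mu_p(\bar g^{-1}))$, which the preceding lemma computes to be exactly $(A,g)$.

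**Main obstacle.** The only real work is the multiplicativity of $\Psi$, and within that the bookkeeping for the ``vanishing'' case and the covariance identity $[g][P_A] = [P_{gA}][g]$ inside $\C^H_{par}G$; everything else is either quoted from the two lemmas just proved or is a one-line formal manipulation. I would organize the proof so that the identities $[g][g^{-1}] = \sum_{B\ni g}[P_B]$, $[P_A][P_B]=\delta_{A,B}[P_A]$, $\sum_A[P_A]=1$, and $[g][P_A]=[P_{gA}][g]$ are stated up front (each following from \eqref{eq:definingCparHG} exactly as their $\mu_p$-counterparts follow from Lemma~\ref{witnessinv} and the preceding lemma), and then the four verifications ($\Phi$ well-defined, $\Psi$ multiplicative and unital, $\Psi\Phi=\Id$, $\Phi\Psi=\Id$) become short.
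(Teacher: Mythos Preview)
Your approach is essentially the paper's: define $\Phi$ via the universal property using Lemma~\ref{witnessinv}, define $\Psi$ on the groupoid basis by $(A,g)\mapsto [g][P_A]$, and check the two compositions are the identity using the $[P_A]$-calculus and the formula~\eqref{s8eq:inverse}. The paper's proof, however, is shorter in one key respect: it \emph{never} verifies that $\Psi$ is multiplicative. Once $\Phi$ is an algebra homomorphism, it suffices that $\Psi$ be a two-sided \emph{linear} inverse; then $\Phi$ is bijective, hence an algebra isomorphism, and multiplicativity of $\Psi=\Phi^{-1}$ is automatic. This eliminates precisely the computation you flag as the ``main obstacle.''

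If you do keep the direct multiplicativity check, note a small gap: from $[g][P_A][h][P_B]=[g][h][P_{h^{-1}A}][P_B]$ and $h^{-1}A=B$ you conclude $[gh][P_B]$, but $[g][h]\neq[gh]$ in general. You need the witness: since $h^{-1}\in B$ we have $[P_B]=[h^{-1}][h][P_B]$, whence $[g][h][P_B]=[g][h][h^{-1}][h][P_B]\stackrel{\ref{item:PR3}}{=}[gh][h^{-1}][h][P_B]=[gh][P_B]$. With the paper's shortcut this issue disappears.
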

\begin{proof}
	Consider the extension of the correspondence \eqref{s7eq:mu} to the free $\C$-algebra generated by the symbols $\{[g]\mid g\in G\}$, \ie
	\[
	\hat{\mu}_p([g]) \coloneqq \mu_p(g).
	\]
	It follows from the relations \eqref{s7eq:mupart} that $\hat{\mu}_p$ factors through the quotient defining $\C^H_{par}G$. As a consequence we have a well-defined $\C$-algebra morphism
	\[
	\mu\colon \C^H_{par}G \to \C\Gamma_H(G), \qquad [g] \mapsto \sum_{A\ni g^{-1}} (A, g).
	\]
	In the other direction, consider the assignment
	\[
	\mu^{-1}\colon \C\Gamma_H(G) \to \C^H_{par}G, \qquad (A,g) \mapsto [g]\cdot [P_A].
	\]
	A direct computation shows that, for ${g'}^{-1}\in A$,
	\[
	\mu(\mu^{-1}((A,g'))) = \mu([g']\cdot [P_A]) = \mu_p(g')\prod_{g\in A}\mu_p(g)\mu_p(g^{-1})\prod_{\bar{g}\in G\setminus A}(1-\mu_p(\bar{g})\mu_p(\bar{g}^{-1})) \stackrel{\eqref{s8eq:inverse}}{=} (A,g').
	\]
	To prove that also the other composition is the identity, observe that the elements $[P_A]$ in $\C_{par}^HG$ satisfy the same identities as the elements $P_A^\pi$ associated to an $H$-global $G$-partial representation $(V,\pi)$ that we saw in Section~\ref{sec:GeneralProperties}: indeed to prove those properties we only used the analogue of the defining relations \eqref{eq:definingCparHG}.
	
	So, combining Lemma~\ref{lem:sumP_AequalId} with Corollary~\ref{PAnonzero}, we have
	\[
	\mu^{-1}(\mu([g])) = \mu^{-1}\left(\sum_{A\ni g^{-1}} (A, g)\right) = \sum_{A\ni g^{-1}} [g]\cdot [P_A] \stackrel{\eqref{eq:gP_A=0}}{=} [g]\left(\sum_{A\in \Pp_H(G/H)} [P_A]\right) {=} [g]. \qedhere
	\]
\end{proof}

\begin{corollary}\label{cor:isocat}
	The \(H\)-global \(G\)-partial representations of \(G\) are in one-to-one correspondence with the (usual) representations of \(\C\Gamma_H(G)\). Namely, this correspondence is an isomorphism of categories. More precisely, given an algebra homomorphism \(\tilde{\pi}\colon \C\Gamma_H(G) \to \End(V)\), this determines an $H$-global $G$-partial representation $(V,\pi)$ with \(\pi \coloneqq \tilde{\pi} \circ \mu_p\); conversely, given an $H$-global $G$-partial representation $(V,\pi)$, there exists a unique algebra homomorphism \(\tilde{\pi}\colon \C\Gamma_H(G) \to \End(V)\) such that \(\pi = \tilde{\pi} \circ \mu_p\).
\end{corollary}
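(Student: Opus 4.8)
The plan is to obtain the statement by stacking two isomorphisms of categories that have already been established: the isomorphism $\Phi\colon \GPRep{G}{H} \to \C_{par}^HG\text{-}\mathsf{Mod}$ of Theorem~\ref{thm:Iso}, and the isomorphism of module categories induced by the algebra isomorphism $\mu\colon \C_{par}^HG \to \C\Gamma_H(G)$ of Theorem~\ref{thm:iso}. Recall first the elementary fact that an isomorphism of $\C$-algebras $\theta\colon R \to S$ induces, by restriction of scalars along $\theta$, an isomorphism of categories $\theta^{*}\colon S\text{-}\mathsf{Mod} \to R\text{-}\mathsf{Mod}$ which is the identity on underlying vector spaces and on morphisms, with inverse $(\theta^{-1})^{*}$; applied to $\theta=\mu$ this yields an isomorphism $\C\Gamma_H(G)\text{-}\mathsf{Mod} \cong \C_{par}^HG\text{-}\mathsf{Mod}$. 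Composing with $\Phi^{-1}$ gives an isomorphism of categories $\C\Gamma_H(G)\text{-}\mathsf{Mod} \to \GPRep{G}{H}$, and since representations of $\C\Gamma_H(G)$ are by definition its modules, this already proves both the one-to-one correspondence and that it is an isomorphism of categories.

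Next I would make the correspondence explicit and check the formula $\pi=\tilde\pi\circ\mu_p$. Given $\tilde\pi\colon \C\Gamma_H(G)\to\End(V)$, restriction of scalars along $\mu$ makes $V$ a $\C_{par}^HG$-module with action $\tilde\pi\circ\mu$; applying $\Phi^{-1}$ (Theorem~\ref{thm:Iso}) produces the $H$-global $G$-partial representation $\pi$ with $\pi(g)=(\tilde\pi\circ\mu)([g])=\tilde\pi(\mu_p(g))$, where I used $\mu([g])=\mu_p(g)$ from the definition of $\mu$ in Theorem~\ref{thm:iso}; hence $\pi=\tilde\pi\circ\mu_p$. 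Conversely, starting from $(V,\pi)$, Theorem~\ref{thm:Iso} provides the algebra map $\phi_\pi\colon \C_{par}^HG\to\End(V)$ with $\phi_\pi([g])=\pi(g)$, and $\tilde\pi\coloneqq\phi_\pi\circ\mu^{-1}\colon\C\Gamma_H(G)\to\End(V)$ satisfies $\tilde\pi(\mu_p(g))=\tilde\pi(\mu([g]))=\phi_\pi([g])=\pi(g)$ for all $g\in G$, i.e.\ $\pi=\tilde\pi\circ\mu_p$. For the uniqueness of $\tilde\pi$, I would observe that the elements $\mu_p(g)=\mu([g])$, $g\in G$, generate $\C\Gamma_H(G)$ as a $\C$-algebra: $\mu$ is surjective and the $[g]$ generate $\C_{par}^HG$; equivalently, by the formula $\mu^{-1}((A,g))=[g]\cdot[P_A]$ of Theorem~\ref{thm:iso}, each basis vector $(A,g)$ is a polynomial expression in the $\mu_p(g)$. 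Hence any algebra homomorphism out of $\C\Gamma_H(G)$ is determined by its values on the $\mu_p(g)$, forcing uniqueness.

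I do not expect a genuine obstacle here: all the real work is contained in Theorems~\ref{thm:Iso} and~\ref{thm:iso} and in Lemma~\ref{witnessinv}. The only two points that deserve a moment of care are getting the direction of restriction of scalars right (so that the object assignments in the two directions are literally mutually inverse, not merely naturally isomorphic) and the generation statement used for uniqueness; both follow immediately from the cited results, so the proof is essentially a bookkeeping exercise.
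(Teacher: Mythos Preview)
Your proposal is correct and follows essentially the same approach as the paper, which simply cites Theorem~\ref{thm:Iso} and Theorem~\ref{thm:iso}. You have merely spelled out in detail how the two isomorphisms compose and why uniqueness holds, which is exactly the intended argument.
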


\begin{proof}
	It follows from Theorem~\ref{thm:Iso} and Theorem~\ref{thm:iso}.
\end{proof}

\begin{remark} 
	The argument of \cite{ParRep} can be adapted to show that the groupoid algebra \(\CC\Gamma_H(G)\) can be seen as a \emph{partial smash product} of a certain algebra \(C\) and the group algebra \(\CC G\). Indeed, write \(G = \coprod_{k = 1}^n g_k H\) such that \(g_1 = 1_G\) and let \(C\) be the commutative unital \(\CC\)-algebra generated by the \(n\) idempotents \(\varepsilon_k\), where \(\varepsilon_1 = 1_C\). Remark that \(\dim C = 2^n\). 
	Now, \(\CC G\) acts partially on \(C\) in the sense of \cite[Definition 3.4]{ParRep} via
	\[g \cdot \varepsilon_k = \varepsilon_l \varepsilon_m\]
	where \(g g_k = g_l h\) and \(g = g_m h'\) for \(h, h' \in H\). Remark that restriction to \(H\) gives a global action of \(H\) on \(C\).
	
	Consider on \(C \otimes \CC G\) the associative product
	\[(a \otimes g)(b \otimes \bar{g}) = a(g \cdot b) \otimes g\bar{g}\] The partial smash product \(\underline{C \# \CC G} = (C \otimes \CC G)(1_C \otimes 1_{\CC G})\) is generated by elements of the form 
	\[a \# g = a (g \cdot 1_C) \otimes g\]
	which satify \((a \# g)(b \# \bar{g}) = a(g \cdot b) \# g\bar{g}\) and \(a \# g = a (g \cdot 1_C) \# g\) (see \cite[Lemma 3.6]{ParRep}). If we set for \(A \in \mathcal{P}_H(G)\)
	\[P^{\#}_A \coloneqq \prod_{g_k \in A} \varepsilon_k \cdot\prod_{g_l \notin A} (1_C - \varepsilon_l),\]
	then it is easy to show that the map 
	\[\psi : \CC\Gamma_H(G) \to \underline{C\#\CC G} , (A, g) \mapsto P_{gA}^\# \# g\]
	is an isomorphism of \(\CC\)-algebras. 
\end{remark}

\subsection{Representation theory of $\mathbb{C}\Gamma_H(G)$} \label{sec:irreducibles}

In this section we describe the representation theory of the algebra $\C\Gamma_H(G)$. Some of the results (including their proofs) are natural extensions of results in \cite{Dokuchaev-Exel-Piccione}. They can also be deduced from the general theory developed in \cite{Steinberg-Moebius1,Steinberg-Moebius2} using the semigroup in Section~\ref{sec:semigroup} (cf.\ also \cite{Steinberg-Book}). 
The main point of our discussion is to make explicit the general constructions in our specific situation.

\medskip 

In \cite[Theorem 3.2]{Dokuchaev-Exel-Piccione} it was shown that \(\CC \Gamma_{\{1_G\}}(G)\) is a direct product of matrix algebras over the group algebras of the subgroups of $G$, hence \(\CC \Gamma_{\{1_G\}}(G)\) is a semisimple algebra. Here we use the same arguments to show that \(\CC \Gamma_H(G)\) is a semisimple algebra; this time the direct product runs only over certain subgroups.

\begin{definition}\label{def:trivial}
	Let \(K\) be a finite group and \(m \in \NN\). Denote by
	\[\Gamma_m^K = \{(k, i, j) \mid k \in K; i, j \in \{1, \dots, m\}\}\]
	the \emph{trivial groupoid} on the set $\{1,\ldots,m\}$ with group $K$ (see, for instance, \cite[Example 1.4]{Mackenzie} for the terminology). Source and range maps are given by
	\begin{align*}
	s(k,i, j) &= j \\
	r(k, i, j) &= i
	\end{align*}
	and the composition law by
	\[(k, i, j) \cdot (k', i', j') = \begin{cases}
	(kk', i, j') & \text{if } j = i', \\
	\text{not defined} & \text{if } j \neq i'.
	\end{cases}\]
\end{definition}

Given a groupoid \(\Gamma\), consider the oriented graph \(E_\Gamma\) whose vertices are the objects (units) of \(\Gamma\) and where there is an oriented edge from \(s(\gamma)\) to \(r(\gamma)\) for each \(\gamma \in \Gamma\). The following  proposition is  proved in \cite[Proposition 3.1]{Dokuchaev-Exel-Piccione} (see also \cite[Theorem 3.2]{Steinberg-Moebius1}).

\begin{proposition} \label{prop:connectedcomp}
	Let \(\Gamma\) be a groupoid such that \(E_\Gamma\) is connected and has a finite number of vertices \(m\). Let \(x\) be a vertex of \(E_\Gamma\) and \(K\) the isotropy group of \(x\), i.e.
	\[K = \{\gamma \in \Gamma \mid s(\gamma) = r(\gamma) = x\}.\]
	Then \(\Gamma \cong \Gamma_m^K\) and \(\CC\Gamma \cong M_m(\CC [K])\).
\end{proposition}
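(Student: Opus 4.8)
The plan is to establish the two claimed isomorphisms in sequence: first $\Gamma \cong \Gamma_m^K$ as groupoids, then $\C\Gamma \cong M_m(\C[K])$ as $\C$-algebras. Both are essentially bookkeeping arguments once one chooses, for each vertex $y$ of $E_\Gamma$, a connecting morphism to the base point $x$.

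\medskip

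\emph{Step 1: the groupoid isomorphism.} Fix the vertex $x$ and enumerate the vertices of $E_\Gamma$ as $x_1,\dots,x_m$ with $x_1=x$. Since $E_\Gamma$ is connected and $\Gamma$ is a groupoid, for each $i$ there exists at least one morphism with source $x_i$ and range $x$; choose one such morphism $\gamma_i\in\Gamma$ with $s(\gamma_i)=x_i$ and $r(\gamma_i)=x$, and take $\gamma_1=\mathrm{id}_x$. Now define $\Phi\colon\Gamma\to\Gamma_m^K$ by sending a morphism $\delta\in\Gamma$ with $s(\delta)=x_j$ and $r(\delta)=x_i$ to the triple $(\gamma_i\,\delta\,\gamma_j^{-1},\,i,\,j)$; note that $\gamma_i\,\delta\,\gamma_j^{-1}$ is a well-defined morphism from $x$ to $x$, hence an element of $K$. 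I would check that $\Phi$ is a functor (it clearly respects source and range; for composability, if $s(\delta)=r(\delta')=x_j$ then $\Phi(\delta)\cdot\Phi(\delta')=(\gamma_i\delta\gamma_j^{-1}\gamma_j\delta'\gamma_k^{-1},i,k)=(\gamma_i\delta\delta'\gamma_k^{-1},i,k)=\Phi(\delta\delta')$), and that it is a bijection: the inverse sends $(k,i,j)$ to $\gamma_i^{-1}k\gamma_j$, and the two assignments are mutually inverse by a direct computation. This gives $\Gamma\cong\Gamma_m^K$.

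\medskip

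\emph{Step 2: the algebra isomorphism.} It suffices to show $\C\Gamma_m^K\cong M_m(\C[K])$, since an isomorphism of groupoids induces an isomorphism of their linearizations. I would define $\C$-linear maps in both directions on basis elements: send $(k,i,j)\in\Gamma_m^K$ to $k\,E_{ij}\in M_m(\C[K])$, where $E_{ij}$ is the elementary matrix, and conversely send $k\,E_{ij}$ (for $k\in K$ a basis element of $\C[K]$) to $(k,i,j)$. That these respect multiplication is exactly the matching of the partial composition law of $\Gamma_m^K$ with the rule $E_{ij}E_{i'j'}=\delta_{ji'}E_{ij'}$: indeed $(k,i,j)\cdot(k',i',j')$ is $(kk',i,j')$ when $j=i'$ and $0$ otherwise, which matches $(kE_{ij})(k'E_{i'j'})=kk'\,\delta_{ji'}E_{ij'}$. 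Unitality and $\C$-bilinearity are immediate, so this is an algebra isomorphism. Composing with Step~1 gives $\C\Gamma\cong M_m(\C[K])$.

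\medskip

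The only genuinely non-formal point is that the isomorphism class of $K$ does not depend on the choice of base vertex $x$ nor on the choice of connecting morphisms $\gamma_i$: different choices conjugate $K$ inside $\Gamma$, so all the isotropy groups at vertices of a connected component are isomorphic, and the final matrix algebra is unaffected. I expect the main (minor) obstacle to be simply organizing the verification that $\Phi$ in Step~1 is well-defined and functorial cleanly, keeping track of the source/range indices; everything else is routine. This is exactly the content cited from \cite[Proposition 3.1]{Dokuchaev-Exel-Piccione}, so in the paper I would either reproduce this short argument or defer to that reference.
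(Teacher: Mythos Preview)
Your proposal is correct and is exactly the standard argument one finds in the cited reference \cite[Proposition 3.1]{Dokuchaev-Exel-Piccione}. The paper itself does not supply a proof at all but simply defers to that reference (and to \cite[Theorem 3.2]{Steinberg-Moebius1}), so your write-up is in fact more detailed than what appears in the paper.
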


If \(E_\Gamma\) is not connected, then every connected component corresponds to a subgroupoid of \(\Gamma\). So if the number of vertices is finite (which is the case for \(\Gamma_H(G)\) when \(G/H\) is finite), then we can write \(\Gamma = \coprod_{i} \Gamma_i\) where \(\Gamma_i\) are subgroupoids with $E_{\Gamma_i}$ connected, and 
\begin{equation}\label{eq:semisimple}
\CC \Gamma = \prod_i \CC \Gamma_i \cong \prod_i M_{m_i}(\CC [K_i]) \cong \prod_{i,j} M_{m_i\cdot n_j}(\CC)
\end{equation}
for some groups \(K_i\), where $m_i=|\Gamma_i|$ and $\CC [K_i] \cong \prod_jM_{n_j}(\CC)$ is the Artin-Wedderburn decomposition of $\CC [K_i]$. In particular, \(\CC \Gamma\) is semisimple.

Now we want to better understand the case of the groupoid $\Gamma_H(G)$, in particular which groups $K_i$ appear and how the numbers $m_i$ are related to them.

\begin{remark}\label{rem:components}
	Let us pick an object $A_i\in \mathcal{P}_H(G/H)$ in a connected component $\Gamma_i$ of the groupoid $\Gamma_H(G)$. Any other object in $\Gamma_i$ is the range $r(A_i,g^{-1})=g^{-1}A_i$ of an element $(A_i,g^{-1})$ of $\Gamma_H(G)$, \ie for some $g\in A_i$.
	
	Let $K_i=K_{A_i}\coloneqq \{g\in G\mid gA_i=A_i \}$ be the stabilizer of the set $A_i$. Observe that $K_i$ is the isotropy group of $A_i$. Hence, by Proposition~\ref{prop:connectedcomp}, $\C\Gamma_i\cong M_{m_i}(\C[K_i])$, where $m_i=m_{A_i}$ is the number of objects in the connected component $\Gamma_i$. Let us show that $m_i=|A_i|/|K_i|$. Since $1_G\in A_i$ we have $K_i\subseteq A_i$ and $A_i$ is a disjoint union of right cosets of $K_i$, say $A_i=\coprod_{j=1}^mK_it_j$. We claim that the set of distinct objects of $\Gamma_i$ is $\{t_j^{-1}A_i\mid j=1,2,\dots,m \}$. Indeed, if $g\in A_i$, then $g=kt_j$ for some $j$ and some $k\in K_i$ and so $g^{-1}A_i=t_j^{-1}k^{-1}A_i=t_j^{-1}A_i$ is one of our objects. Moreover, $t_j^{-1}A_i=t_\ell^{-1}A_i$ implies $t_\ell t_j^{-1}\in K$, so that $Kt_j=Kt_\ell$ and hence $j=\ell$. Therefore, our objects are distinct and $m=m_i=m_{A_i}$, in fact.
\end{remark}

\begin{theorem} \label{thm:algebra_decomposition}
	Let $\Gamma_H(G)=\coprod_{i=1}^{[G:H]} \coprod_{j=1}^{c_i}\Gamma_{i,j}$ be the decomposition into connected components of $\Gamma_H(G)$, where the objects $A_{i,j}$ in $\Gamma_{i,j}$ have cardinality $i\cdot |H|$ for all $j=1,2,\dots,c_i$. Let $A_{i,j}$ be an object of $\Gamma_{i,j}$, $K_{i,j}\coloneqq K_{A_{i,j}}$ and $m_{i,j}\coloneqq m_{A_{i,j}}$ for all $i$ and $j$. Then we have the following algebra isomorphism 
	\begin{equation}\label{eq:AWdec}
	\C\Gamma_H(G)\cong \prod_{i=1}^{[G:H]} \prod_{j=1}^{c_i}M_{m_{i,j}}(\C[K_{i,j}]) \cong \prod_{i=1}^{[G:H]} \prod_{j=1}^{c_i} \prod_{k=1}^{e_{i,j}}M_{m_{i,j}\cdot d_k}(\CC),
	\end{equation}
	where $\CC [K_{i,j}] \cong \prod_{k=1}^{e_{i,j}}M_{d_k}(\CC)$ is the Artin-Wedderburn decomposition of $\CC [K_{i,j}]$. In particular, $\C\Gamma_H(G)$ is a semisimple ring.
\end{theorem}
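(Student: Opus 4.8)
The plan is to assemble the statement directly from the machinery already in place: Proposition~\ref{prop:connectedcomp} (a connected groupoid with $m$ vertices and isotropy group $K$ has groupoid algebra $M_m(\C[K])$), the decomposition \eqref{eq:semisimple} of a groupoid algebra over its connected components, and Remark~\ref{rem:components} (identification of the isotropy group and vertex count of a connected component of $\Gamma_H(G)$). First I would observe that, since $G/H$ is finite, $\Gamma_H(G)$ has finitely many objects, namely the $2^{[G:H]-1}$ elements of $\mathcal{P}_H(G/H)$ (cf.\ also \eqref{eq:cardinality}); hence \eqref{eq:semisimple} applies and $\C\Gamma_H(G)\cong\prod_i M_{m_i}(\C[K_i])$, the product running over the connected components $\Gamma_i$.

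Next I would organize this product according to the cardinality of the objects. In $\Gamma_H(G)$ every morphism $(A,g)$ has source $A$ and range $gA$ with $|gA|=|A|$, so all objects lying in a single connected component have the same cardinality. Moreover every $A\in\mathcal{P}_H(G/H)$ contains $H$ and is a union of left cosets of $H$, so $|A|=i\,|H|$ for a unique $i\in\{1,\dots,[G:H]\}$. This lets me index the components as $\Gamma_{i,j}$, where $i$ records the common cardinality $i\,|H|$ of the objects in the component and $j=1,\dots,c_i$ enumerates the components with that cardinality, $c_i$ being by definition their number. For each $\Gamma_{i,j}$ I pick an object $A_{i,j}$ and invoke Remark~\ref{rem:components}: its isotropy group is $K_{i,j}\coloneqq\{g\in G\mid gA_{i,j}=A_{i,j}\}$ and the number of objects is $m_{i,j}=|A_{i,j}|/|K_{i,j}|$, so Proposition~\ref{prop:connectedcomp} yields $\C\Gamma_{i,j}\cong M_{m_{i,j}}(\C[K_{i,j}])$. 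A different choice of base object in the same component replaces $K_{i,j}$ by a conjugate subgroup and leaves $m_{i,j}$ unchanged, so the isomorphism class of the summand is well defined; taking the product over all $(i,j)$ gives the first isomorphism in \eqref{eq:AWdec}.

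Finally I would substitute the Artin--Wedderburn decomposition $\C[K_{i,j}]\cong\prod_{k=1}^{e_{i,j}}M_{d_k}(\C)$, which exists and has this shape by Maschke's theorem together with Artin--Wedderburn since $\C$ is an algebraically closed field of characteristic $0$, and use $M_{m}(M_{d}(\C))\cong M_{md}(\C)$ to obtain the second isomorphism in \eqref{eq:AWdec}. As a finite product of full matrix algebras over $\C$, the algebra $\C\Gamma_H(G)$ is semisimple, which is the last claim. I do not foresee a genuine obstacle: every step is either a citation of an earlier result or routine bookkeeping; the only points needing a line of care are the well-definedness of the summands under change of base object within a component and the elementary fact that each object of $\Gamma_H(G)$ has cardinality a multiple of $|H|$ bounded by $|G|$, both immediate from the definition of $\mathcal{P}_H(G/H)$.
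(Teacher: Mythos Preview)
Your proposal is correct and follows exactly the approach intended by the paper: the theorem is presented there as an immediate consequence of Proposition~\ref{prop:connectedcomp}, the decomposition \eqref{eq:semisimple}, and Remark~\ref{rem:components}, with no separate proof given. Your write-up simply makes explicit the bookkeeping (grouping components by the cardinality $i\cdot|H|$ of their objects and substituting the Artin--Wedderburn decomposition of each $\C[K_{i,j}]$) that the paper leaves to the reader.
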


\begin{example}
	Let $H$ be a subgroup of a finite group $G$ such that $[G:H]=2$. In this case there are only two objects in $\Gamma_H(G)$, of different cardinalities: $H$ and $G$, so Theorem~\ref{thm:algebra_decomposition} and \eqref{eq:cardinality} give $\C\Gamma_H(G)\cong \C[H]\times \C[G]$. 
\end{example}
See Example~\ref{ex:graphics2} for another computed example.

\smallskip

In order to understand the representation theory of $\C\Gamma_H(G)$, we outline how the representation theory of a finite dimensional semisimple associative unital algebra $A$ gets recovered from the representation theory of the algebras $eAe$ for the idempotents $e\in A$.

\medskip 

Let $A$ be a finite-dimensional associative semisimple unital algebra over $\mathbb{C}$, with $A\not\cong \mathbb{C}$. So, $A$ is the direct sum of matrix algebras by Wedderburn theory. Let $e\in A$ be a nontrivial idempotent of $A$, i.e.\ $0\neq e\neq 1$ (such an $e$ does exist since $A\not\cong \mathbb{C}$).

Given an $eAe$-module $V$ we define the $A$-module $\mathsf{Ind}_eV$ by setting
\[ \mathsf{Ind}_eV\coloneqq Ae\otimes_{eAe}V.  \]

The following theorem follows from standard theory of algebras. A proof is sketched in Appendix \ref{appendix:A} of the present article.

\begin{theorem} \label{thm:reps_eAe}
	If $W$ is an irreducible $eAe$-module, then $\mathsf{Ind}_eW$ is an irreducible $A$-module. Every irreducible $A$-module $V$ is isomorphic to $\mathsf{Ind}_eW$ for some nontrivial idempotent $e\in A$ and some irreducible $eAe$-module $W$.
\end{theorem}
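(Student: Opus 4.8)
The plan is to use the two standard functors relating modules over $A$ with modules over the corner algebra $eAe$: the functor $\mathsf{Ind}_e=Ae\otimes_{eAe}(-)\colon eAe\text{-}\mathsf{Mod}\to A\text{-}\mathsf{Mod}$ and the functor $M\mapsto eM\colon A\text{-}\mathsf{Mod}\to eAe\text{-}\mathsf{Mod}$, together with the semisimplicity of $A$. First I would record the elementary facts: $eM$ is naturally a left $eAe$-module, one has $em=m$ for $m\in eM$, and the assignment $M\mapsto eM$ is exact and commutes with finite direct sums (it is nothing but $\Hom_A(Ae,-)$, or one checks directly that multiplication by $e$ preserves kernels and cokernels). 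Applied to $M=\mathsf{Ind}_eW$ this gives a canonical identification $e\,\mathsf{Ind}_eW=eAe\otimes_{eAe}W\cong W$ of $eAe$-modules; in particular $\mathsf{Ind}_eW\neq 0$ whenever $W\neq 0$. Finally, since $ae\otimes w=a\cdot(e\otimes w)$ and $e\otimes w=e\cdot(e\otimes w)\in e\,\mathsf{Ind}_eW$, the module $\mathsf{Ind}_eW$ is generated over $A$ by $e\,\mathsf{Ind}_eW$, i.e. $\mathsf{Ind}_eW=A\cdot\bigl(e\,\mathsf{Ind}_eW\bigr)$; I will refer to this as the generation property.

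Next I would prove the classical observation that $e(-)$ carries simples to $0$ or to simples: if $M$ is a simple $A$-module and $0\neq U\subseteq eM$ is an $eAe$-submodule, then $AU$ is a nonzero $A$-submodule of $M$, hence $AU=M$, so $eM=eAU=(eAe)U\subseteq U$ (using $eU=U$ and the $eAe$-stability of $U$); thus $eM$ is simple as soon as it is nonzero. Granting this, the first assertion follows quickly. Let $W$ be a simple $eAe$-module and set $M\coloneqq\mathsf{Ind}_eW$. Since $A$ is semisimple, write $M=\bigoplus_{k=1}^{r}S_k$ with each $S_k$ simple. If $eS_k=0$ for some $k$, write $M=S_k\oplus N_k$; then $eM=eN_k$, so by the generation property $M=A(eM)=A(eN_k)\subseteq N_k$, forcing $S_k=0$, a contradiction. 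Hence each $eS_k$ is a nonzero submodule of the simple module $eM\cong W$, so $eS_k=eM$ for every $k$; comparing with $eM=\bigoplus_k eS_k$ then forces $r=1$, i.e. $M=\mathsf{Ind}_eW$ is simple.

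For the converse, let $V$ be a simple $A$-module. Using that $A$ is semisimple and $A\not\cong\C$, decompose $A$ as a left module into minimal left ideals and let $e\in A$ be the primitive idempotent projecting onto a copy of $V$, so that $Ae\cong V$ as left $A$-modules. Then $e$ is nontrivial: $e\neq 0$ since $Ae\neq 0$, and $e\neq 1$ since $e=1$ would give $A\cong V$ simple as a left module over itself, whence $A\cong\C$, contrary to hypothesis. Moreover $eAe\cong\End_A(Ae)^{\mathrm{op}}$ has $\C$-dimension $\dim_\C\End_A(V)=1$ by Schur's lemma, so $eAe\cong\C$ and its unique simple module is $W\coloneqq\C=eAe$. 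Then $\mathsf{Ind}_eW=Ae\otimes_{eAe}eAe\cong Ae\cong V$, which proves the claim. (Alternatively, one may take any nontrivial idempotent $e$ with $eV\neq 0$ and $W\coloneqq eV$; by the lemma above $W$ is a simple $eAe$-module, and the natural surjection $\mathsf{Ind}_e(eV)\to AeV=V$ is then a nonzero map out of the simple module $\mathsf{Ind}_e(eV)$, hence an isomorphism.)

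The routine bookkeeping — exactness of $M\mapsto eM$ and the generation property $\mathsf{Ind}_eW=A\cdot e\,\mathsf{Ind}_eW$ — is straightforward, and the only point that genuinely needs care is that the idempotent produced in the converse is \emph{nontrivial}, i.e. $e\neq 1$: this is precisely where the standing assumption $A\not\cong\C$ enters. The main conceptual step, and the place where I expect the argument to require the most thought, is the dovetailing, under semisimplicity, of the decomposition of $\mathsf{Ind}_eW$ into simples with the fact that applying $e(-)$ must return the single simple $W$; everything else is formal.
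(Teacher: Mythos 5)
Your proposal is correct and follows essentially the same strategy as the paper's Appendix~A: the same two key lemmas (that $e(-)$ sends a simple $A$-module to $0$ or to a simple $eAe$-module, and that $e\,\mathsf{Ind}_eW\cong W$ together with the generation property $\mathsf{Ind}_eW=A\cdot e\,\mathsf{Ind}_eW$ force $\mathsf{Ind}_eW$ to be simple under semisimplicity), with your decomposition into simples $\bigoplus_k S_k$ being just a multi-summand rephrasing of the paper's two-summand argument. The only divergence is in the converse: the paper picks any nontrivial idempotent $e$ with $eV\neq 0$, sets $W=eV$, and applies Schur's lemma to the evaluation map $Ae\otimes_{eAe}eV\to V$, whereas your primary route instead produces a primitive idempotent with $Ae\cong V$ and $eAe\cong\End_A(V)^{\mathrm{op}}\cong\C$ --- a correct but slightly heavier detour (and your parenthetical alternative is exactly the paper's argument, so nothing is lost either way).
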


We apply this construction to our algebra $\mathbb{C}\Gamma_H(G)$. In this case we have the idempotents $(A,1_G)$, with $A\in\mathcal{P}_H(G/H)$. Notice that
\[ (A,1_G) \mathbb{C}\Gamma_H(G) (A,1_G) = \mathrm{span}_\mathbb{C}\{(A,g)\mid g A=A\}. \]
So if we set $K=K_A\coloneqq \{g\in G\mid gA=A \}$ (notice that $A\supseteq K$), then 
\[ (A,1_G) \mathbb{C}\Gamma_H(G) (A,1_G) \cong \mathbb{C}[K].  \]

Now given an irreducible representation $(W,\rho)$ of $K$, we want to understand the $\C\Gamma_H(G)$-module
\begin{equation}\label{eq:induction}
\mathsf{Ind}_{A}W\coloneqq \mathsf{Ind}_{(A,1_G)}W= \mathbb{C}\Gamma_H(G)(A,1_G) \otimes_{\mathbb{C}[K]}W.
\end{equation}

Observe that 
\begin{align*}
\mathbb{C}\Gamma_H(G) (A,1_G) & = \mathrm{span}_\mathbb{C}\{(A,g)\mid g^{-1}H\subseteq A\} \\
& =  \mathrm{span}_\mathbb{C}\{(A,g)\mid g^{-1}\in  A\}\\
& =  \mathrm{span}_\mathbb{C}\{(A,g)\mid g\in  A^{-1}\},
\end{align*} 
so that
\[\mathsf{Ind}_{A}W= \mathbb{C}\Gamma_H(G)(A,1_G) \otimes_{\mathbb{C}[K]}W=\mathrm{span}_\mathbb{C}\{(A,g)\mid g\in  A^{-1}\}\otimes_{\mathbb{C}[K]}W. \]

Notice that $A^{-1}$ is left $H$-invariant and right $K$-invariant. In particular, in light of Remark \ref{rem:components}, we have $A^{-1}=\coprod_{i=1}^{m_A} t_i^{-1}K$ and therefore
\begin{equation}\label{eq:indA}
\mathsf{Ind}_{A}W= \mathrm{span}_\mathbb{C}\{(A,g)\mid g\in  A^{-1}\}\otimes_{\mathbb{C}[K]}W =\bigoplus_{i=1}^{m_A}\C \left(A,t_i^{-1}\right)\otimes_{\mathbb{C}[K]}W 
\end{equation}
as vector spaces. This suggests a better description of $\Ind_AW$ as $\mathbb{C}\Gamma_H(G)$-module: we can think of it as a restriction (analogous to the one in Remark~\ref{rem:ourconstr_restriction}) of the $G$-global representation $\mathsf{Ind}_K^GW$ to the subspace 
\[ \Ind_AW=\bigoplus_{t_i^{-1}K\in A^{-1}/K}W^{t_i^{-1}K} \subseteq \mathsf{Ind}_K^GW, \]
where $W^{t_i^{-1}K} = \C \left(A,t_i^{-1}\right)\otimes_{\mathbb{C}[K]}W \cong W$ as vector spaces and the inclusion $\varphi$ is uniquely determined by
\begin{equation}\label{eq:varphi}
\bigoplus_{i=1}^{m_A}\C \left(A,t_i^{-1}\right)\otimes_{\mathbb{C}[K]}W  \to \C[G]\otimes_{\C[K]} W, \qquad (A,t_i^{-1}) \otimes_{\mathbb{C}[K]} w \mapsto t_i^{-1} \otimes_{\mathbb{C}[K]} w.
\end{equation}
This naturally gives an $H$-global $G$-partial representation, as $A^{-1}$ is left $H$-invariant.

\begin{remark} \label{rem:isotroisomo}
	Let us keep the notation from above. We already observed in Remark \ref{rem:components} that the other elements of the connected component of $A$ are the $t_j^{-1}A$. It is clear that $t_j^{-1}Kt_j=\{g\in G\mid gt_j^{-1}A=t_j^{-1}A \}$ is the stabilizer of $t_j^{-1}A$, which is therefore isomorphic to the stabilizer $K$ of $A$. If we call $\rho_j\colon t_j^{-1}Kt_j\to \End(W_j)$, for $W_j\equiv W$, the map defined as $\rho_j(x)\coloneqq \rho(t_jxt_j^{-1})$ for all $x\in t_j^{-1}Kt_j$, then $(W_j,\rho_j)$ is clearly an irreducible representation of $t_j^{-1}Kt_j$, and it is easy to show that $\Ind_{A}W$ and $\Ind_{t_j^{-1}A}W_j$ are isomorphic $H$-global $G$-partial representations. Therefore, up to isomorphism, the irreducible $H$-global $G$-partial representation $\Ind_{A}W$ depends only on the connected component of $\Gamma_H(G)$ containing $(A,1_G)$ and not on the particular vertex chosen.
\end{remark}

\begin{theorem} \label{thm:irredsHglGpar}
	Let $\Gamma_H(G)=\coprod_{i=1}^{[G:H]} \coprod_{j=1}^{c_i}\Gamma_{i,j}$ be the decomposition into connected components of $\Gamma_H(G)$, where the objects in $\Gamma_{i,j}$ have cardinality $i\cdot |H|$ for all $j=1,2,\dots,c_i$. Fix an object $A_{i,j}$ of $\Gamma_{i,j}$ for all $i$ and $j$ and set $K_{i,j}\coloneqq K_{A_{i,j}}$ and $m_{i,j}\coloneqq m_{A_{i,j}}$. Then the $\Ind_{A_{i,j}}W$'s, as $W$ runs over all the inequivalent irreducible representations of $K_{i,j}$, are all the inequivalent irreducible $H$-global $G$-partial representations up to isomorphism.
\end{theorem}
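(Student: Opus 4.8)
The plan is to pass to module categories and read everything off the Wedderburn structure of $\C\Gamma_H(G)$. By Corollary~\ref{cor:isocat} it is equivalent to classify the irreducible $\C\Gamma_H(G)$-modules and to match them, under the isomorphism of categories of Theorems~\ref{thm:Iso} and~\ref{thm:iso}, with the modules $\Ind_{A_{i,j}}W$ of \eqref{eq:induction}. If $[G:H]=1$, then $H=G$, $\C\Gamma_H(G)\cong\C[G]$, $\Ind_{A_{1,1}}W\cong W$, and the assertion is the classical classification of irreducible $G$-representations; so from now on assume $[G:H]\ge 2$, whence $\C\Gamma_H(G)\not\cong\C$ and, since $\Pp_H(G/H)$ has more than one element, each $e\coloneqq(A_{i,j},1_G)$ is a nontrivial idempotent (it is $\neq 0$ and $\neq 1_{\C\Gamma_H(G)}=\sum_{B\in\Pp_H(G/H)}(B,1_G)$).

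Fix a component $\Gamma_{i,j}$ and write $A\coloneqq A_{i,j}$, $K\coloneqq K_{i,j}$, $m\coloneqq m_{i,j}$, $e\coloneqq(A,1_G)$. From the discussion preceding \eqref{eq:induction} we have $e\,\C\Gamma_H(G)\,e\cong\C[K]$ via $(A,g)\mapsto g$, and $\C\Gamma_H(G)e=\operatorname{span}_\C\{(A,g)\mid g^{-1}\in A\}$ is spanned by arrows with source $A$; as $(A,g)$ is an arrow from $A$ to $gA$, each such arrow lies in the connected component $\Gamma_{i,j}$, so $\C\Gamma_H(G)e=\C\Gamma_{i,j}\,e$ is contained in the two-sided ideal $\C\Gamma_{i,j}\cong M_m(\C[K])$ of \eqref{eq:AWdec} and is annihilated by all other factors. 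Choosing $A$ as the base vertex of $\Gamma_{i,j}\cong\Gamma_m^K$ (Proposition~\ref{prop:connectedcomp}, Remark~\ref{rem:components}), the unit $e$ becomes the matrix idempotent $E_{11}$, so $\C\Gamma_H(G)e\cong M_m(\C[K])E_{11}$ is the standard Morita bimodule for $M_m(\C[K])$ and $E_{11}M_m(\C[K])E_{11}\cong\C[K]$. Hence $\Ind_{A}W=\C\Gamma_H(G)e\otimes_{\C[K]}W$ is, after extending by zero the action of the remaining factors, precisely the $\C\Gamma_H(G)$-module obtained from the $\C[K]$-module $W$ through the Morita equivalence $M_m(\C[K])\text{-}\mathsf{Mod}\simeq\C[K]\text{-}\mathsf{Mod}$. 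In particular, by Theorem~\ref{thm:reps_eAe} applied to the nontrivial idempotent $e$, $\Ind_{A}W$ is an irreducible $\C\Gamma_H(G)$-module; moreover only the factor $\C\Gamma_{i,j}$ acts nontrivially on it, so $\Ind_{A_{i,j}}W\cong\Ind_{A_{i',j'}}W'$ forces $(i,j)=(i',j')$, and then $W\cong W'$ because Morita equivalence reflects isomorphism classes.

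It remains to show exhaustion. Since $\C\Gamma_H(G)\cong\prod_{i,j}M_{m_{i,j}}(\C[K_{i,j}])$ is semisimple, every irreducible module is the pullback along the projection onto a single factor $M_{m_{i,j}}(\C[K_{i,j}])$ of an irreducible of that factor, and the irreducibles of $M_{m_{i,j}}(\C[K_{i,j}])$ are, up to isomorphism, exactly the Morita transports of the irreducible representations of $K_{i,j}$. Therefore the number of isomorphism classes of irreducible $\C\Gamma_H(G)$-modules equals $\sum_{i,j}\bigl(\#\text{ of irreducibles of }K_{i,j}\bigr)$, which is exactly the number of pairs $\bigl((i,j),W\bigr)$ in the statement; together with the injectivity shown above, the $\Ind_{A_{i,j}}W$ form a complete irredundant list. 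Independence of the chosen representative object $A_{i,j}$ within $\Gamma_{i,j}$ is Remark~\ref{rem:isotroisomo}, and transporting back along Corollary~\ref{cor:isocat} yields the statement for $H$-global $G$-partial representations. The one genuinely non-formal step is the identification in the second paragraph of $\C\Gamma_H(G)e$ with the Morita bimodule of the factor $M_{m_{i,j}}(\C[K_{i,j}])$ and of $\Ind_{A}(-)$ with the associated Morita functor; granting this, the rest is routine semisimple bookkeeping.
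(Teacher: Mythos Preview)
Your proof is correct and follows essentially the same route as the paper: both reduce to the module category of $\C\Gamma_H(G)$, invoke Theorem~\ref{thm:reps_eAe} for irreducibility, use the Wedderburn decomposition \eqref{eq:AWdec} together with Remark~\ref{rem:isotroisomo} for exhaustion, and distinguish the resulting irreducibles by their support on a single factor $M_{m_{i,j}}(\C[K_{i,j}])$. The only cosmetic difference is that the paper verifies irredundancy via a sum-of-squares dimension count against \eqref{eq:AWdec}, whereas you argue it directly from the Morita equivalence; both are equivalent bookkeeping.
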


\begin{proof}
	From Corollary \ref{cor:isocat} and Theorem~\ref{thm:reps_eAe} we know that the $\Ind_{A_{i,j}}W$'s are irreducible $H$-global $G$-partial representations. In addition, by resorting to Remark~\ref{rem:isotroisomo} as well, we conclude that any irreducible $H$-global $G$-partial representation is isomorphic to one of them. We are left to check that they are inequivalent. 
	
	By \eqref{eq:indA} we have $\dim_\C\left(\Ind_{A_{i,j}}W\right) = m_{i,j}\dim_\C(W)$, whence by Artin-Wedderburn theory
	\[
	\sum_{W}\dim_\C\left(\Ind_{A_{i,j}}W\right)^2 = \sum_{W}\left(m_{i,j}\dim_\C(W)\right)^2 = m_{i,j}^2\sum_{W}\dim_\C(W)^2 = m_{i,j}^2|K_{i,j}|
	\]
	where all the sums are over all the inequivalent irreducible representations $W$ of $K_{i,j}$.
	
	Summing over $i$ and $j$, and comparing with \eqref{eq:AWdec}, this shows that we cannot have redundancy among the irreducible representations that we have found.
\end{proof}

\begin{remark} \label{rem:lev1trivrep}
	It is noteworthy that the case \(i = 1\) in Theorem \ref{thm:irredsHglGpar} corresponds to \(H\)-global \(G\)-partial representations where any \(g \in G \setminus H\) acts as 0 (cf. Example \ref{ex:trivialHglGpar}), while the case \(i = [G : H]\) corresponds to the \(G\)-global representations. 
\end{remark}

\begin{example}
	Let $H$ be a subgroup of a finite group $G$ such that $[G:H]=2$. We already saw that $\C\Gamma_H(G)\cong \C[H]\times \C[G]$. The factor $\C[G]$	corresponds to the irreducible $G$-global representations, while the summand $\C[H]$ corresponds to the irreducible $H$-global representations where the elements of $G\setminus H$ act as $0$ (cf.\ Example~\ref{ex:trivialHglGpar}).  
	Both cases come from our construction (cf.\ Remark~\ref{rem:lev1trivrep}), so, by Theorem~\ref{thm:irredsHglGpar}, we found all the irreducible $H$-global $G$-partial representations in this case. 
\end{example}

\section{Representation theory: restriction, globalization and induction}

In this section we discuss some important constructions, like the restriction to $H$ and the globalization of an irreducible $H$-global $G$-partial representation, and a partial induction of a global representation of $H$ to an $H$-global $G$-partial representation.

\subsection{Restriction to $H$ of irreducibles} \label{sec:restrictionH}
 
Let $H$ be a subgroup of a finite group $G$, and let $(V,\pi)$ be an $H$-global $G$-partial representation. By definition, the restriction of $\pi$ to $H$ gives a global representation of $H$, denoted $\Res_H^GV$.  
In this section we describe this restriction for the irreducible $H$-global $G$-partial representations that we constructed in Section~\ref{sec:irreducibles}.

Recall that to get the $H$-global $G$-partial irreducibles, we started with an $A\in \mathcal{P}_H(G/H)$ and we considered an irreducible representation $(W,\rho)$ of the subgroup $K=K_A\coloneqq \{g\in G\mid gA=A \}$ of $G$. Then the corresponding irreducible $H$-global $G$-partial representation was given by $\Ind_{A}W$, i.e.\ by the restriction of the $G$-global representation $\mathsf{Ind}_K^GW$ to the subspace $\bigoplus_{t_i^{-1}K\in A^{-1}/K}W^{t_i^{-1}K} \subseteq \mathsf{Ind}_K^GW$ via the obvious inclusion and projection maps.   
We want to understand the restriction of $\Ind_{A}W$ to $H$.  
The answer to this problem is given by a well-known formula of Mackey suitably adapted to our ``restricted'' situation. We just need some more notation.

\begin{notation}[{cf.\ \cite[\S7.3]{SerreBook}}]\label{notation}
	Choose a set $S$ of representatives of $(H,K)$-double cosets of $A^{-1}$, i.e.\ $A^{-1}=\coprod_{s\in S}HsK$. For $s\in S$, let $K_s\coloneqq sKs^{-1}\cap H$, which is a subgroup of $H$. If we set
	\[ \rho^s(x)=\rho(s^{-1}xs),\quad \text{ for }x\in K_s, \]
	we obtain a representation $\rho^s\colon K_s\to \GL(W)$ of $K_s$, denoted $W_s$. Since $K_s$ is a subgroup of $H$, we can consider the induced representation $\Ind_{K_s}^HW_s$.
\end{notation}
The proof of the following theorem is identical to the one of \cite[Proposition 22]{SerreBook} (corresponding to the case $A=G$), so it will be omitted.
\begin{theorem} \label{thm:Hrestriction}
	The representation $\Res_{H}^{G}(\Ind_{A}W)$ of $H$ is isomorphic to the direct sum of the representations $\Ind_{K_s}^HW_s$, for $s\in S$.
\end{theorem}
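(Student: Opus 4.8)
The plan is to realize $\Ind_AW$ explicitly as the restriction of the global representation $\Ind_K^GW$ to the $H$-invariant subspace $W_{A^{-1}}=\bigoplus_{t_i^{-1}K\in A^{-1}/K}W^{t_i^{-1}K}$, and then observe that this subspace is in fact an $H$-subrepresentation of $\Res_H^G\Ind_K^GW$ (not merely a partial one). Indeed, since $A^{-1}$ is left $H$-invariant, for every $h\in H$ we have $h\cdot(t_i^{-1}K)\in A^{-1}/K$ again, so the global action of $h$ on $\Ind_K^GW=\C[G]\otimes_{\C[K]}W$ permutes the summands $W^{t_i^{-1}K}$ among themselves and thus preserves $W_{A^{-1}}$. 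Consequently the partial action $\pi(h)=\tau\circ\rho(h)\circ\varphi$ of $h$ on $W_{A^{-1}}$ (cf.\ Remark~\ref{rem:ourconstr_restriction}) agrees with the genuine linear automorphism $\rho(h)|_{W_{A^{-1}}}$, because $\tau$ restricted to $W_{A^{-1}}$ is the identity and $\rho(h)$ already maps $W_{A^{-1}}$ into itself. Hence $\Res_H^G(\Ind_AW)$ is literally the $H$-subrepresentation of $\Res_H^G(\Ind_K^GW)$ supported on the cosets in $A^{-1}/K$.

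Once this identification is made, the statement reduces to classical Mackey theory. The first step is to decompose $A^{-1}/K$ into $H$-orbits: these orbits are in bijection with the $(H,K)$-double cosets in $A^{-1}$, so choosing representatives $s\in S$ with $A^{-1}=\coprod_{s\in S}HsK$ gives a direct sum decomposition $W_{A^{-1}}=\bigoplus_{s\in S}W_{HsK}$ as $H$-representations, where $W_{HsK}=\bigoplus_{t_i^{-1}K\subseteq HsK}W^{t_i^{-1}K}$. The second step is to identify each summand $W_{HsK}$ with an induced representation from $H$: the $H$-set $HsK/K$ is isomorphic to $H/(sKs^{-1}\cap H)=H/K_s$, and carrying $W$ along this correspondence (with the twisted action $\rho^s(x)=\rho(s^{-1}xs)$ for $x\in K_s$, exactly as in Notation~\ref{notation}) yields $W_{HsK}\cong\Ind_{K_s}^HW_s$. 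Summing over $s\in S$ gives $\Res_H^G(\Ind_AW)\cong\bigoplus_{s\in S}\Ind_{K_s}^HW_s$, which is the claim.

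The only real content beyond Serre's argument is the reduction of the first paragraph — checking that the \emph{partial} action of $H$ on the subspace $W_{A^{-1}}$ coincides with an honest global action, so that the usual Mackey decomposition applies verbatim. This is exactly the point at which the hypothesis $hA=A$ for all $h\in H$ (equivalently $hA^{-1}=A^{-1}$), built into $A\in\mathcal P_H(G/H)$, gets used. Granting that, the remaining computation is word-for-word the proof of \cite[Proposition 22]{SerreBook} applied to the sub-$H$-module $W_{A^{-1}}$ instead of all of $\Ind_K^GW$, so, as the authors note, it may be safely omitted. I do not anticipate a genuine obstacle here: the subtlety is purely bookkeeping about which summands $W^{t_i^{-1}K}$ lie over which double coset, and this is handled by the choice of representatives $S$.
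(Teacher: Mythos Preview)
Your proposal is correct and follows exactly the approach the paper indicates: identify $\Res_H^G(\Ind_AW)$ with the genuine $H$-subrepresentation $W_{A^{-1}}\subseteq\Res_H^G(\Ind_K^GW)$ (using that $A^{-1}$ is left $H$-invariant) and then run Serre's Mackey argument verbatim on the cosets in $A^{-1}/K$. One tiny slip: the hypothesis built into $A\in\mathcal P_H(G/H)$ is $Ah=A$ for $h\in H$ (since $A$ is a union of \emph{left} cosets of $H$), not $hA=A$; but $Ah=A$ is exactly what gives $hA^{-1}=A^{-1}$, so your conclusion and the rest of the argument stand unchanged.
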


\begin{example} \label{ex:Hrestriction}
	In the notation above, suppose that $A=KH$, so that $A^{-1}=HK=H1_GK$ (see Section~\ref{sec:SnSnminus1} for a concrete example). Then $S=\{1_G\}$, $K_{1_G}=K\cap H$ and $\rho^{1_G}$ is just the restriction of $\rho$ to $K\cap H$, so that
	\[ \Res_{H}^{G}(\Ind_{KH}W)\cong \Ind_{K\cap H}^H(\Res_{K\cap H}^KW).  \]
\end{example}

\subsection{Globalization of irreducibles} \label{sec:gloablirreps}

We proved in Theorem~\ref{thm:globalization} that every $G$-partial representation admits a globalization, unique up to isomorphism. In this section we will give an explicit description of the globalization of the irreducible $H$-global $G$-partial representations.

Let $A\in \mathcal{P}_H(G/H)$ and $(W,\rho)$ be an irreducible representation of the subgroup $K=K_A\coloneqq \{g\in G\mid gA=A \}$ of $G$. The corresponding irreducible representation is given by
\[\Ind_{A}W=\bigoplus_{t_i^{-1}K\in A^{-1}/K}\C t^{-1}_i \otimes_{\C[K]}W \subseteq \C[G] \otimes_{\C[K]} W \cong \bigoplus_{g_iK \in G/K} W^{g_iK}.\]

\begin{theorem} \label{thm:irredglobalization}
	The globalization of the irreducible $H$-global $G$-partial representation $\Ind_{A}W$ is given by $\mathsf{Ind}_K^GW$ with the obvious inclusion and projection maps.
\end{theorem}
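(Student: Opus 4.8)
The plan is to verify the four globalization axioms \ref{item:GR1}--\ref{item:GR3} of Definition~\ref{def:globalization} for the quadruple $(\mathsf{Ind}_K^GW,\rho,\varphi,\tau)$, where $\rho$ is the usual $G$-action on $\mathsf{Ind}_K^GW=\C[G]\otimes_{\C[K]}W$, the inclusion $\varphi$ is the map \eqref{eq:varphi} sending $(A,t_i^{-1})\otimes_{\C[K]}w$ to $t_i^{-1}\otimes_{\C[K]}w$, and $\tau\colon\mathsf{Ind}_K^GW\to\Ind_AW$ is the projection with kernel $\bigoplus_{g_iK\notin A^{-1}/K}W^{g_iK}$. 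Property \ref{item:GR1} is immediate since $\mathsf{Ind}_K^GW$ is a genuine $G$-representation. For \ref{item:GR2} one must check that $\Ind_AW$, as an $H$-global $G$-partial representation via $\pi(g)\coloneqq\tau\circ\rho(g)\circ\varphi$, is exactly the restriction in the sense of Definition~\ref{def:restriction_Gglobal}; but this was essentially already observed in Remark~\ref{rem:ourconstr_restriction}, which identifies the construction of Section~\ref{sec:genconstruction} with such a restriction, and the irreducibles $\Ind_AW$ are precisely instances of that construction (with $K=K_A$ and the union of double cosets being $A^{-1}$), so \ref{item:RR1} and \ref{item:RR2} hold.

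**Key steps.** First I would make the identification of $\Ind_AW$ with a $W_A$-type object from Section~\ref{sec:genconstruction} completely explicit, matching the left $H$-invariant, right $K$-invariant set $A^{-1}$ with the set called $A$ there, and the coset decomposition $A^{-1}=\coprod_{i=1}^{m_A}t_i^{-1}K$ with the chosen representatives. This reduces \ref{item:GR2} to Remark~\ref{rem:ourconstr_restriction}. Second, and this is the substantive part, I would verify \ref{item:GR3} via the reformulation in Remark~\ref{rem:primeaxioms}: it suffices to prove \ref{item:GR3'}, namely $\mathsf{Ind}_K^GW=\sum_{g\in G}\rho(g)(\varphi(\Ind_AW))$, and \ref{item:GR4'}, namely that for any other globalizing quadruple $(U',\rho',\varphi',\tau')$ the assignment $\rho(g)(\varphi(v))\mapsto\rho'(g)(\varphi'(v))$ is a well-defined linear map. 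For \ref{item:GR3'}: every summand $W^{g_iK}$ of $\mathsf{Ind}_K^GW$ equals $\rho(g_it_j)(W^{t_j^{-1}K})$ for any $t_j$, and $W^{t_j^{-1}K}\subseteq\varphi(\Ind_AW)$ by construction, so the span of the $G$-translates exhausts everything. For \ref{item:GR4'}: suppose $\rho(g)(\varphi(v))=\rho(h)(\varphi(w))$ in $\mathsf{Ind}_K^GW$; I would argue that $\rho(h^{-1}g)(\varphi(v))=\varphi(w)$ lies in $\varphi(\Ind_AW)$, deduce from the explicit coset description that $h^{-1}g$ moves the relevant coset inside $A^{-1}/K$, hence $\varphi(\pi(h^{-1}g)(v))=\rho(h^{-1}g)(\varphi(v))$ by \ref{item:RR2}, and then transport this equality through $(U',\rho',\varphi',\tau')$ using \ref{item:RR2} for that quadruple to conclude $\rho'(g)(\varphi'(v))=\rho'(h)(\varphi'(w))$ — exactly the style of the well-definedness argument already carried out in the proof of Theorem~\ref{thm:globalization} and in Remark~\ref{rem:globuniversal}.

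**Main obstacle.** The routine but delicate point is the bookkeeping in \ref{item:GR4'}: one has to track precisely when a product $h^{-1}g$ sends a coset $t_j^{-1}K$ (lying in $A^{-1}/K$) back into $A^{-1}/K$, and to see that on the nose the partial action $\pi$ agrees with the restricted $\rho$-action there while it is zero otherwise. Here the left $H$-invariance of $A^{-1}$ is what guarantees the resulting object is genuinely $H$-global and that the compatibility \ref{item:RR2} is available on the full subspace $V_{g^{-1}}=\pi(g^{-1})\pi(g)(\Ind_AW)$. An alternative, possibly cleaner, route is to invoke the uniqueness half of Theorem~\ref{thm:globalization}: build \emph{some} globalization of $\Ind_AW$ abstractly, then exhibit a $G$-equivariant isomorphism from it to $\mathsf{Ind}_K^GW$ commuting with the structure maps, using that $\mathsf{Ind}_K^GW=\C[G]\otimes_{\C[K]}W$ is generated as a $G$-module by $W^{1_GK}\subseteq\varphi(\Ind_AW)$ (when $1_G\in A^{-1}$, i.e.\ $K\subseteq A$, which holds since $1_G\in A$). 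I would present the direct verification of \ref{item:GR1}, \ref{item:GR2}, \ref{item:GR3'}, \ref{item:GR4'} as the main argument, since every ingredient is already available in the excerpt and it parallels the proof of Theorem~\ref{thm:globalization} almost verbatim.
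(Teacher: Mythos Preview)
Your overall plan coincides with the paper's: verify \ref{item:GR1}, \ref{item:GR2} via Remark~\ref{rem:ourconstr_restriction}, and then establish \ref{item:GR3} by checking \ref{item:GR3'} and \ref{item:GR4'} from Remark~\ref{rem:primeaxioms}. The treatment of \ref{item:GR1}, \ref{item:GR2}, and \ref{item:GR3'} is effectively identical.

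The genuine difference is in \ref{item:GR4'}. You propose a direct well-definedness argument in the style of Remark~\ref{rem:globuniversal}: assume $\rho(g)(\varphi(v))=\rho(h)(\varphi(w))$, use the coset decomposition $\mathsf{Ind}_K^GW=\bigoplus_{cK}W^{cK}$ to see that $v\in V_{g^{-1}h}$, and then transport via \ref{item:RR2}. This is correct, but note that the pairwise check alone does not a priori establish \emph{linear} well-definedness of $\psi$; you would still need to observe that the direct sum structure of $\mathsf{Ind}_K^GW$ forces every relation in $\ker\big(\C[G]\otimes V\to U\big)$ to decompose into two-term relations coset by coset. That reduction is the real content of the ``delicate bookkeeping'' you flag. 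The paper sidesteps all of this: it takes the composite $W\to\Ind_AW\xrightarrow{\varphi'}U'$, $w\mapsto\varphi'((A,1_G)\otimes_{\C[K]}w)$, checks via \ref{item:RR2} that it is $K$-linear, and then invokes the universal property of $\mathsf{Ind}_K^GW=\C[G]\otimes_{\C[K]}W$ (Frobenius reciprocity) to produce the required $G$-map $\psi\colon \mathsf{Ind}_K^GW\to U'$ directly. What your approach buys is a self-contained argument not relying on the adjunction; what the paper's approach buys is that the well-definedness is automatic and the only computation needed is the single $K$-linearity check.
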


\begin{proof}
	Set $U\coloneqq \mathsf{Ind}_K^GW$ and $V\coloneqq\Ind_{A}W$, for the sake of simplicity. Obviously $(U,\rho)$ is a global representation of $G$ and we already observed (cf.\ the discussion preceding Remark~\ref{rem:isotroisomo}) that $(V,\pi)$ is the restriction of $(U,\rho)$ via the inclusion $\varphi$ and projection $\tau$ as in Remark \ref{rem:ourconstr_restriction}. This proves properties \ref{item:GR1} and \ref{item:GR2} of a globalization. 
	
	Instead of proving property \ref{item:GR3}, we will check the properties \ref{item:GR3'} and \ref{item:GR4'} of Remark~\ref{rem:primeaxioms}. Property \ref{item:GR3'}, i.e.\ $U=\sum_{g \in G}\rho(g)(V)$, is obvious. In order to check property \ref{item:GR4'}, let $(U',\rho',\varphi',\tau')$ be another quadruple satisfying \ref{item:GR1} and \ref{item:GR2}. Consider the composition
	\[
	\begin{gathered}
	\xymatrix @R=0pt{
		W \ar[r] & \Ind_AW \ar[r]^{\varphi'} & U' \\ 
		w \ar@{|->}[r] & (A,1_G) \otimes_{\C[K]} w \ar@{|->}[r] & \varphi'\left((A,1_G) \otimes_{\C[K]} w\right)
	}
	\end{gathered}.
	\]
	It satisfies
	\[
	\begin{gathered}
	\varphi'\left((A,1_G) \otimes_{\C[K]} \rho(k)(w)\right) = \varphi'\left((A,k) \otimes_{\C[K]} w\right) = \varphi'\left(\left(\sum_{B\ni k^{-1}}(B,k)\right)(A,1_G) \otimes_{\C[K]} w\right) \\
	= \varphi'\left(\pi(k)\Big((A,1_G) \otimes_{\C[K]} w\Big)\right) \stackrel{\ref{item:RR2}}{=} \rho'(k)\left(\varphi'\Big((A,1_G) \otimes_{\C[K]} w\Big)\right),
	\end{gathered}
	\]
	which means that it is a $K$-homomorphism and so there exists a unique $G$-homomorphism $\psi\colon U\to U'$ such that
	\begin{equation}\label{eq:psivarphi'}
	\psi(1_G \otimes_{\C[K]} w) = \varphi'\left((A,1_G) \otimes_{\C[K]} w\right),
	\end{equation}
	by the universal property of $\mathsf{Ind}_K^G(W)$. In turn, $\psi$ satisfies
	\[
	\begin{gathered}
	\psi\left(\varphi((A,t_i^{-1}) \otimes_{\C[K]} w)\right) \stackrel{\eqref{eq:varphi}}{=} \psi\left(t_i^{-1} \otimes_{\C[K]} w\right) = \rho'(t_i^{-1})\psi\left(1_G \otimes_{\C[K]} w\right) \\
	\stackrel{\eqref{eq:psivarphi'}}{=} \rho'(t_i^{-1})\varphi'\left((A,1_G) \otimes_{\C[K]} w\right) \stackrel{\eqref{eq:varphipi}}{=} \varphi'\left((A,t_i^{-1}) \otimes_{\C[K]} w\right)
	\end{gathered}
	\]
	Being already a $G$-homomorphism, $\psi$ is the map required in \ref{item:GR4'}, completing the proof.
\end{proof}

\subsection{Induction from $H$-global to $H$-global $G$-partial}\label{sec:induction}

Consider a subgroup $H$ of a finite group $G$. Given a $G$-partial representation $(V,\pi)$, it is clear that the restriction $\Res_H^G(\pi)\coloneqq \pi_{|_H}\colon H\to \End(V)$ gives an $H$-partial representation, that we denote $\Res_H^GV$. By definition, $(V,\pi)$ is $H$-global if and only if $\Res_H^GV$ is a global representation of $H$.

For global representations, there is the well-known converse construction of the latter: the induction $\Ind_H^GW$ of a global representation $W$ of $H$, that we already used in this paper. This representation satisfies a universal property, i.e.\ it is equipped with an $H$-homomorphism $\eta_W\colon W\to \Ind_H^GW$ such that for any $H$-homomorphism $f\colon W\to \Res_H^GU$ into a $G$-global representation $U$, there exists a unique $G$-homomorphism $\tilde{f}\colon \Ind_H^GW\to U$ such that $f=\tilde{f}\circ \eta_W$, i.e.\ the following diagram commutes
\[
\begin{gathered}
\xymatrix{
	\Ind_H^GW \ar@{.>}[dr]^-{\exists !\tilde{f}} & \\
	W\ar[u]^-{\eta_W} \ar[r]_-{f} & \Res_H^GU\equiv U.
} \end{gathered}
\]
Equivalently, there is a bijection
\[
\begin{gathered}
\xymatrix @R=0pt {
	\Hom_{G}\left(\Ind_H^GW,U\right) \ar@{<->}[r]^{\cong} & \Hom_{H}\left(W,\Res^G_HU\right) \\
	f' \ar@{|->}[r] & f'\circ \eta_W \\
	\tilde{f} & f \ar@{|->}[l]
}
\end{gathered}
\]
which is known as \emph{Frobenius reciprocity}.

It is now natural to ask if starting with a global representation $W$ of $H$, we can construct a \emph{partial induction} which is an $H$-global $G$-partial representation satisfying a similar reciprocity.

We propose the following definition.
\begin{definition}
	The \emph{partial induction} of a global representation $W$ of $H\subseteq G$ to $G$ is an $H$-global $G$-partial representation $\PInd_H^GW$ equipped with an $H$-homomorphism $\eta_W\colon W\to \PInd_H^GW$, such that for every $H$-homomorphism $f\colon W \to \Res_H^GU\equiv U$ from $W$ to an $H$-global $G$-partial representation $U$, there exists a unique morphism of $G$-partial representations $\tilde{f}\colon \PInd_H^GW \to U$ such that $\tilde{f} \circ \eta_W = f$.
\end{definition}

In the present section we prove the existence of such a partial induced representation by providing an explicit construction. Notice that, in light of the universal property that defines it, if an induced partial representation exists, then it is necessarily unique. Therefore, we will refer to it as \emph{the} partial induced representation.

Consider a subgroup $H$ of a finite group $G$, and let $G=\coprod_{i=1}^rg_iH$ with $g_1=1_G$. Given an $H$-global representation $(W,\rho)$, we define the vector space
\[ \PPInd _H^GW \coloneqq \bigoplus_{A\in\mathcal{P}_H(G/H)} \bigoplus_{g_iH \in A/H}W^{A,i}  \]
where the $W^{A,i}$ are vector spaces equipped with linear isomorphisms $\phi_{A,i}\colon W\to W^{A,i}$.

We define $\widetilde{\rho}\colon G\to \mathsf{End}(\PPInd _H^GW)$ by setting for all $g\in G$, $w\in W$, $A\in \mathcal{P}_H(G/H)$ and $g_iH\subseteq A$
\begin{equation}\label{s8eq:defInd}
\widetilde{\rho}(g)(\phi_{A,i}(w)) \coloneqq \left\{ \begin{array}{ll}
0 & \text{ if }g^{-1}H\not\subseteq A\\
\phi_{gA,j}(\rho(h)(w)) & \text{ if }g^{-1}H \subseteq A\text{ and }gg_i=g_jh,\text{ with }h\in H
\end{array}\right.\, .
\end{equation}

\begin{proposition}\label{s12prop:PInd}
	The pair $(\PPInd _H^GW,\widetilde{\rho})$ is an $H$-global $G$-partial representation.
\end{proposition}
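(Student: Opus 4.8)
The plan is to verify directly that $\widetilde{\rho}$ satisfies the characterization of $H$-global $G$-partial representations given in Lemma~\ref{lem:HglobalCharacterization}, that is, conditions \ref{item:GPR1}, \ref{item:GPR2} and \ref{item:GPR3}, since this is the most economical route and avoids checking \ref{item:PR2} and \ref{item:PR3} separately. Condition \ref{item:GPR1} is immediate: for $g = 1_G$ the hypothesis $g^{-1}H\subseteq A$ always holds (since $1_G \in A$ for every $A \in \mathcal{P}_H(G/H)$), and writing $1_G g_i = g_i 1_G$ gives $\widetilde{\rho}(1_G)(\phi_{A,i}(w)) = \phi_{A,i}(\rho(1_G)(w)) = \phi_{A,i}(w)$, so $\widetilde{\rho}(1_G) = \Id$.

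Next I would record two auxiliary observations that make the bookkeeping manageable. First, one should check that the defining formula \eqref{s8eq:defInd} is independent of the choice of coset representatives $g_i$: replacing $g_i$ by $g_i h_0$ with $h_0 \in H$ forces $\phi_{A,i}$ to change accordingly, and the $\rho(h)$ factor compensates; since the $W^{A,i}$ are indexed by cosets $g_iH$, this is the standard induced-representation argument. Second, I would note the key structural fact that $\widetilde{\rho}(g)$ maps the summand $W^{A,i}$ into $W^{gA,j}$ where $g_jH$ is the coset of $gg_i$, and that $\widetilde{\rho}(g)$ kills $W^{A,i}$ entirely whenever $g^{-1}H \not\subseteq A$, i.e.\ whenever $g^{-1} \notin A$. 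This means that when one composes several operators $\widetilde{\rho}(\cdot)$ and tracks a vector $\phi_{A,i}(w)$, at each stage either the computation produces $0$ (and both sides of the identity to be proven vanish) or it produces $\phi_{B,k}(\rho(h_1 h_2 \cdots)(w))$ with the product of the $H$-elements accumulating exactly as in the genuine induced representation $\Ind_K^G$ — here with $K = H$, but restricted to the subspace indexed by those $A$ containing the relevant coset.

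For \ref{item:GPR2} I would fix $\bar g, g, h \in G$ with $g^{-1}\bar g \in H$ and a generator $\phi_{A,i}(w)$, and chase it through both $\widetilde{\rho}(\bar g^{-1})\widetilde{\rho}(g)\widetilde{\rho}(h)$ and $\widetilde{\rho}(\bar g^{-1})\widetilde{\rho}(gh)$. The point is that the ``witness'' $\widetilde{\rho}(\bar g^{-1})$ on the outside imposes the condition $\bar g \in gA$ after the inner operators have acted; using $g^{-1}\bar g \in H$ one sees $\bar g \in gA \iff g^{-1}\bar g \in A \iff 1_G \in A$, which is automatic, so the support conditions on the two sides coincide, and when they are satisfied both sides equal $\phi_{\bar g^{-1} \cdot (\text{target})}(\rho(\text{same product in }H)(w))$. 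Here one uses that whenever $gg_i = g_j h'$ and then $\bar g^{-1} g_j = g_k h''$, the fact that $\bar g^{-1}g$ lies in $H$ lets the two-step computation match the one-step computation $\bar g^{-1}(g h)$ acting directly, because $hh''h' $ versus the single $H$-element differ by an element of $H$ that gets absorbed. Condition \ref{item:GPR3} is handled by the mirror-image argument. The main obstacle — and the only genuinely laborious part — is carefully matching the subset-support conditions ($g^{-1}H \subseteq A$ at each application) on the two sides of each identity: one must check that whenever the left-hand composition is nonzero the right-hand one is too, with the same value, and this is precisely where the hypothesis $g^{-1}\bar g \in H$ (resp.\ $h^{-1}\bar h \in H$) is used, exactly as in the proof of Lemma~\ref{witnessinv}. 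Finally, that $(\PPInd_H^GW, \widetilde{\rho})$ is $H$-\emph{global} follows by taking $\bar g = \bar h = 1_G$ in \ref{item:GPR2}–\ref{item:GPR3}, as in Lemma~\ref{lem:HglobalCharacterization}; alternatively, for $h_0 \in H$ the operator $\widetilde{\rho}(h_0)$ sends $\phi_{A,i}(w) \mapsto \phi_{h_0 A, j}(\rho(h)(w))$ which is visibly invertible with inverse $\widetilde{\rho}(h_0^{-1})$, since $h_0^{-1} \in A$ for all $A$. I would leave the routine but lengthy verifications of \ref{item:GPR2} and \ref{item:GPR3} to the reader, as the authors do for the analogous Lemma in Section~\ref{sec:genconstruction}.
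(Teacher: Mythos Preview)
Your overall strategy---verifying \ref{item:GPR1}--\ref{item:GPR3} from Lemma~\ref{lem:HglobalCharacterization} rather than \ref{item:PR1}--\ref{item:PR3} directly---is sound, and since the paper's own proof simply declares the verification ``tedious but straightforward'' and leaves it to the reader, your proposal is in the same spirit and at least as detailed.

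There is, however, a concrete error in your sketched analysis of \ref{item:GPR2}. You claim that after the inner operators act, the outer witness $\widetilde{\rho}(\bar g^{-1})$ imposes the condition $\bar g \in gA$, and that this is equivalent to $1_G\in A$ and hence automatic. In fact, once $\widetilde{\rho}(g)\widetilde{\rho}(h)$ (or $\widetilde{\rho}(gh)$) has acted nontrivially on $\phi_{A,i}(w)$, the result lies in the summand indexed by $ghA$, so the witness condition is $\bar g \in ghA$. Since $g^{-1}\bar g \in H$ and $A$ is a union of left $H$-cosets, one finds
\[
\bar g \in ghA \iff h^{-1}(g^{-1}\bar g) \in A \iff h^{-1}H\subseteq A \iff h^{-1}\in A.
\]
This is \emph{not} automatic; rather, it is precisely the ``extra'' support condition that appears on the left-hand side (where $\widetilde{\rho}(h)$ acts first and requires $h^{-1}\in A$) but not, a priori, on the right-hand side. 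The witness does its job exactly by forcing $h^{-1}\in A$ on the right-hand side as well, so that in the critical case $h^{-1}\notin A$ but $(gh)^{-1}\in A$ both sides vanish. With this correction your outline goes through; the analogous fix applies to \ref{item:GPR3}.
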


\begin{proof}
	The fact that $\widetilde{\rho}$ is $H$-global is clear from the definition. The fact that it is $G$-partial is a tedious but straightforward verification, that we leave to the reader.
\end{proof}

\begin{remark}
	For any $A \in \Pp_H\left( G/H\right)$, consider the orthogonal idempotent $P_A\coloneqq P_A^{\widetilde{\rho}}$ as in Definition \ref{PA}. Recall from Proposition \ref{s6prop:PA} that
	\[
	P_A=P_A^{\widetilde{\rho}} = \prod_{g_kH\subseteq A}\widetilde{\rho}(g_k)\widetilde{\rho}(g_k^{-1})\prod_{g_iH\subseteq G\setminus A}\left(\Id_{\PPInd_H^GW}-\widetilde{\rho}(g_i)\widetilde{\rho}(g_i^{-1})\right).
	\]
	It can be easily checked that $\bigoplus_{g_iH\subseteq A}W^{A,i} = (\PPInd_H^GW)^A=P_A(\PPInd_H^GW)$ (cf.\ Section~\ref{sec:GeneralProperties}).
\end{remark}

\begin{lemma}
	The function
	\begin{equation}\label{s8eq:eta}
	\eta_W\colon W \to \PPInd _H^GW, \qquad w\mapsto \sum_{A\in \mathcal{P}_H(G/H)}\phi_{A,1}(w)
	\end{equation}
	is an $H$-homorphism. Furthermore, it satisfies
	\begin{equation}\label{s8eq:etaPhi}
	\phi_{A,i}(w) = \widetilde{\rho}(g_i)P_{g_i^{-1}A}\eta_W\left(\phi_{g_i^{-1}A,1}^{-1}\widetilde{\rho}\left(g_i^{-1}\right)\left(\phi_{A,i}(w)\right)\right)
	\end{equation}
	for every $w\in W$, $A\in\mathcal{P}_H(G/H)$ and all $i=1,\ldots,r$ such that $g_iH\subseteq A$.
\end{lemma}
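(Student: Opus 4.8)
The plan is to verify the two stated properties separately, both by direct computation from the definitions \eqref{s8eq:defInd} and \eqref{s8eq:eta}. For the first property, I would check that $\eta_W$ intertwines the $H$-action: for $h\in H$ and $w\in W$, I compute $\widetilde{\rho}(h)(\eta_W(w)) = \sum_{A}\widetilde{\rho}(h)(\phi_{A,1}(w))$. Since $g_1 = 1_G$, we have $h^{-1}H = H \subseteq A$ for every $A\in\mathcal{P}_H(G/H)$, so the first alternative in \eqref{s8eq:defInd} never triggers; writing $hg_1 = h = g_1 h$ we get $\widetilde{\rho}(h)(\phi_{A,1}(w)) = \phi_{hA,1}(\rho(h)(w)) = \phi_{A,1}(\rho(h)(w))$ because $hA = A$ (as $A$ is a union of left cosets of $H$). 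Summing over $A$ yields $\eta_W(\rho(h)(w))$, which is exactly $H$-linearity. (Here I am identifying the $H$-action on $\PPInd_H^GW$ with the restriction $\Res_H^G$, as in the statement.)

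For the second property \eqref{s8eq:etaPhi}, the strategy is to evaluate the right-hand side from the inside out. Fix $A\in\mathcal{P}_H(G/H)$ and $i$ with $g_iH\subseteq A$, so that $g_i^{-1}H \subseteq A$, i.e.\ $g_i^{-1}\in A$, which means $(g_i^{-1}A, g_i)\in\Gamma_H(G)$ and the inner application of $\widetilde{\rho}(g_i^{-1})$ on $\phi_{A,i}(w)$ uses the nonzero branch. Writing $g_i^{-1}g_i = 1_G = g_1\cdot 1_G$, we get $\widetilde{\rho}(g_i^{-1})(\phi_{A,i}(w)) = \phi_{g_i^{-1}A, 1}(w)$; applying $\phi_{g_i^{-1}A,1}^{-1}$ recovers $w$. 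Next, $\eta_W(w) = \sum_{B\in\mathcal{P}_H(G/H)}\phi_{B,1}(w)$, and applying $P_{g_i^{-1}A}$ projects onto the summand $B = g_i^{-1}A$ (using the identification $P_A(\PPInd_H^GW)^A = \bigoplus_{g_jH\subseteq A}W^{A,j}$ noted in the remark preceding the lemma, together with the fact that $\phi_{g_i^{-1}A,1}(w)$ lies in the $j=1$ component of that summand since $g_1 H = H\subseteq g_i^{-1}A$). This leaves $\phi_{g_i^{-1}A,1}(w)$. Finally, $\widetilde{\rho}(g_i)$ applied to $\phi_{g_i^{-1}A,1}(w)$: we have $g_i^{-1}H \subseteq g_i^{-1}A$ (equivalently $H \subseteq A$, true), so the nonzero branch applies; with $g_i g_1 = g_i = g_i\cdot 1_G$ we obtain $\widetilde{\rho}(g_i)(\phi_{g_i^{-1}A,1}(w)) = \phi_{g_i(g_i^{-1}A), i}(\rho(1_G)(w)) = \phi_{A,i}(w)$, as desired.

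The main technical point to be careful about is the middle step, the interaction of $P_{g_i^{-1}A}$ with $\eta_W(w)$: one must confirm that $P_{g_i^{-1}A}$ kills $\phi_{B,1}(w)$ for every $B\neq g_i^{-1}A$ and fixes $\phi_{g_i^{-1}A,1}(w)$. This follows from Lemma~\ref{lem:sumP_AequalId} and the identification of $P_A(\PPInd_H^GW)$ with $\bigoplus_{g_jH\subseteq A}W^{A,j}$, but it requires knowing that $\phi_{B,1}(w)\in (\PPInd_H^GW)^B$ for each $B$, which is immediate from the decomposition $\PPInd_H^GW = \bigoplus_{A}\bigoplus_{g_iH\subseteq A}W^{A,i}$ being the eigenspace decomposition for the system $\{P_A\}$. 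I expect no genuine obstacle here — the computation is entirely mechanical once one keeps track of which branch of \eqref{s8eq:defInd} is active at each stage — but the bookkeeping with the coset conditions $g^{-1}H\subseteq A$ and the decomposition $gg_i = g_jh$ is where a careless sign or index slip would occur.
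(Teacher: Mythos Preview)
Your approach is the same as the paper's, and the second part (verification of \eqref{s8eq:etaPhi}) is correct and matches the paper's computation. However, there is a genuine error in your argument for $H$-linearity.

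You claim that $\phi_{hA,1}(\rho(h)(w)) = \phi_{A,1}(\rho(h)(w))$ because ``$hA = A$ (as $A$ is a union of left cosets of $H$)''. This is false: $A$ being a union of \emph{left} cosets of $H$ means $Ah = A$ for all $h\in H$, not $hA = A$. Left multiplication by $h\in H$ does \emph{not} fix each $A\in\mathcal{P}_H(G/H)$ individually in general. (For a concrete counterexample, take $G=\Sym_3$, $H=\Sym_3^{[1]}$, and $A$ corresponding to $\{1,2\}\subseteq[3]$ under the identification of Example~\ref{ex:SkSnk_action}; then $(2,3)A$ corresponds to $\{1,3\}\neq\{1,2\}$.) What \emph{is} true is that $A\mapsto hA$ is a bijection of $\mathcal{P}_H(G/H)$ onto itself (since $hH=H\subseteq hA$ and $hA$ is still a union of left cosets), so after computing $\widetilde{\rho}(h)(\phi_{A,1}(w))=\phi_{hA,1}(\rho(h)(w))$ term by term, you should \emph{reindex} the sum via $B=hA$ to obtain $\sum_{B}\phi_{B,1}(\rho(h)(w))=\eta_W(\rho(h)(w))$. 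This is exactly what the paper does. Your conclusion is correct, but the justification needs this fix.

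A minor remark on the second part: your sentence ``$g_iH\subseteq A$, so that $g_i^{-1}H\subseteq A$'' is garbled. The condition needed for $\widetilde{\rho}(g_i^{-1})(\phi_{A,i}(w))$ to hit the nonzero branch of \eqref{s8eq:defInd} is $(g_i^{-1})^{-1}H = g_iH\subseteq A$, which is precisely the hypothesis; no passage to $g_i^{-1}H$ is needed or warranted.
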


\begin{proof}
	Let $h\in H$. Then, using \eqref{s8eq:defInd}, we compute
	\begin{align*}
	\widetilde{\rho}(h)\left(\eta_W(w)\right) & = \widetilde{\rho}(h)\left(\sum_{A\in \mathcal{P}_H(G/H)}\phi_{A,1}(w)\right) = \sum_{A\in \mathcal{P}_H(G/H)}\widetilde{\rho}(h)\left(\phi_{A,1}(w)\right) \\
	& = \sum_{A\in \mathcal{P}_H(G/H)}\phi_{hA,1}\left(\rho(h)(w)\right) = \sum_{B\in \mathcal{P}_H(G/H)}\phi_{B,1}\left(\rho(h)(w)\right)\\
	& = \eta_W\left(\rho(h)(w)\right),
	\end{align*}
	which proves the $H$-linearity. To show \eqref{s8eq:etaPhi}, using again \eqref{s8eq:defInd}, we compute
	\begin{align*}
	\widetilde{\rho}(g_i)P_{g_i^{-1}A}\eta_W & \left(\phi_{g_i^{-1}A,1}^{-1}\widetilde{\rho}\left(g_i^{-1}\right)\left(\phi_{A,i}(w)\right)\right)  = \widetilde{\rho}(g_i)P_{g_i^{-1}A}\eta_W\left(\phi_{g_i^{-1}A,1}^{-1}\left(\phi_{g_i^{-1}A,1}(w)\right)\right) \\
	& = \widetilde{\rho}(g_i)P_{g_i^{-1}A}\eta_W\left(w\right) = \widetilde{\rho}(g_i)\left(\phi_{g_i^{-1}A,1}(w)\right) = \phi_{A,i}(w). \qedhere
	\end{align*}
\end{proof}

We want to show that $(\PPInd _H^GW,\widetilde{\rho})$ together with $\eta_W$ given by \eqref{s8eq:eta} satisfies the universal property of the partial induced representation.

\begin{remark}
	By \eqref{s8eq:defInd}, for every $h\in H$
	\[ \widetilde{\rho}(h) \circ \phi_{A,1} = \phi_{hA,1} \circ \rho(h) \]
	so that 
	\begin{equation}\label{s8eq:RhoPhi}
	\phi_{hA,1}^{-1} \circ \widetilde{\rho}(h)   =  \rho(h) \circ \phi_{A,1}^{-1}.
	\end{equation}
\end{remark}

Let $(U,\alpha)$ be an $H$-global $G$-partial representation and let $f\colon W\to \mathsf{Res}_H^GU\equiv U$ be an $H$-homomorphism. First of all, observe that if $F\colon \PPInd _H^GW \to U$ is any morphism of $G$-partial representations such that $F \circ \eta_W = f$, then for every $x=\phi_{A,i}(w)$, where $A\in\mathcal{P}_H\left(G/H\right)$ and $g_iH\subseteq A$, using \eqref{s8eq:etaPhi} we have
\begin{align*}
F\left(x\right) & = F\left(\widetilde{\rho}(g_i)P_{g_i^{-1}A}^{\widetilde{\rho}}\eta_W\left(\phi_{g_i^{-1}A,1}^{-1}\widetilde{\rho}\left(g_i^{-1}\right)\left(x\right)\right)\right)\\
&  = \alpha(g_i)P_{g_i^{-1}A}^{\alpha}\left(F\left(\eta_W\left(\phi_{g_i^{-1}A,1}^{-1}\widetilde{\rho}\left(g_i^{-1}\right)\left(x\right)\right)\right)\right) \\
& = \alpha(g_i)P_{g_i^{-1}A}^{\alpha}\left(f\left(\phi_{g_i^{-1}A,1}^{-1}\widetilde{\rho}\left(g_i^{-1}\right)\left(x\right)\right)\right).
\end{align*}
Therefore, we define $\hat{f}\colon \PPInd _H^GW\to U$ by setting, for $x=\phi_{A,i}(w)\in W^{A,i}$,
\[ \hat{f}(x)\coloneqq \alpha(g_i)P_{g_i^{-1}A}^\alpha f\left(\phi_{g_i^{-1}A,1}^{-1}\widetilde{\rho}(g_i^{-1})(x)\right).  \]

\begin{lemma}
	The map $\hat{f}$ is well defined, i.e.\ it does not depend on the chosen representatives $\{g_1,\ldots,g_r\}$. 
\end{lemma}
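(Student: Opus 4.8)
The plan is to show that the defining formula for $\hat{f}(x)$, applied to $x = \phi_{A,i}(w) \in W^{A,i}$, is unchanged if we replace the chosen left-coset representative $g_i$ of $g_iH$ by another representative $g_ih_0$ with $h_0 \in H$. First I would substitute $g_i' := g_ih_0$ into the formula. There are three occurrences of $g_i$ to track: the outer $\alpha(g_i')$, the projection $P^\alpha_{(g_i')^{-1}A}$, and the inner $\widetilde{\rho}((g_i')^{-1})$. Observe that $(g_i')^{-1}A = h_0^{-1}g_i^{-1}A$, and since $A \in \mathcal{P}_H(G/H)$ and $g_i^{-1}A$ is again a union of left cosets of $H$, Lemma~\ref{PAhisPA} (in the form $P^\alpha_{h^{-1}B} = P^\alpha_B$, which follows since $P^\alpha_{Bh} = P^\alpha_B$ for all $h \in H$ and $h_0^{-1}g_i^{-1}A$ and $g_i^{-1}A$ differ by left multiplication by $h_0^{-1}$ — equivalently, both are unions of the same left $H$-cosets) gives $P^\alpha_{(g_i')^{-1}A} = P^\alpha_{g_i^{-1}A}$.

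Next I would use the $H$-globality of both $(U,\alpha)$ and $(\PPInd_H^GW,\widetilde{\rho})$ via Lemma~\ref{s6lemma:H-lin}: since $h_0 \in H$,
\[
\alpha(g_ih_0) = \alpha(g_i)\alpha(h_0), \qquad \widetilde{\rho}((g_ih_0)^{-1}) = \widetilde{\rho}(h_0^{-1})\widetilde{\rho}(g_i^{-1}).
\]
Substituting, the candidate value with representative $g_i'$ becomes
\[
\alpha(g_i)\alpha(h_0)P^\alpha_{g_i^{-1}A}\, f\Bigl(\phi_{(g_i')^{-1}A,1}^{-1}\widetilde{\rho}(h_0^{-1})\widetilde{\rho}(g_i^{-1})(x)\Bigr).
\]
Now I would push $\alpha(h_0)$ past $P^\alpha_{g_i^{-1}A}$ using \eqref{s3eq:PAh}, namely $\alpha(h_0)P^\alpha_{g_i^{-1}A} = P^\alpha_{h_0 g_i^{-1}A}\alpha(h_0)$; but again $h_0 g_i^{-1}A$ and $g_i^{-1}A$ are the same union of left $H$-cosets, so $P^\alpha_{h_0 g_i^{-1}A} = P^\alpha_{g_i^{-1}A}$ (alternatively, $\widetilde{\rho}(h_0)$ is invertible and one checks directly that $P^{\widetilde\rho}_{g_i^{-1}A}(\PPInd_H^GW) = \bigoplus_{g_kH \subseteq g_i^{-1}A}W^{g_i^{-1}A,k}$, so that the relevant projection is genuinely intrinsic). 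Then I would move $\alpha(h_0)$ inside $f$ using the fact that $f$ is an $H$-homomorphism: $\alpha(h_0)f(-) = f(\rho(h_0)(-))$. This reduces the claim to the identity
\[
\rho(h_0)\,\phi_{(g_ih_0)^{-1}A,1}^{-1}\,\widetilde{\rho}(h_0^{-1}) = \phi_{g_i^{-1}A,1}^{-1}
\]
as maps on the image of $\widetilde{\rho}(g_i^{-1})(x)$, which is exactly \eqref{s8eq:RhoPhi} with $h$ replaced by $h_0^{-1}$ and $A$ replaced by $g_i^{-1}A$ (so $hA$ there becomes $h_0^{-1}g_i^{-1}A = (g_ih_0)^{-1}A$), after taking inverses appropriately. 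Assembling these, the two candidate values coincide.

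**The main obstacle** I anticipate is bookkeeping rather than conceptual: one must be careful that $\widetilde\rho(g_i^{-1})(x)$ indeed lands in $W^{g_i^{-1}A,1}$ (it does, by \eqref{s8eq:defInd}, since $gg_i = g_jh$ with $j$ determined by $gg_iH$, and here taking $g = g_i^{-1}$ forces $j = 1$), so that all the $\phi^{-1}$'s and $\rho(h)$'s are applied to elements in the correct summands and the identity \eqref{s8eq:RhoPhi} is genuinely applicable. A secondary subtlety is the repeated use of the fact that $P^\alpha_B$ depends only on the set of left $H$-cosets comprising $B$ (Lemma~\ref{PAhisPA} and Corollary~\ref{PAnonzero}), which must be invoked cleanly each time a left multiplication by an element of $H$ appears; I would state this once at the outset and reuse it. No step requires anything beyond Lemmas~\ref{lem:idemp}, \ref{s6lemma:H-lin}, \ref{PAhisPA}, equation~\eqref{s3eq:PAh}, \eqref{s8eq:RhoPhi}, and the $H$-linearity of $f$.
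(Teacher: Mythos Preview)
Your overall strategy is the same as the paper's, and the skeleton of the computation is correct, but there is a genuine error in the justification that you should fix. You invoke Lemma~\ref{PAhisPA} to claim $P^\alpha_{h_0^{-1}g_i^{-1}A} = P^\alpha_{g_i^{-1}A}$ and later $P^\alpha_{h_0 g_i^{-1}A} = P^\alpha_{g_i^{-1}A}$, arguing that ``both are unions of the same left $H$-cosets.'' This is false: Lemma~\ref{PAhisPA} concerns \emph{right} multiplication ($P_{Bh}=P_B$), and for a union $B$ of left $H$-cosets the set $hB$ is in general a \emph{different} union of left $H$-cosets (e.g.\ $h(g_jH)=(hg_j)H\neq g_jH$). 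So neither equality follows from that lemma, and no result in the paper gives $P_{hB}=P_B$ for left multiplication by $h\in H$.

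What saves you is that the two wrong simplifications cancel. The clean (and correct) route---which is exactly what the paper does---is to skip the preliminary simplification of $P^\alpha_{(g_i')^{-1}A}$ entirely and apply \eqref{s3eq:PAh} directly to the pair $\alpha(h_0)$ and $P^\alpha_{h_0^{-1}g_i^{-1}A}$:
\[
\alpha(h_0)\,P^\alpha_{h_0^{-1}g_i^{-1}A} \;=\; P^\alpha_{h_0\cdot h_0^{-1}g_i^{-1}A}\,\alpha(h_0) \;=\; P^\alpha_{g_i^{-1}A}\,\alpha(h_0).
\]
After this single correct step, the rest of your argument (pushing $\alpha(h_0)$ through $f$ by $H$-linearity and then invoking \eqref{s8eq:RhoPhi}) goes through verbatim and matches the paper's proof.
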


\begin{proof}
	Given $h\in H$, replacing $g_i$ by $g_ih$ we get
	\begin{align*}
	\alpha(g_ih)P_{h^{-1}g_i^{-1}A}^\alpha f\left(\phi_{h^{-1}g_i^{-1}A,1}^{-1}\widetilde{\rho}(h^{-1}g_i^{-1})(x)\right) & \stackrel{\phantom{(12.4)}}{=} \alpha(g_i)\alpha(h)P_{h^{-1}g_i^{-1}A}^\alpha f\left(\phi_{h^{-1}g_i^{-1}A,1}^{-1}\widetilde{\rho}(h^{-1}g_i^{-1})(x)\right)\\
	& \hspace{2pt}\stackrel{\eqref{s3eq:PAh}}{=}    \alpha(g_i)P_{g_i^{-1}A}^\alpha\alpha(h) f\left(\phi_{h^{-1}g_i^{-1}A,1}^{-1}\widetilde{\rho}(h^{-1}g_i^{-1})(x)\right)\\
	& \stackrel{\phantom{(12.4)}}{=}    \alpha(g_i)P_{g_i^{-1}A}^\alpha f\left(\rho(h)\phi_{h^{-1}g_i^{-1}A,1}^{-1}\widetilde{\rho}(h^{-1}g_i^{-1})(x)\right)\\
	& \stackrel{\eqref{s8eq:RhoPhi}}{=} \alpha(g_i)P_{g_i^{-1}A}^\alpha f\left(\phi_{g_i^{-1}A,1}^{-1}\widetilde{\rho}(h)\widetilde{\rho}(h^{-1}g_i^{-1})(x)\right)\\ 
	& \stackrel{\phantom{(12.4)}}{=}    \alpha(g_i)P_{g_i^{-1}A}^\alpha f\left(\phi_{g_i^{-1}A,1}^{-1}\widetilde{\rho}(g_i^{-1})(x)\right). \qedhere
	\end{align*}
\end{proof}

\begin{lemma}
	The map $\hat{f}$ is a morphism of $G$-partial representations. 
\end{lemma}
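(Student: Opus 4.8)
The plan is to check directly that $\hat f$ intertwines the two actions, that is, that $\hat f\bigl(\widetilde{\rho}(g)(x)\bigr) = \alpha(g)\bigl(\hat f(x)\bigr)$ for every $g\in G$. Since $\hat f$ is defined summand-wise and extended linearly, it suffices to verify this on a generic basis vector $x=\phi_{A,i}(w)$ with $A\in\mathcal{P}_H(G/H)$ and $g_iH\subseteq A$ (the preceding lemma lets us fix any set of coset representatives). First I would record the simplification $\widetilde{\rho}(g_i^{-1})(\phi_{A,i}(w)) = \phi_{g_i^{-1}A,1}(w)$, which follows from \eqref{s8eq:defInd} because $g_i^{-1}g_i = 1_G = g_1\cdot 1_H$; hence $\hat f(\phi_{A,i}(w)) = \alpha(g_i)\,P^\alpha_{g_i^{-1}A}\,f(w)$, and by \eqref{s3eq:PAh} this equals $P^\alpha_A\,\alpha(g_i)\,f(w)$, so $\hat f(\phi_{A,i}(w))\in U^A$.

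Next, split into the two cases of \eqref{s8eq:defInd}. If $g^{-1}H\not\subseteq A$, equivalently $g^{-1}\notin A$ since $A$ is a union of left cosets of $H$, then $\widetilde{\rho}(g)(x)=0$ and so the left-hand side is $\hat f(0)=0$, while the right-hand side is $\alpha(g)\bigl(P^\alpha_A(\cdots)\bigr)=0$ by \eqref{eq:gP_A=0}. In the case $g^{-1}H\subseteq A$, write $gg_i = g_jh$ with $h\in H$; then $g_jH = gg_iH\subseteq gA$, so $\widetilde{\rho}(g)(x) = \phi_{gA,j}\bigl(\rho(h)(w)\bigr)$ is a genuine basis vector. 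Applying the formula for $\hat f$ to it, using again $g_j^{-1}g_j = 1_G = g_1\cdot 1_H$ together with $g_j^{-1}gA = h\,g_i^{-1}A$, one gets $\hat f\bigl(\widetilde{\rho}(g)(x)\bigr) = \alpha(g_j)\,P^\alpha_{h g_i^{-1}A}\,f\bigl(\rho(h)(w)\bigr) = \alpha(g_j)\,P^\alpha_{h g_i^{-1}A}\,\alpha(h)\,f(w)$ by $H$-linearity of $f$, and then $\alpha(g_j)\,\alpha(h)\,P^\alpha_{g_i^{-1}A}\,f(w)$ after moving the projection across $\alpha(h)$ via \eqref{s3eq:PAh}.

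On the other side, $\alpha(g)\bigl(\hat f(x)\bigr) = \alpha(g)\,\alpha(g_i)\,P^\alpha_{g_i^{-1}A}\,f(w)$, so it remains to identify $\alpha(g)\,\alpha(g_i)$ with $\alpha(g_j)\,\alpha(h)$ once composed with $P^\alpha_{g_i^{-1}A}$. Writing $g = (g_jh)g_i^{-1}$ and applying \ref{item:PR3} gives $\alpha(g)\,\alpha(g_i) = \alpha(g_jh)\,\alpha(g_i^{-1})\,\alpha(g_i)$, which equals $\alpha(g_j)\,\alpha(h)\,\alpha(g_i^{-1})\,\alpha(g_i)$ by Lemma~\ref{s6lemma:H-lin} since $h\in H$; finally, because $1_G\in A$ forces $g_i^{-1}\in g_i^{-1}A$, the identity $\alpha(g_i^{-1})\alpha(g_i)P^\alpha_{g_i^{-1}A} = P^\alpha_{g_i^{-1}A}$ (from Section~\ref{sec:GeneralProperties}) closes the computation. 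I do not expect any genuine obstacle here: the content is a careful bookkeeping of which subsets of $G$ occur ($g_jH\subseteq gA$, $g_j^{-1}gA = h g_i^{-1}A$, $g_i^{-1}\in g_i^{-1}A$) together with the right placement of the partial-representation relation \ref{item:PR3}, so that the idempotent $P^\alpha_{g_i^{-1}A}$ can be carried along while $\alpha(g)\alpha(g_i)$ is split.
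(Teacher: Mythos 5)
Your proposal is correct and follows essentially the same route as the paper: a direct check of the intertwining identity on basis vectors $\phi_{A,i}(w)$, split into the cases $g^{-1}H\subseteq A$ and $g^{-1}H\not\subseteq A$, using \ref{item:PR3} together with Lemma~\ref{s6lemma:H-lin} to rewrite $\alpha(g)\alpha(g_i)$ as $\alpha(g_j)\alpha(h)\alpha(g_i^{-1})\alpha(g_i)$, absorbing $\alpha(g_i^{-1})\alpha(g_i)$ into $P^\alpha_{g_i^{-1}A}$ via $g_i^{-1}\in g_i^{-1}A$, and commuting $\alpha(h)$ past the idempotent with \eqref{s3eq:PAh}. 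The only (harmless) cosmetic difference is that you simplify $\phi_{g_i^{-1}A,1}^{-1}\widetilde{\rho}(g_i^{-1})(\phi_{A,i}(w))=w$ at the outset and then invoke the $H$-equivariance of $f$ directly, where the paper carries the $\phi$'s through the computation and uses \eqref{s8eq:RhoPhi} instead.
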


\begin{proof}
	Given $g\in G$, let $gg_i=g_jh$ with $h\in H$. Assume initially that $g^{-1}\in A$ and pick $0\neq x=\phi_{A,i}(w)$ arbitrarily. We have that 
	\begin{enumerate}[label=(\alph*),ref=(\alph*)]
		\item\label{s11item:1} $g_iH\subseteq A$, and $g_i^{-1}\in g_i^{-1}A$;
		\item\label{s11item:2} $g_j^{-1}gg_i=h\text{ and }g^{-1}g_jH=g_ih^{-1}H\subseteq A$;
		\item\label{s11item:3} $g_j^{-1}g_j=g_1\text{ and } g_jH \subseteq gA$;
		\item\label{s11item:4} $gg_i=g_jh\text{ and } g^{-1}H \subseteq A$
	\end{enumerate} 
	and therefore
	\begin{align*}
	\alpha(g)\hat{f}(x)& = \alpha(g_jhg_i^{-1})\alpha(g_i)P_{g_i^{-1}A}^\alpha f\left(\phi_{g_i^{-1}A,1}^{-1}\widetilde{\rho}(g_i^{-1})(x)\right) = \alpha(g_j)\alpha(h)\alpha(g_i^{-1})\alpha(g_i)P_{g_i^{-1}A}^\alpha f\left(\phi_{g_i^{-1}A,1}^{-1}\widetilde{\rho}(g_i^{-1})(x)\right)\\
	& \overset{\ref{s11item:1}}{=} \alpha(g_j)\alpha(h) P_{g_i^{-1}A}^\alpha f\left(\phi_{g_i^{-1}A,1}^{-1}\widetilde{\rho}(g_i^{-1})(x)\right) \overset{\eqref{s3eq:PAh}}{=} \alpha(g_j) P_{hg_i^{-1}A}^\alpha\alpha(h) f\left(\phi_{g_i^{-1}A,1}^{-1}\widetilde{\rho}(g_i^{-1})(x)\right)\\
	& = \alpha(g_j) P_{g_j^{-1}gA}^\alpha f\left(\rho(h)\phi_{g_i^{-1}A,1}^{-1}\widetilde{\rho}(g_i^{-1})(x)\right) \overset{\eqref{s8eq:RhoPhi}}{=} \alpha(g_j) P_{g_j^{-1}gA}^\alpha f\left( \phi_{hg_i^{-1}A,1}^{-1}\widetilde{\rho}(h)\widetilde{\rho}(g_i^{-1})(x)\right)\\
	& = \alpha(g_j) P_{g_j^{-1}gA}^\alpha f\left( \phi_{g_j^{-1}gA,1}^{-1}\widetilde{\rho}(g_j^{-1}g)(x)\right) = \alpha(g_j) P_{g_j^{-1}gA}^\alpha f\left( \phi_{g_i^{-1}gA,1}^{-1}\widetilde{\rho}(g_j^{-1}g)(\phi_{A,i}(w))\right)\\
	& \overset{\ref{s11item:2}}{=} \alpha(g_j) P_{g_j^{-1}gA}^\alpha f\left( \phi_{g_j^{-1}gA,1}^{-1} \phi_{g_j^{-1}gA,1}(\rho(h)(w))\right) \overset{\ref{s11item:3}}{=} \alpha(g_j) P_{g_j^{-1}gA}^\alpha f\left( \phi_{g_j^{-1}gA,1}^{-1} \widetilde{\rho}(g_j^{-1}) \phi_{gA,j}(\rho(h)(w))\right)\\
	& = \hat{f}(\phi_{gA,j}(\rho(h)(w))) \overset{\ref{s11item:4}}{=} \hat{f}(\widetilde{\rho}(g)(\phi_{A,i}(w))) = \hat{f}(\widetilde{\rho}(g)(x)).
	\end{align*}
	If, on the other hand, $g^{-1}\notin A$, then by definition $\widetilde{\rho}(g)(x)=0$, but also $\alpha(g_j) P_{g_j^{-1}gA}^\alpha =0$, since $g_j^{-1}\notin g_j^{-1}gA$. So $\alpha(g)\hat{f}(x)=0$ follows from the first few lines of the same computation.
\end{proof}

By construction, $\hat{f}$ is the unique morphism of $G$-partial representations such that $\hat{f}\circ \eta_W = f$. This proves that $(\PPInd_H^GW,\tilde{\rho})$ is the partial induction of $W$, as we wanted. Notice that in particular we established the following \emph{Frobenius reciprocity}:
\begin{equation}\label{s11eq:Frobenius}
\Hom_G(\PInd_H^GW,U) \cong  \Hom_H(W,\Res_H^GU).
\end{equation}

We conclude this section with the following remark, showing some advantages of working with the groupoid algebra $\C\Gamma_H(G)$ and its modules.

\begin{remark}
	Assume we are given a finite group $G$ and two subgroups $H,K$.
	\begin{enumerate}[label=(\alph*)]
		\item Using the restriction to $H$ of the map $\mu_p\colon G\to \C\Gamma_H(G)$ defined in \eqref{s7eq:mu}, we can see $\C\Gamma_H(G)$ as a right $\C[H]$-module (cf.\ Lemma~\ref{witnessinv}). Given a global representation $W$ of $H$, i.e.\ a left $\C[H]$-module, it turns out that $\PInd_H^GW \cong \C\Gamma_H(G)\tensor{\C [H]}W$ as left $\C\Gamma_H(G)$-modules, and hence as $H$-global $G$-partial representations. 
		
		\item More generally, given a $\left(\C\Gamma_H(G),\C [K]\right)$-bimodule $Q$ and a left $\C [K]$-module $W$, $Q\otimes_{\C [K]} W$ is a left $\C\Gamma_H(G)$-module and so an $H$-global $G$-partial representation. In the particular situation of Section~\ref{sec:genconstruction}, we consider a subset $A\subseteq G$ which is a union of $(H,K)$-double cosets (\eg $A = HK \subset G$) and the set $A/K$ of left cosets of $K$ contained in $A$. Then we can consider the partial action of $G$ on $A$ given by restriction of the global action of $G$ on $G$ by left multiplication. 
		By linearization as in Definition \ref{s1def:linearization}, $\C[A]$ becomes an $H$-global $G$-partial representation and so a left $\C\Gamma_H(G)$-module. It is also a right $\C [K]$-module and the two structures are compatible, whence it is a $\left(\C\Gamma_H(G),\C [K]\right)$-bimodule. Now,
		\[
		W_A \coloneqq \bigoplus_{g_iK\in A/K} W^{g_i} \cong \C [A]\otimes_{\C [K]} W,
		\]
		which is then an $H$-global $G$-partial representation. This allows us to recover the general construction performed in Section \ref{sec:genconstruction} and the induction construction \eqref{eq:induction} from \S\ref{sec:irreducibles}.
	\end{enumerate}
	We omit the details.
\end{remark}

In Section~\ref{sec:SnSnminus1} we will show some interesting computations of partial induced representations.

\section{The point of view of inverse semigroups}
\label{sec:semigroup}

In this section we define a semigroup $S_H(G)$ which is closely related to the semigroup $S(G)$ of Exel \cite{Exel-semigroup-PAMS} and that plays the same role for $H$-global $G$-partial representations as $S(G)$ does for $G$-partial representations.

\medskip

We can define a semigroup $S_H(G)\coloneqq \{[(A,g)]\mid (A,g)\in \Gamma_H(G)\}$ by setting
\[ [(A,g)]\cdot [(B,h)]\coloneqq  
[(h^{-1}A\cup B,gh)]  . \]
Notice that this operation is well defined as $h^{-1}g^{-1}\in h^{-1}A$.

It is easy to check that this gives indeed a semigroup (one just needs to check the associativity).
\begin{remark}
	Consider the monoid $S\coloneqq (\mathcal{P}(G/H),\cup)$, where $\mathcal{P}(G/H)\coloneqq \{A\subseteq G\mid Ah=A\text{ for all }h\in H\}$. Now $G$ has a right action on $\mathcal{P}(G/H)$ defined by $A\cdot g\coloneqq g^{-1}A$ for every $A\in \mathcal{P}(G/H)$ and $g\in G$. The \emph{semidirect product}  $S\ast  G$ is defined by endowing the set $S \times G$ with the composition law
	\[ (A,g)(B,h)  \coloneqq ((A\cdot h)\cup B,gh)=(h^{-1}A\cup B,gh) . \]
	Now $S\ast  G$ is a semigroup and clearly $S_H(G)$ is a subsemigroup of $S\ast  G$ (it is not a submonoid because $[(\varnothing,1_G)]$ is not in $S_H(G)$ as $H\nsubseteq \varnothing$).
	
	Observe also that $S_H(G)$ is a subsemigroup of $S_{\{1_G\}}(G)$ for any subgroup $H$ of $G$.
\end{remark}

\begin{remark}
	It is easy to see that for $H=\{1_G\}$ our semigroup $S_{\{1_G\}}(G)$ is anti-isomorphic to the semigroup $S(G)$ of Exel \cite{Exel-semigroup-PAMS} (cf. also \cite[Theorem 2.4]{KellenLawson}). 
\end{remark}

The semigroup $S_H(G)$ is in fact an inverse semigroup: given $[(A,g)]\in S_H(G)$, it is straightforward to check that its unique inverse is $[(gA,g^{-1})]$:
\[  [(A,g)]\cdot [(gA,g^{-1})]\cdot [(A,g)] = [(gA,1_G)] \cdot [(A,g)]=[(A,g)]  \]
and
\[  [(gA,g^{-1})]\cdot [(A,g)]\cdot [(gA,g^{-1})] = [(A,1_G)] \cdot [(gA,g^{-1})]=[(gA,g^{-1})].  \]
The idempotents of $S_H(G)$ are clearly the elements of the form $[(A,1_G)]$ with $A\in \mathcal{P}_H(G)$.

The natural partial order (cf.\ \cite[page 21]{Lawson}) of the inverse semigroup $S_H(G)$ is given by containment in the first component and equality in the second one: given $[(A,g)],[(B,h)]\in S_H(G)$, $[(B,h)]\leq [(A,g)]$ if and only if there exists $[(C,1_G)]$ such that 
\[ [(B,h)]=[(C,1_G)]\cdot [(A,g)]=[(g^{-1}C\cup A,g)] \]
so that $h=g$ and $B=g^{-1}C\cup A$, if and only if there exists $[(D,1_G)]$ such that 
\[ [(B,h)]= [(A,g)]\cdot [(D,1_G)] = [(D\cup A,g)] \]
so that $h=g$ and $B=D\cup A$.
These conditions are equivalent to $g=h$ and $B\supseteq A$.
\begin{remark}
	It is easy to check that the groupoid associated to the inverse semigroup $S_H(G)$ as in \cite[Proposition 4 of \S3.1]{Lawson} is exactly $\Gamma_H(G)$.
\end{remark}

By \cite[Theorem~4.2]{Steinberg-Moebius1}, we know that the map $f\colon \C S_H(G)\to \C\Gamma_H(G)$ defined by \[f([(A,g)])\coloneqq \sum_{(B,h)\leq (A,g)}(B,h)=\sum_{B\supseteq A}(B,g)\] is an isomorphism of algebras: to check that it is a homomorphism, we compute
\begin{align*}
f([(A,g)])\cdot f([(B,h)]) & = \sum_{C\supseteq A}(C,g) \sum_{D\supseteq B}(D,h) = \sum_{C\supseteq A}\sum_{D\supseteq B}(C,g) \cdot (D,h) \\
& = \mathop{\sum_{D\supseteq B}}_{hD\supseteq A}(D,gh) = \mathop{\sum_{D\supseteq B}}_{D\supseteq h^{-1}A}(D,gh) = \sum_{D\supseteq h^{-1}A\cup B}(D,gh)\\
& = f([(h^{-1}A\cup B,gh)]) = f([(A,g)]\cdot[(B,h)]).
\end{align*}
The inverse function of $f$ can be given explicitly by using M\"{o}bius inversion: cf.\ \cite{Steinberg-Moebius1}.

As we have seen in Theorem \ref{thm:iso}, the groupoid algebra $\C\Gamma_H(G)$ is isomorphic to the partial group algebra $\C^{H}_{par}G$. In view of the above discussion, we know that the groupoid algebra is also isomorphic to the semigroup algebra $\C S_H(G)$. By joining the two isomorphisms, we find out that $\C^{H}_{par}G \cong \C S_H(G)$ via the explicit isomorphism
\[
\C S_H(G) \to \C^{H}_{par}G, \qquad [(A,g)] \mapsto \sum_{B\supseteq A}[g][P_B].
\]
In view of Theorem \ref{thm:Iso}, this induces an isomorphism of categories between the category $\GPRep{G}{H}$ of $H$-global $G$-partial representations and the category $\C S_H(G)$-$\mathsf{Mod}$ of $\C S_H(G)$-modules. Though we will not do it here, it is now clear that all the results that we found in this work about $H$-global $G$-partial representations can be recast and understood in terms of the representation theory of the inverse semigroup $S_H(G)$ (cf.\ \cite{Steinberg-Moebius1,Steinberg-Moebius2}).

\begin{remark} \label{rem:S_H(G)_rep}
	Notice that $S_H(G)$ admits a representation in $M_{[G:H]}(\C[(\{\bar{0},\bar{1}\},\cdot)])$, where $(\{\bar{0},\bar{1}\},\cdot)$ is simply the multiplicative semigroup defined by $\bar{0}\cdot \bar{1}=\bar{0}\cdot\bar{0}=\bar{1}\cdot\bar{0}=\bar{0}$ and $\bar{1}\cdot\bar{1}=\bar{1}$: given $(A,g)$, the action of a $g\in G$ on $G/H = \{g_j H \mid j = 1, \dots, [G:H]\}$ gives a permutation matrix in $M_{[G:H]}(\C)$, i.e. column \(j\) represents the image of \(g_j H\) under \(g\). Now we can replace the $1$ in each column \(j\) by $\bar{0}$ if \(g_j H \subseteq A\), and by $\bar{1}$ if \(g_j H \nsubseteq A\). 
	
	Explicitly, this gives the following: \([(A, g)] \in S_H(G)\) is mapped to the matrix with entries 
	\[m_{ij} = \begin{cases}
	\bar{0} &\text{if } gg_j H = g_i H \text{ and } g_j H \subseteq A \\
	\bar{1} &\text{if } gg_j H = g_i H \text{ and } g_j H \nsubseteq A \\
	0 &\text{if } gg_j H \neq g_i H
	\end{cases}.\]
	Observe that each row and column has exactly one nonzero entry. One may check that this is indeed a morphism. 
\end{remark}

\section{An application: $\Sym_{n-1}\subset \Sym_n$}

In this section we apply our general theory to the important special case where $G$ is the symmetric group $\Sym_n$ and $H$ is the subgroup $\mathfrak{S}_{n}^{[1]}\cong \mathfrak{S}_{1}\times \mathfrak{S}_{n-1} \equiv \mathfrak{S}_{n-1}$ of the permutations fixing $1$.
This will provide a natural extension of the classical representation theory of $\Sym_n$.

\subsection{$\mathfrak{S}_{n-1}$-global $\mathfrak{S}_n$-partial representation theory} \label{sec:SnSnminus1}

Henceforth we use the notation introduced in Example~\ref{ex:SkSnk_action}. Moreover, we use freely classical definitions and results from the representation theory of $\Sym_n$ and its combinatorics: for these we refer to the standard \cite[Chapter~7]{Stanley-Book-1999}. 

\medskip

We start by looking at the algebra $\C\Gamma_{\Sym_{n-1}}(\Sym_{n})$. In order to apply Theorem~\ref{thm:algebra_decomposition} we need to understand the connected components of $\Gamma_{\Sym_{n-1}}(\Sym_{n})$ and the corresponding isotropy groups.

We already identified the action of $\Sym_n$ on the cosets $\Sym_n/\Sym_{n-1}$ with the defining action of $\Sym_n$ on the set $\binom{[n]}{1}=[n]$. So under this identification $\mathcal{P}_{\Sym_{n-1}}(\Sym_n/\Sym_{n-1})\equiv\{A\subseteq [n]\mid 1\in A \}$. Given $A\in \mathcal{P}_{\Sym_{n-1}}(\Sym_n/\Sym_{n-1})$ of cardinality $k\cdot |\Sym_{n-1}|=k\cdot(n-1)!$ with $k\geq 1$, it is clear that its stabilizer is $\Sym_n^{A}\cong \Sym_k\times \Sym_{n-k}$ (here we identify $A$ with the corresponding subset of $[n]$). Therefore $m_A=|A|/|\Sym_n^{A}|=\binom{n-1}{k-1}$, which is precisely the number of elements $A\in \mathcal{P}_{\Sym_{n-1}}(\Sym_n/\Sym_{n-1})$ of cardinality $k\cdot |\Sym_{n-1}|=k\cdot(n-1)!$. Hence for each $1\leq k\leq n$ there is precisely one connected component. So we can apply Theorem~\ref{thm:algebra_decomposition} to get the formula
\begin{equation}
\C\Gamma_{\Sym_{n-1}}(\Sym_{n})\cong \prod_{k=1}^nM_{\binom{n-1}{k-1}}(\C[\Sym_k\times \Sym_{n-k}]).
\end{equation}
Notice that this formula is in agreement with formula (\ref{eq:cardinality}) of the dimension of $\mathbb{C}\Gamma_{\mathfrak{S}_{n-1}}(\mathfrak{S}_n)$: indeed
\begin{align*} 
\sum_{k=1}^n\binom{n-1}{k-1}^2k!(n-k)! & =(n-1)!\sum_{k=1}^nk\binom{n-1}{k-1}\\
& =(n-1)!(n+1)2^{n-2}\\
& =2^{n-2}(n!+(n-1)!)\\
& =2^{|\mathfrak{S}_n/\mathfrak{S}_{n-1}|-2}(|\mathfrak{S}_n|+|\mathfrak{S}_{n-1}|).
\end{align*}

\begin{example}\label{ex:graphics2}
	Recall that $\Sym_3^{[1]} \equiv \Sym_2 = \{\mathrm{id},(2,3)\}$, $\Sym_3/\Sym_2 = \{\Sym_2,(1,2)\Sym_2,(1,3)\Sym_2\} \leftrightarrow \{1,2,3\} = [3]$ and that $\sigma\cdot(1,n)\Sym_2 = (1,m)\Sym_2$ if and only if $\sigma(n)=m$.
	In order to help intuition and visualization, the following picture represents the groupoid $\Gamma_{\Sym_2}(\Sym_3)$. 
	\[
	\begin{gathered}
	\xymatrix @!0 @C=60pt @R=65pt {
		{\{1\}} \ar @(ul,ur) []^-{\mathrm{id}} \ar @(dr,dl) []^-{(2,3)} & & {[3]} \ar @(ul,ur)[]^-{\mathrm{id}} \ar @(ur,r)[]^-{(1,2)} \ar @(r,dr)[]^-{(1,3)} \ar @(dr,dl)[]^-{(2,3)} \ar @(dl,l)[]^-{(1,2,3)} \ar @(l,ul)[]^-{(1,3,2)} \\
		{\left\{1,2\right\}} \ar @(ul,ur)[]^-{\mathrm{id}} \ar @(dl,dr)[]_-{(1,2)} \ar @/^1.3pc/[rr]_-{(2,3)}  \ar @/^2pc/[rr]^-{(1,3,2)} & & {\left\{1,3\right\}} \ar @(dr,dl)[]^-{(1,3)} \ar @(ur,ul)[]_-{\mathrm{id}} \ar @/^1.3pc/[ll]_-{(2,3)} \ar @/^2pc/[ll]^-{(1,2,3)} 
	}
	\end{gathered}
	\]
	It makes also evident that $\C\Gamma_{\Sym_2}(\Sym_3)\cong \C [\Sym_2] \times \C [\Sym_3] \times M_2(\C[\Sym_2\times \Sym_1])$, where $\Sym_2\times \Sym_1=\Sym_3^{\{3\}}=\{\mathrm{id},(1,2)\}$.
\end{example}

Now we want to apply Theorem~\ref{thm:irredsHglGpar} to construct all the irreducible $\Sym_{n-1}$-global $\Sym_n$-partial representations. 

Recall (cf.\ \cite[Chapter~7]{Stanley-Book-1999}) that the irreducible representations of $\Sym_{m}$ are indexed by the partitions of $m$: for any $m\geq 0$, let $\{V_{\mu}\}_{\mu\vdash m}$ be a complete set of inequivalent irreducible representations of $\Sym_{m}$ (where $\mu\vdash m$ means ``$\mu$ partition of $m$''). Then, given $[k]\subseteq [n]\equiv \Sym_{n}/\Sym_{n-1}$ with $k\geq 1$, a complete set of inequivalent irreducible representations of $\Sym_k\times \Sym_{n-k}$ is $\{V_\lambda\otimes V_\mu \}_{\lambda\vdash k, \mu\vdash n-k}$, so that a complete set of inequivalent irreducible $\Sym_{n-1}$-global $\Sym_n$-partial representations is given by
\begin{equation}
V_{(\lambda,\mu)}\coloneqq \Ind_{[k]}(V_\lambda\otimes V_\mu)\quad \text{ for }1\leq k\leq n,\lambda\vdash k, \mu\vdash n-k.
\end{equation}
In particular we have the formula for the dimension
\begin{equation}
\dim_\C V_{(\lambda,\mu)} = \binom{n-1}{k-1}f^\lambda f^{\mu},
\end{equation}
where for any partition $\nu$ we denote by $f^{\nu}$ the number of standard Young tableaux of shape $\nu$: this is a classical formula for the dimension of $V_\nu$ (cf.\ \cite[Chapter~7]{Stanley-Book-1999}).

\begin{remark}
	Specializing Remark~\ref{rem:S_H(G)_rep} to this situation, we get a representation of $S_{\Sym_{n-1}}(\Sym_n)$ in $M_n((\{\bar{0},\bar{1}\},\cdot))$, which is reminiscent of the hyperoctahedral group $\Sym_2 \wr \Sym_n$ viewed as the group of signed permutations. This is the only clue that we see it could have hinted to the striking similarity between the theory of $\Sym_{n-1}$-global $\Sym_n$-partial representations and the well-known representation theory of $\Sym_2 \wr \Sym_n$.
\end{remark}

\medskip

Now to study $\Res_{\Sym_{n-1}}^{\Sym_n}V_{(\lambda,\mu)}$, we need to better understand the construction of $\Ind_{[k]}(V_\lambda\otimes V_\mu)$.

Given $[k]\subseteq [n]\equiv \Sym_{n}/\Sym_{n-1}$ with $k\geq 1$, the corresponding subset of $\Sym_n$ is $A_k\coloneqq \{\sigma\in \Sym_n\mid \sigma(1)\in [k] \}$ so that $A_k^{-1}=\{\sigma\in \Sym_n\mid 1\in \sigma([k]) \}$. Since for every $\sigma\in A_k^{-1}$ we have $\sigma = \tau(1,\sigma^{-1}(1))$, where $\tau = \sigma (1,\sigma^{-1}(1)) \in \Sym_n^{[1]}$ and $(1,\sigma^{-1}(1))\in \Sym_n^{[k]}$, it is easy to see that \[ A_k^{-1}=\Sym_n^{[1]}\Sym_n^{[k]}=\Sym_{n-1} (\Sym_k\times \Sym_{n-k})=\Sym_{n-1} 1_{\Sym_n}(\Sym_k\times \Sym_{n-k}),\]
so that $A_k=(\Sym_k\times \Sym_{n-k})\Sym_{n-1}$.
Now, $(\Sym_k\times \Sym_{n-k})\cap \Sym_{n-1}=\Sym_n^{[k]}\cap \Sym_n^{[1]}\cong \Sym_1\times\Sym_{k-1}\times \Sym_{n-k}\equiv \Sym_{k-1}\times \Sym_{n-k}$, so we deduce from Example~\ref{ex:Hrestriction} that
\[ \mathsf{Res}_{\mathfrak{S}_{n-1}}^{\mathfrak{S}_n}V_{(\lambda,\mu)}=\Res_{\Sym_{n-1}}^{\Sym_n}(\Ind_{(\Sym_k\times \Sym_{n-k})\Sym_{n-1}}(V_\lambda\otimes V_\mu))\cong \Ind_{ \Sym_{k-1}\times \Sym_{n-k}}^{\Sym_{n-1}}(\Res_{\Sym_{k-1}\times \Sym_{n-k}}^{\Sym_{k}\times \Sym_{n-k}}(V_\lambda\otimes V_\mu)).  \]
Now we can use the classical formulas for the restriction and the induction of irreducibles of $\Sym_n$ to deduce the decomposition into irreducibles
\begin{equation} \label{eq:Sn-1restriction}
\mathsf{Res}_{\mathfrak{S}_{n-1}}^{\mathfrak{S}_n}V_{(\lambda,\mu)}\cong \bigoplus_{\lambda^1\to\lambda} \mathsf{Ind}_{\mathfrak{S}_{k-1}\times \mathfrak{S}_{n-k}}^{\mathfrak{S}_{n-1}} \left(V_{\lambda^1}\otimes V_{\mu}\right) \cong \bigoplus_{\nu\vdash n-1}  V_{\nu}^{\oplus d_{\lambda \mu}^\nu}
\end{equation}
with 
\begin{equation} \label{eq:dlambdamunu}
d_{\lambda \mu}^\nu\coloneqq \sum_{\lambda^1\to\lambda}c_{\lambda^1 \mu}^\nu, 
\end{equation}
where $c_{\lambda^1 \mu}^\nu$ are the \emph{Littlewood-Richardson coefficients}, and $\nu^1\to\nu$ indicates that $\nu$ covers $\nu^1$ in the Young lattice (i.e.\ $\nu^1$ is obtained by removing a corner from $\nu$): see \cite[Chapter~7]{Stanley-Book-1999} for the missing definitions and results.
\begin{remark}
	Formula \eqref{eq:Sn-1restriction} reduces to the corresponding Pieri rule when $V_{(\lambda,\mu)}$ is $\mathfrak{S}_n$-global, i.e.\ when $k = n$ whence $\mu=\varnothing$.
\end{remark}

\medskip

Also, by Theorem~\ref{thm:irredglobalization}, the globalization of $V_{(\lambda,\mu)}$ is given by \begin{equation}
\Ind_{ \mathfrak{S}_{k}\times \mathfrak{S}_{n-k}}^{\Sym_n}(V_\lambda\otimes V_\mu)\cong \bigoplus_{\rho\vdash n}V_\rho^{\oplus c_{\lambda \mu}^\rho},
\end{equation}
where we used again the same classical formulas.

\medskip

Finally, we want to apply the Frobenius reciprocity (\ref{s11eq:Frobenius}) to get formulas of the partial induction of an irreducible representation of $\Sym_{n-1}$ to $\Sym_n$. Given $\nu\vdash n-1$ and the corresponding irreducible $V_\nu$ of $\Sym_{n-1}$, we want to find a formula for $\PInd_{\Sym_{n-1}}^{\Sym_n}V_\nu$. Given $\lambda\vdash k$ and $\mu\vdash n-k$ with $1\leq k\leq n$, by Frobenius reciprocity we have
\[ \Hom_{\Sym_n}(\PInd_{\Sym_{n-1}}^{\Sym_n}V_\nu,V_{(\lambda,\mu)})\cong \Hom_{\Sym_{n-1}}(V_\nu,\Res_{\Sym_{n-1}}^{\Sym_n}V_{(\lambda,\mu)}).  \]
Combining this with \eqref{eq:Sn-1restriction} we get immediately the formula
\begin{equation} 
\mathsf{PInd}_{\mathfrak{S}_{n-1}}^{\mathfrak{S}_n} V_\nu = \bigoplus_{\lambda,\mu:|\lambda|+|\mu|=n} V_{(\lambda,\mu)}^{\oplus d_{\lambda \mu}^\nu},
\end{equation}
where $d_{\lambda \mu}^\nu$ is defined in \eqref{eq:dlambdamunu}.

In the next section we describe a situation that does not occur in general for any $G$ and $H$, but that is typical of towers of groups, like it is the case for the symmetric groups.

\subsection{Branching rules} \label{sec:branching}

Consider one of the irreducible $\mathfrak{S}_{n-1}$-global $\mathfrak{S}_n$-partial representations $V_{(\lambda,\mu)}$, where $\lambda\vdash k$ and $\mu\vdash n-k$ and where $k\geq 1$. Consider the subgroup $\mathfrak{S}_{n-1}':=\mathfrak{S}_n^{\{n\}}\equiv \mathfrak{S}_{n-1}\times \mathfrak{S}_1\subset \mathfrak{S}_n$. We want to describe the decomposition of the $\mathfrak{S}_{n-1}'$-partial representation $\mathsf{Res}_{\mathfrak{S}_{n-1}'}^{\mathfrak{S}_n}V_{(\lambda,\mu)}$ as a sum of irreducibles. 

To lighten the notation, we let $G\coloneqq \mathfrak{S}_{n}$, $H\coloneqq \mathfrak{S}_{n-1}$, $G'\coloneqq \mathfrak{S}_{n-1}'$, $H'\coloneqq G'\cap H=\mathfrak{S}_{n-2}'$, $K\coloneqq \mathfrak{S}_{k}\times \mathfrak{S}_{n-k}\subseteq G$ and $K'\coloneqq G'\cap K \equiv\mathfrak{S}_{k}\times \mathfrak{S}_{n-k-1}$.

The case $k=n$ corresponds to $\mathfrak{S}_n$-global representations, so it gives the well-known branching rule
\[ \mathsf{Res}_{\mathfrak{S}_{n-1}'}^{\mathfrak{S}_n}V_{(\lambda,\varnothing)}\cong \bigoplus_{\lambda^1\to\lambda}V_{(\lambda^1,\varnothing)}. \]

The case $k=1$ corresponds to the $\Sym_{n-1}$-global $\mathfrak{S}_n$-partial representations where any $\sigma\in \mathfrak{S}_n\setminus \Sym_{n-1}$ acts as $0$ (see Remark \ref{rem:lev1trivrep}).

It is easy to see that
\[ \mathsf{Res}_{\mathfrak{S}_{n-1}'}^{\mathfrak{S}_n}V_{((1),\mu)} = \mathsf{Res}_{\mathfrak{S}_{n-1}'}^{\mathfrak{S}_n}(\Ind_{[1]}(V_{(1)}\otimes V_\mu))\cong \bigoplus_{\mu^1\to \mu} \Ind_{[1]}(V_{(1)}\otimes V_{\mu^1})= \bigoplus_{\mu^1\to \mu} V_{((1),\mu^1)}.\]

From now on we assume $n>k> 1$.
By setting $\mathfrak{S}_{n-2}' \coloneqq \mathfrak{S}_{n-1}'\cap \mathfrak{S}_{n-1}=\mathfrak{S}_{n}^{\{1\}}\cap \mathfrak{S}_{n}^{\{n\}}$, it is clear that these will be actually irreducible $\mathfrak{S}_{n-2}'$-global $\mathfrak{S}_{n-1}'$-partial representations. 

We already identified in Example~\ref{ex:SkSnk_action} the action of $G$ on $X\coloneqq G/K$ by left multiplication as the action on $\binom{[n]}{k}$. Under this identification, there are two $G'$-orbits in $X$: the set $X'\coloneqq G'K/K$ of $k$-sets not containing $n$ and the set $\widetilde{X}\coloneqq G'(2,n)K/K$ of $k$-sets containing $n$.

In our identification (i.e.\ $\sigma\Sym_n^{[k]} \leftrightarrow \sigma([k])$), the set $Y\coloneqq HK/K\subset X=\binom{[n]}{k}$ corresponds to 
\[
Y= \left\{\sigma\Sym_n^{[k]}\mid \sigma \in \Sym_n^{[1]}\right\}= \left\{\left.A\in \binom{[n]}{k} ~\right|~ 1\in A \right\}
\] 
so that 
\[Y\cap X'=H'K/K=\left\{\left.A\in \binom{[n]}{k} ~\right|~ 1\in A,n\notin A  \right\}\] and \[Y\cap \widetilde{X}=H'(2,n)K/K=\left\{\left.A\in \binom{[n]}{k} ~\right|~ 1\in A,n\in A \right\}.\]

So if $(W,\rho)$ is an irreducible representation of $K$, then the $H$-global $G$-partial representation
\[\Ind_{KH}W\cong \bigoplus_{gK \in HK/K}W^{gK}\] decomposes, as $G'$-partial representation, as the direct sum of two subrepresentations
\begin{equation}\label{eq:twosubmodules}  
\bigoplus_{gK \in H'K/K}W^{gK}\oplus \bigoplus_{gK \in H'(2,n)K/K}W^{gK}.
\end{equation}
We begin by considering the restriction to $G'$ of the representation on the left in \eqref{eq:twosubmodules}. The assignment
\[ \varphi\colon H'K/K \to H'K'/K', \qquad gK \mapsto gK',  \]
is a well-defined isomorphism of $G'$-partial actions, so that we have an isomorphism of $G'$-partial representations
\[\bigoplus_{gK \in H'K/K}W^{gK}\cong \bigoplus_{gK' \in H'K'/K'}W^{gK'}=\Ind_{K'H'}(\Res_{K'}^K(W)). \]
Applying this to $W=V_\lambda\otimes V_\mu$ gives
\[ \bigoplus_{gK \in H'K/K}W^{gK}\cong \bigoplus_{\mu^1\to\mu}\Ind_{K'H'}(V_\lambda\otimes V_{\mu^1})= \bigoplus_{\mu^1\to\mu} V_{(\lambda,\mu^1)}. \]

Concerning the representation on the right in \eqref{eq:twosubmodules}, the argument is similar: the map
\[ \varphi'\colon H'(2,n)K/K \to H' K_{(2,n)}' /K_{(2,n)}', \qquad g(2,n)K \mapsto gK_{(2,n)}', \]
(where we recall that $K_{(2,n)}'\coloneqq (2,n) K' (2,n)$ as in Notation~\ref{notation}) is an isomorphism of $G'$-partial actions, so that we have a $G'$-isomorphism
\[\bigoplus_{g(2,n)K \in H'(2,n)K/K}W^{g(2,n)K}\cong \bigoplus_{gK' \in H'K_{(2,n)}'/K_{(2,n)}'}W_{(2,n)}^{gK_{(2,n)}'}=\Ind_{K_{(2,n)}'H'}
(\Res_{K_{(2,n)}'}^{K_{(2,n)}}W_{(2,n)}),
\]
where $W_{(2,n)}$ denotes the representation $\rho^{(2,n)}\colon K_{(2,n)} \to \GL(W),\, x \mapsto \rho\big((2,n)x(2,n)\big)$, again as in Notation~\ref{notation}. Applying this to $W=V_\lambda\otimes V_\mu$ gives
\[ \Res_{K_{(2,n)}'}^{K_{(2,n)}}W_{(2,n)}\cong \Res_{\Sym_{k-1}\times \Sym_{n-k}}^{\Sym_{k}\times \Sym_{n-k}} (V_\lambda\otimes V_\mu) \cong\bigoplus_{\lambda^1\to \lambda} V_{\lambda^1}\otimes V_\mu\]
so that
\[ \bigoplus_{g(2,n)K \in H'(2,n)K/K}W^{g(2,n)K}\cong \bigoplus_{\lambda^1\to \lambda} \Ind_{K_{(2,n)}'H'}(V_{\lambda^1}\otimes V_\mu) = \bigoplus_{\lambda^1\to \lambda}V_{(\lambda^1,\mu)}. \]
Finally we get the branching rule
\begin{equation} \label{eq:branchingrule}
\mathsf{Res}_{\mathfrak{S}_{n-1}'}^{\mathfrak{S}_n}V_{\lambda,\mu} \cong \bigoplus_{\lambda^1\to \lambda}V_{(\lambda^1,\mu)} \oplus \bigoplus_{\mu^1\to\mu} V_{(\lambda,\mu^1)} . 
\end{equation}

\section{Comments and future directions} \label{sec:comments}

One thing that we did not discuss here is the character theory. In fact there are general character theories (over $\C$), one for finite groupoids \cite{Ibort_Rodriguez-Book}, which can be applied to $\Gamma_H(G)$, and one for inverse semigroups \cite{Steinberg-Moebius1,Steinberg-Moebius2}, which can be applied to $S_H(G)$. Because of the isomorphism of algebras $\C\Gamma_H(G)\cong \C S_H(G)$, of course they will give the same results. In particular, the characters of $\C\Gamma_H(G)\cong \C S_H(G)$ can be recovered from the characters of the groups $K_{i,j}$ occurring in Theorem~\ref{thm:algebra_decomposition}.

\medskip

About future directions, there are several questions that arise naturally from this work. 

First of all, one can look at examples other than $G=\Sym_n$ and $H=\Sym_{n-1}\equiv \Sym_1\times \Sym_{n-1}$. It should be noted that already the cases $G=\Sym_n$ and $H=\Sym_{n-2}\equiv\Sym_1\times\Sym_1\times\Sym_{n-2}$ or $G=\Sym_n$ and $H=\Sym_{2}\times \Sym_{n-2}$ seem to be too complicated to compute explicitly. On the other hand, for example, it would be interesting to look into other Coxeter groups.

More generally, it would be interesting to see what properties of $G$ are determined by the $H$-global $G$-partial representations when $H$ is a characteristic subgroup of $G$, e.g.\ the derived group or the center of $G$. For example, it follows from \cite[Theorem 4.4]{Dokuchaev-Exel-Piccione} that the isomorphism class of $G/G'$ is determined by the partial representations of $G/G'$, where $G'$ is the derived subgroup of $G$. What can we say about $G$ by knowing the $G'$-global $G$-partial representations?

At a more speculative level, it would be interesting to see how the $H$-global $G$-partial representations are related to the Hecke algebra $\End_G (\C[G/H])\equiv \C[G]^{H\times H}$, especially in the case when $(G,H)$ is a Gelfand pair.

\medskip

Also, as the definition of an $H$-global $G$-partial representation makes sense also in infinite contexts, it would be interesting to look into infinite situations, like for example infinite compact groups, or Lie groups.

Even more generally, one could look to the case Hopf algebras, where similar notions can be defined, and they would be interesting to study. For example fixing a Hopf subalgebra $H$ of a given Hopf algebra, this in same cases might produce computable and interesting $H$-global partial representations.

\appendix
\section{Irreducibles of semisimple algebras}
\label{appendix:A}

In this appendix we outline how the representation theory of a finite-dimensional associative semisimple unital algebra $A$ gets recovered from the representation theory of the algebras $eAe$ for the idempotents $e\in A$. Along the way, we sketch a proof of Theorem~\ref{thm:reps_eAe}.

\medskip 

Let $A$ be a finite-dimensional associative semisimple unital algebra over $\mathbb{C}$, with $A\not\cong \mathbb{C}$. So, $A$ is the direct sum of matrix algebras by Wedderburn theory. Let $e\in A$ be a nontrivial idempotent of $A$, i.e.\ $0\neq e\neq 1$ (such an $e$ does exist since $A\not\cong \mathbb{C}$).

Given an $eAe$-module $W$ we define the $A$-module $\mathsf{Ind}_eW$ by setting
\[ \mathsf{Ind}_eW\coloneqq Ae\otimes_{eAe}W.  \]
Viceversa, given an $A$-module $V$, we define the $eAe$-module $\mathsf{Res}_eV$ by setting
\[ \mathsf{Res}_eV\coloneqq eV.  \]

Observe that for any $eAe$-module $W$ we have natural isomorphisms
\[  \mathsf{Res}_e( \mathsf{Ind}_eW )=e(Ae\otimes_{eAe}W)\cong eAe\otimes_{eAe}W \cong W. \]
Moreover
\[ Ae\, \mathsf{Ind}_eW= Ae(Ae\otimes_{eAe}W)=Ae(eAe)\otimes_{eAe}W =Ae\otimes_{eAe}W =\mathsf{Ind}_eW.\]
\begin{proposition}
	If $V$ is an $A$-module such that $\mathsf{Res}_eV=eV$ is an irreducible $eAe$-module and $AeV=V$, then $V$ is irreducible.
\end{proposition}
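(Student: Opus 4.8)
The plan is to show that any nonzero $A$-submodule $V'\subseteq V$ must equal $V$. First I would note that since $eV$ is irreducible as an $eAe$-module, $eV'$ is an $eAe$-submodule of $eV$, so either $eV'=0$ or $eV'=eV$. The strategy is to rule out $eV'=0$ for $V'\neq 0$, and then to bootstrap from $eV'=eV$ back up to $V'=V$ using the hypothesis $AeV=V$.

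For the second step, suppose $eV'=eV$. Then $AeV'\supseteq Ae(eV')=Ae(eV)\supseteq AeV=V$, where I have used that $eV\supseteq e(eV)=eV$ trivially and that $AeV=V$ by hypothesis; more carefully, $AeV' = AeV$ since $eV'=eV$ forces $AeV'=A(eV')=A(eV)=AeV=V$, and since $V'$ is a submodule $AeV'\subseteq V'$, giving $V=AeV'\subseteq V'\subseteq V$, hence $V'=V$. So it remains to handle the case $eV'=0$: I want to show this is impossible for $V'\neq 0$. Here I would invoke semisimplicity of $A$: write $V=V'\oplus V''$ as $A$-modules for some complement $V''$. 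Then $eV=eV'\oplus eV''=eV''$, so $eV''=eV$, and by the argument just given (applied to $V''$) we get $V''=V$, forcing $V'=0$, a contradiction. Thus $eV'\neq 0$, so $eV'=eV$, and we are back in the previous case.

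Putting it together: for any nonzero submodule $V'$ we get $eV'=eV$ (using semisimplicity to exclude $eV'=0$), and then $V'=AeV'=AeV=V$ (using $AeV=V$). Hence $V$ has no proper nonzero submodules, i.e.\ $V$ is irreducible.

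The main obstacle, or rather the point requiring care, is the step $eV'=0\Rightarrow V'=0$: one must not try to prove it by a direct module-theoretic manipulation (it is false without semisimplicity, e.g.\ for a module with a submodule killed by $e$), but instead use the Maschke-type splitting $V=V'\oplus V''$ that semisimplicity of $A$ provides, and then feed the complement $V''$ through the same bootstrap argument. Everything else is a short chain of equalities relying only on $e^2=e$ and the hypothesis $AeV=V$.
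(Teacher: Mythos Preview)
Your proof is correct and uses essentially the same idea as the paper: the key step in both is that if a submodule $W\subseteq V$ satisfies $eW=eV$, then $V=AeV=AeW\subseteq W$, and semisimplicity is invoked to reduce to this case via a direct-sum complement. The only cosmetic difference is that the paper argues indecomposability directly (starting from $V=V_1\oplus V_2$), whereas you argue irreducibility and bring in the complement only to dispose of the $eV'=0$ case.
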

\begin{proof}
	If $V=V_1\oplus V_2$ as $A$-modules, then $eV=eV_1\oplus eV_2$; but $eV$ is irreducible, so without loss of generality $eV_2=0$. Now
	\[V=AeV=AeV_1\oplus AeV_2=AeV_1\subseteq V_1, \] which implies $V=V_1$. So $V$ is irreducible.
\end{proof}
Putting together the previous observation we get the following corollary, which is part of Theorem~\ref{thm:reps_eAe}.
\begin{corollary}\label{cor:A2}
	If $W$ is an irreducible $eAe$-module, then $\mathsf{Ind}_eW$ is irreducible.
\end{corollary}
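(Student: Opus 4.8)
The plan is to deduce the corollary immediately from the Proposition proved just above it, applied to $V\coloneqq \mathsf{Ind}_eW$. So first I would record the two hypotheses of that Proposition in this special case. For the first, $\mathsf{Res}_eV=eV=e\left(Ae\otimes_{eAe}W\right)$, and using the natural isomorphism $eAe\otimes_{eAe}W\cong W$ of $eAe$-modules already observed in the excerpt, this gives $\mathsf{Res}_e\mathsf{Ind}_eW\cong W$ as $eAe$-modules; since $W$ is irreducible (hence nonzero), $eV$ is an irreducible $eAe$-module. For the second, I would invoke the identity $Ae\,\mathsf{Ind}_eW=\mathsf{Ind}_eW$ established right before the Proposition, which is exactly $AeV=V$.

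With both hypotheses in hand, the Proposition yields that $V=\mathsf{Ind}_eW$ is an irreducible $A$-module, which is the claim. I would keep the argument to two or three lines, just citing the isomorphism $\mathsf{Res}_e\mathsf{Ind}_eW\cong W$, the equality $Ae\,\mathsf{Ind}_eW=\mathsf{Ind}_eW$, and the Proposition.

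There is essentially no obstacle here: the real content was already isolated in the Proposition and in the two natural identities preceding it, so this corollary is a formal consequence. The only point worth a moment's care is that one must know $eV\neq 0$ in order to speak of it being irreducible; this is automatic because $eV\cong W$ and $W$, being irreducible, is nonzero by convention. This completes the proof of Theorem~\ref{thm:reps_eAe} in the direction asserting that $\mathsf{Ind}_eW$ is irreducible whenever $W$ is; the converse direction (every irreducible $A$-module arises this way for a suitable nontrivial idempotent $e$) would be treated separately by choosing $e$ to be a primitive idempotent not acting as the identity, but that is outside the scope of the present statement.
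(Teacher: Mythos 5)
Your proof is correct and follows exactly the route the paper intends: the corollary is obtained by applying the preceding Proposition to $V=\mathsf{Ind}_eW$, with the two hypotheses supplied by the displayed identities $\mathsf{Res}_e(\mathsf{Ind}_eW)\cong W$ and $Ae\,\mathsf{Ind}_eW=\mathsf{Ind}_eW$. The paper leaves this as "putting together the previous observations," so your write-up simply makes that explicit; nothing is missing.
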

\begin{proposition}\label{prop:A3}
	If $V$ is an irreducible $A$-module and $eV\neq 0$, then $eV$ is an irreducible $eAe$-module.
\end{proposition}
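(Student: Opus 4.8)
The plan is to prove irreducibility of $eV$ as an $eAe$-module directly, by showing it has no nonzero proper submodule. So I would fix an arbitrary nonzero $eAe$-submodule $W\subseteq eV$ and prove that $W=eV$; since $eV\neq 0$ by hypothesis (so that $eV$ is a legitimate nonzero module), this is exactly what is needed for Proposition~\ref{prop:A3}.

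The argument rests on two simple observations. First, because $W\subseteq eV$ and $e$ is idempotent, every $w\in W$ satisfies $ew=w$, hence $W=eW$. Second, consider the $A$-submodule $AW\subseteq V$: it is nonzero because it contains $W\neq 0$, so $AW=V$ by irreducibility of $V$. Applying $e$ on the left and using $W=eW$ yields
\[
eV = e(AW) = eA(eW) = (eAe)W \subseteq W,
\]
where the last inclusion holds because $W$ is stable under $eAe$. Together with $W\subseteq eV$ this forces $W=eV$, which completes the proof.

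I do not expect a genuine obstacle here: the only point needing a moment of care is the identity $eA(eW)=(eAe)W$, which is immediate once one writes a typical element as $e\,a\,(e\,w)=(eae)w$; note also that semisimplicity of $A$ plays no role in this implication, only the irreducibility of $V$. For completeness I would then close the loop with Theorem~\ref{thm:reps_eAe}: if $V$ is irreducible with $eV\neq 0$, then $AeV$ is a nonzero $A$-submodule of $V$, hence $AeV=V$, so the canonical $A$-linear map $\mathsf{Ind}_e(eV)=Ae\otimes_{eAe}eV\to V$, $ae\otimes ev\mapsto aev$, is surjective; since $eV$ is irreducible by Proposition~\ref{prop:A3}, Corollary~\ref{cor:A2} makes the source irreducible, so this surjection is an isomorphism, and conversely $\mathsf{Res}_e(\mathsf{Ind}_eW)\cong W$ was already noted, giving the stated bijection between irreducibles.
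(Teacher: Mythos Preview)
Your proof is correct and follows essentially the same approach as the paper: the paper picks a nonzero element $v\in eV$ and computes $eAe\,v = eA\,v = eV$ (using $ev=v$ and $Av=V$), which is the element-level version of your submodule computation $(eAe)W = eA(eW) = eV$. The extra closing paragraph about Theorem~\ref{thm:reps_eAe} is accurate but goes beyond what Proposition~\ref{prop:A3} asks.
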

\begin{proof} 
	For any $0\neq v\in eV$ we have
	\[eAev=eAv=eV\]
	as $Av=V$ since $V$ is irreducible. 
\end{proof}
From the previous results we can easily deduce the following theorem, which is the remaining part of Theorem~\ref{thm:reps_eAe}.
\begin{theorem}
	Every irreducible $A$-module $V$ is isomorphic to $\mathsf{Ind}_eW$ for some nontrivial idempotent $e\in A$ and some irreducible $eAe$-module $W$.
\end{theorem}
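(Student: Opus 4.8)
The plan is to produce, for a given irreducible $A$-module $V$, a nontrivial idempotent $e\in A$ with $eV\neq 0$, and then to identify $V$ with $\mathsf{Ind}_e(eV)$ via the obvious evaluation map.

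First I would use the Wedderburn decomposition $A\cong\prod_{i}M_{n_i}(\mathbb{C})$: the irreducible $A$-module $V$ is, up to isomorphism, the simple module of exactly one factor, say $M_{n_{i_0}}(\mathbb{C})$, on which the remaining factors act as zero. Choose the matrix unit $E_{11}$ in that factor and let $e\in A$ be the corresponding element, which is clearly idempotent. Then $eV\neq 0$, since $E_{11}$ acts as a nonzero projection on the column space $\mathbb{C}^{n_{i_0}}$ underlying $V$. Moreover $e$ is nontrivial: we have $e\neq 0$, and if $e=1$ then $A$ would consist of a single Wedderburn factor equal to $M_1(\mathbb{C})=\mathbb{C}$, contradicting the standing hypothesis $A\not\cong\mathbb{C}$. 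By Proposition~\ref{prop:A3}, the space $W\coloneqq eV$ is an irreducible $eAe$-module.

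Next I would consider the evaluation map $\theta\colon \mathsf{Ind}_eW=Ae\otimes_{eAe}eV\to V$ given by $ae\otimes ev\mapsto aev$, which is well defined and $A$-linear. Its image is $AeV$, a nonzero $A$-submodule of $V$ (nonzero because it contains $eV\neq 0$), hence equal to $V$ by irreducibility; thus $\theta$ is surjective. By Corollary~\ref{cor:A2}, $\mathsf{Ind}_eW$ is irreducible, so $\ker\theta$ is a proper submodule of an irreducible module and therefore zero. Hence $\theta$ is an isomorphism and $V\cong\mathsf{Ind}_eW$, as desired.

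The only step requiring genuine care is the choice of $e$: one must arrange simultaneously that $e$ is a \emph{nontrivial} idempotent of $A$ (this is exactly where $A\not\cong\mathbb{C}$ is used) and that $eV\neq 0$ (which forces $e$ to live in the Wedderburn factor indexing $V$). Once $e$ is in hand, everything is formal, relying only on the already-established facts that $\mathsf{Ind}_e$ sends irreducibles to irreducibles and that $eV$ is an irreducible $eAe$-module whenever it is nonzero.
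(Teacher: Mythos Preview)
Your proof is correct and follows essentially the same approach as the paper: find a nontrivial idempotent $e$ with $eV\neq 0$, invoke Proposition~\ref{prop:A3} and Corollary~\ref{cor:A2}, and conclude via the evaluation map $Ae\otimes_{eAe}eV\to V$. The only cosmetic difference is that the paper locates $e$ abstractly (writing $1$ as a sum of nontrivial idempotents, one of which must act nontrivially) rather than via an explicit matrix unit, and then cites Schur's lemma directly instead of separately checking surjectivity and injectivity.
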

\begin{proof}
	Let $V$ be an irreducible $A$-module. Since $A$ is semisimple, there exists a nontrivial idempotent $e\in A$ such that $eV\neq 0$ (otherwise $1$, which is a sum of nontrivial idempotents, acts as $0$). Then $eV$ is an irreducible $eAe$-module by Proposition \ref{prop:A3} and $\mathsf{Ind}_e(eV)$ is an irreducible $A$-module by Corollary \ref{cor:A2}. So we have a map of $A$-modules
	\[ \mathsf{Ind}_e(eV)=Ae\otimes_{eAe} eV\to V,\quad ae\otimes ev\mapsto aev,  \]
	which is clearly nonzero and hence an isomorphism by Schur's lemma.
\end{proof}

\bibliographystyle{amsalpha}
\bibliography{Biblebib}

\providecommand{\bysame}{\leavevmode\hbox to3em{\hrulefill}\thinspace}
\providecommand{\MR}{\relax\ifhmode\unskip\space\fi MR }
\providecommand{\MRhref}[2]{%
  \href{http://www.ams.org/mathscinet-getitem?mr=#1}{#2}
}
\providecommand{\href}[2]{#2}
\begin{thebibliography}{{Mac}87}

\bibitem[Aba03]{Abadie-globalization-2003}
F.~Abadie, \emph{Enveloping actions and {T}akai duality for partial actions},
  J. Funct. Anal. \textbf{197} (2003), no.~1, 14--67. \MR{1957674}

\bibitem[Aba18]{Abadie_dilations}
Fernando Abadie, \emph{Dilations of interaction groups that extend actions of
  {O}re semigroups}, J. Aust. Math. Soc. \textbf{104} (2018), no.~2, 145--161.
  \MR{3772668}

\bibitem[ABV15]{ParRep}
Marcelo Muniz~S. {Alves}, Eliezer {Batista}, and Joost {Vercruysse},
  \emph{{Partial representations of Hopf algebras.}}, {J. Algebra} \textbf{426}
  (2015), 137--187 (English).

\bibitem[ABV19]{Alves_Batista_Vercruysse-dilations}
\bysame, \emph{{Dilations of partial representations of Hopf algebras.}}, {J.
  Lond. Math. Soc., II. Ser.} \textbf{100} (2019), no.~1, 273--300 (English).

\bibitem[Bat17]{Batista-Survey-2017}
E.~Batista, \emph{Partial actions: what they are and why we care}, Bull. Belg.
  Math. Soc. Simon Stevin \textbf{24} (2017), no.~1, 35--71. \MR{3625785}

\bibitem[DE05]{Dokuchaev-Exel-TAMS-2005}
M.~Dokuchaev and R.~Exel, \emph{Associativity of crossed products by partial
  actions, enveloping actions and partial representations}, Trans. Amer. Math.
  Soc. \textbf{357} (2005), no.~5, 1931--1952. \MR{2115083}

\bibitem[DEP00]{Dokuchaev-Exel-Piccione}
M.~Dokuchaev, R.~Exel, and P.~Piccione, \emph{Partial representations and
  partial group algebras}, J. Algebra \textbf{226} (2000), no.~1, 505--532.
  \MR{1749902}

\bibitem[Dok19]{Dokuchaev-Survey-2019}
M.~Dokuchaev, \emph{Recent developments around partial actions}, S\~{a}o Paulo
  J. Math. Sci. \textbf{13} (2019), no.~1, 195--247. \MR{3947402}

\bibitem[Exe94]{Exel-CircleActions-1994}
R.~Exel, \emph{Circle actions on {$C^*$}-algebras, partial automorphisms, and a
  generalized {P}imsner-{V}oiculescu exact sequence}, J. Funct. Anal.
  \textbf{122} (1994), no.~2, 361--401. \MR{1276163}

\bibitem[Exe98]{Exel-semigroup-PAMS}
Ruy Exel, \emph{Partial actions of groups and actions of inverse semigroups},
  Proc. Amer. Math. Soc. \textbf{126} (1998), no.~12, 3481--3494. \MR{1469405}

\bibitem[{Exe}17]{Exel-book-2017}
Ruy {Exel}, \emph{{Partial dynamical systems, Fell bundles and applications.}},
  vol. 224, Providence, RI: American Mathematical Society (AMS), 2017
  (English).

\bibitem[IR]{Ibort_Rodriguez-Book}
Alberto Ibort and Miguel Rodriguez, \emph{An introduction to groups, groupoids
  and their representations}, CRC Press.

\bibitem[KL04]{KellenLawson}
J.~{Kellendonk} and Mark~V. {Lawson}, \emph{{Partial actions of groups.}},
  {Int. J. Algebra Comput.} \textbf{14} (2004), no.~1, 87--114 (English).

\bibitem[{Law}98]{Lawson}
Mark {Lawson}, \emph{{Inverse semigroups. The theory of partial symmetries.}},
  Singapore: World Scientific, 1998 (English).

\bibitem[{Mac}87]{Mackenzie}
K.~{Mackenzie}, \emph{{Lie groupoids and Lie algebroids in differential
  geometry.}}, vol. 124, Cambridge University Press, Cambridge. London
  Mathematical Society, London, 1987 (English).

\bibitem[Ser77]{SerreBook}
Jean-Pierre Serre, \emph{Linear representations of finite groups},
  Springer-Verlag, New York-Heidelberg, 1977, Translated from the second French
  edition by Leonard L. Scott, Graduate Texts in Mathematics, Vol. 42.
  \MR{0450380}

\bibitem[Sta99]{Stanley-Book-1999}
Richard~P. Stanley, \emph{Enumerative combinatorics. {V}ol. 2}, Cambridge
  Studies in Advanced Mathematics, vol.~62, Cambridge University Press,
  Cambridge, 1999, With a foreword by Gian-Carlo Rota and appendix 1 by Sergey
  Fomin. \MR{1676282}

\bibitem[Ste06]{Steinberg-Moebius1}
Benjamin Steinberg, \emph{M\"{o}bius functions and semigroup representation
  theory}, J. Combin. Theory Ser. A \textbf{113} (2006), no.~5, 866--881.
  \MR{2231092}

\bibitem[Ste08]{Steinberg-Moebius2}
\bysame, \emph{M\"{o}bius functions and semigroup representation theory. {II}.
  {C}haracter formulas and multiplicities}, Adv. Math. \textbf{217} (2008),
  no.~4, 1521--1557. \MR{2382734}

\bibitem[Ste16]{Steinberg-Book}
\bysame, \emph{Representation theory of finite monoids}, Universitext,
  Springer, Cham, 2016. \MR{3525092}

\bibitem[SV20]{Saracco_Vercruysse-Globalization}
Paolo Saracco and Joost Vercruysse, \emph{Globalization for geometric partial
  comodules}, 2020.

\end{thebibliography}

\end{document}